   \edef\Gin@extensions{\Gin@extensions,.mps}
   \edef\Gin@extensions{\Gin@extensions,.mps}
\newtheorem{theorem}{Theorem}[section]
\newtheorem{lemma}[theorem]{Lemma}
\newtheorem{corollary}[theorem]{Corollary}
\newtheorem{proposition}[theorem]{Proposition}
\theoremstyle{definition}
\newtheorem{definition}[theorem]{Definition}
\newtheorem{newdescription}[theorem]{Description}
\newtheorem{example}[theorem]{Example}
\newtheorem{notation}[theorem]{Notation}
\newtheorem{property}[theorem]{Property}
\newtheorem{algorithm}{\sc Algorithm}
\theoremstyle{remark}
\newtheorem{remark}[theorem]{Remark}
\definecolor{DarkBlue}{rgb}{0,0.1,0.55}
\definecolor{DarkGrey}{rgb}{0.2,0.2,0.2}
\numberwithin{equation}{section}
\newcommand{\hide}[1]{}
\newcommand{\defeq}{\;{\stackrel{\text{\tiny def}}{=}}\;}
\newcommand{\eop}{\hfill $\Box$}
\def\C {\ensuremath{\mathrm{C}}}
\def\CC {\ensuremath{\mathcal{C}}}
\def\R {\ensuremath{\mathrm{R}}}
\def\eps {\ensuremath{\varepsilon}}
\def\la {\ensuremath{\langle}}
\def\ra {\ensuremath{\rangle}}
\def\Ext{\ensuremath{{\rm Ext}}}
\def\ZZ{\ensuremath{{\rm Zer}}}
\def\RM{\ensuremath{{\rm RM}}}
\def\BGRM{\ensuremath{{\rm BGRM}}}
\def\Cr{\ensuremath{{\rm Cr}}}
\def\D{\ensuremath{{\rm D}}}
\def\Def{\ensuremath{{\rm Def}}}
\def\y {\ensuremath{\mathbf{y}}}
\begin{document}
\title[A baby step-giant step roadmap algorithm]
{A baby step-giant step roadmap algorithm for general algebraic sets}
\author{S. Basu}
\address{Department of Mathematics,
Purdue University, West Lafayette, IN 47907, USA}
\email{sbasu@math.purdue.edu}

\author{M-F. Roy}
\address{IRMAR (URA CNRS 305)
Universit\'e de Rennes 1
Campus de Beaulieu
35042 Rennes, cedex
France}
\email{marie-francoise.roy@univ-rennes1.fr}

\author{M. Safey El Din}
\address{
Sorbonne Universit\'es, UPMC, Univ. Paris 06, UMR CNRS 7606, LIP6\\
INRIA Paris-Rocquencourt Center PolSys Pro\-ject\\
Institut Universitaire de France, France}
\email{Mohab.Safey@lip6.fr}

\author{\'E. Schost}
\address{Computer Science Department, The University of Western Ontario, London, ON, Canada}
\email{eschost@uwo.ca}

\thanks{Communicated by Teresa Krick.}
\thanks{The first author was supported in part by NSF grants
  CCF-0915954,  CCF-1319080 and DMS-1161629. The first and the second authors did part of the work
  during a research stay in Oberwolfach as part of the Research in
  Pairs Programme. The third author is member of Institut
  Universitaire de France and supported by the French National
  Research Agency EXACTA grant (ANR-09-BLAN-0371-01) and GeoLMI grant
  (ANR-2011-BS03-011-06). The fourth author was supported by an NSERC
  Discovery Grant and by the Canada Research Chairs program.}

\subjclass[2010]{Primary 14Q20; Secondary 14P05, 68W05.}  

\keywords{Roadmaps, Real algebraic variety, Baby step-giant step}
\begin{abstract}
  Let $\R$ be a real closed field and $\D \subset \R$ an ordered domain.  We
  give an algorithm that takes as input a polynomial $Q \in
  \D[X_1,\ldots,X_k]$, and computes a description of a roadmap of the
  set of zeros, $\ZZ(Q,\R^k)$, of $Q$ in $\R^k$.  The complexity of
  the algorithm, measured by the number of arithmetic operations in the 
  ordered
  domain $\D$, is bounded by 
  $d^{O(k \sqrt{k})}$, where $d = \deg(Q)\ge 2$.
  As a consequence, there exist algorithms for 
computing the number of 
 semi-algebraically connected components
of a real algebraic set, $\ZZ(Q,\R^k)$,
whose complexity is also   bounded by $d^{O(k \sqrt{k})}$,
where $d = \deg(Q)\ge 2$.
The best previously known 
algorithm for constructing a roadmap of a real algebraic
subset of $\R^k$ defined by a polynomial of degree $d$ 
has
complexity $d^{O(k^2)}$.
\end{abstract}

\maketitle
\section{Introduction}
\label{sec:intro}

The problem of designing efficient algorithms for deciding whether two
points belong to the same semi-algebraically connected component of a
 semi-algebraic set, as well as counting the number of
 semi-algebraically connected components of a given semi-algebraic set
$S \subset \R^k$ where $\R$ is a real closed field (for example the
field of real numbers), is a very important problem in algorithmic
semi-algebraic geometry.  

The first algorithm for solving this problem
\cite{SS} was based on the technique of cylindrical algebraic
decomposition \cite{Col,BPRbook2}, and consequently had doubly
exponential complexity in $k$. 

Algorithms with singly exponential complexity in $k$
for solving this problem was first introduced by Canny in
\cite{Canny87}, and then successively completed and refined in
\cite{GV90,GHRSV90,HRS90,HRS93,GR92,GV92,BPR99}.  They are all based
on a geometric idea introduced by Canny, the construction of a
one-dimensional s-a (i.e. semi-algebraic) subset of the given s-a set
$S$, called a {\em roadmap of $S$}, which has the property that it is
non-empty and s-a connected inside every s-a (i.e. semi-algebraically)
connected component of $S$.

In the papers mentioned above, the construction of a roadmap of a
s-a set $S$ depends on recursive calls to itself on several
(in fact, singly exponentially many) $(k-1)$-dimensional slices of
$S$, each obtained by fixing the first coordinate. For constructing
the roadmap of a real algebraic variety defined by a polynomial $Q
\in \R[X_1,\ldots,X_k]$ with $\deg(Q) \leq d$, this technique gave
an algorithm with complexity $d^{O(k^2)}$. The exponent in 
the upper bound on the complexity,
$O(k^2)$, is due to the fact that the depth of the
recursion in these algorithms could be as large as $k$.  This exponent
is not satisfactory since the total number of s-a
connected components is 
$(O(d))^k$
and so there is room for trying to
improve it.  However, this has turned out to be a rather difficult
problem with no progress until very recently.

A new construction for computing roadmaps, with an improved recursive
scheme of baby step - giant step type, has been proposed, and applied
successfully in the case of smooth bounded real
algebraic hypersurfaces in \cite{Mohab-Schost2010}. In this new
recursive scheme, the dimension drops by $\sqrt{k}$ in each recursive
call. As a result, the depth of the recursive calls in this new
algorithm is at most $\sqrt{k}$, and consequently the algorithm has a
complexity of $d^{O(k \sqrt{k})}$.  The proof of correctness of the
algorithm in \cite{Mohab-Schost2010} depends on certain results from
commutative algebra and complex algebraic geometry, in order to prove
smoothness of polar varieties corresponding to generic projections of
a non-singular hypersurface.  Choosing generic coordinates in the
algorithm is necessary since the non-singularity of polar varieties
does not hold for all projections, but only for a Zariski-dense set of
projections.  This is an important restriction, since there is no
known method for making such a choice of generic coordinates
deterministically within this improved complexity bound. As a result,
the authors obtain a randomized (rather than a deterministic)
algorithm for computing roadmaps: there might be cases where the
algorithm terminates and gives a wrong result.

In contrast to these techniques which depend on complex algebraic
geometry, the algorithm for constructing roadmaps described in
\cite{BPRbook2} depends 
mostly
on arguments which are semi-algebraic in
nature. The greater flexibility of semi-algebraic geometry (as opposed
to complex geometry) makes it possible to avoid genericity
requirements for coordinates.  More precisely, we apply the technique used in
\cite{BPRbook2} to make an infinitesimal deformation of the given
variety so that the original coordinates are good. Since the
infinitesimal deformation uses only one
infinitesimal, it does not affect the asymptotic complexity class of
the algorithm.

The goal of this paper is to obtain a {\em deterministic algorithm}
for computing the roadmap of a {\em general algebraic set}, combining a
baby step - giant step 
recursive scheme similar to that used in
\cite{Mohab-Schost2010} and extending techniques coming from \cite{BPRbook2}.

\smallskip
We start by recalling the precise definition of what is meant by a roadmap.

\begin{definition}
Let $S \subset \R^k$ be a s-a set.  A {\em roadmap} for $S$
is a s-a set $\RM(S)$ of dimension at most one contained in
$S$ which satisfies the following roadmap conditions:
\begin{enumerate}
\item $\RM_1$ For every s-a connected component $C$ of
  $S$, $C \cap \RM(S)$ is s-a connected.
\item $\RM_2$ For every $x \in {\R}$ and for every s-a
  connected component
  $C'$ of $S_x$, $C' \cap \RM(S) \neq \emptyset$,
  where we denote by $S_x$ the set $S \cap \pi_1^{-1}(x)$ for
  $x\in\R$,  
with
$\pi_1:\R^k\rightarrow\R$ the projection map onto the
  first coordinate.
\end{enumerate}
Let $\mathcal{M}\subset \R^k$ be a finite set of points. A roadmap for
$(S,\mathcal{M})$ is a s-a set $\RM(S, \mathcal{M})$ such
that $\RM(S, \mathcal{M})$ is a roadmap of $S$ and $\mathcal{M}\subset
\RM(S, \mathcal{M})$.
\end{definition}

We illustrate this definition by the picture of a torus in $\mathbb{R}^3$ and 
a roadmap of it.

\begin{figure}[h]
  \includegraphics[scale=0.5]{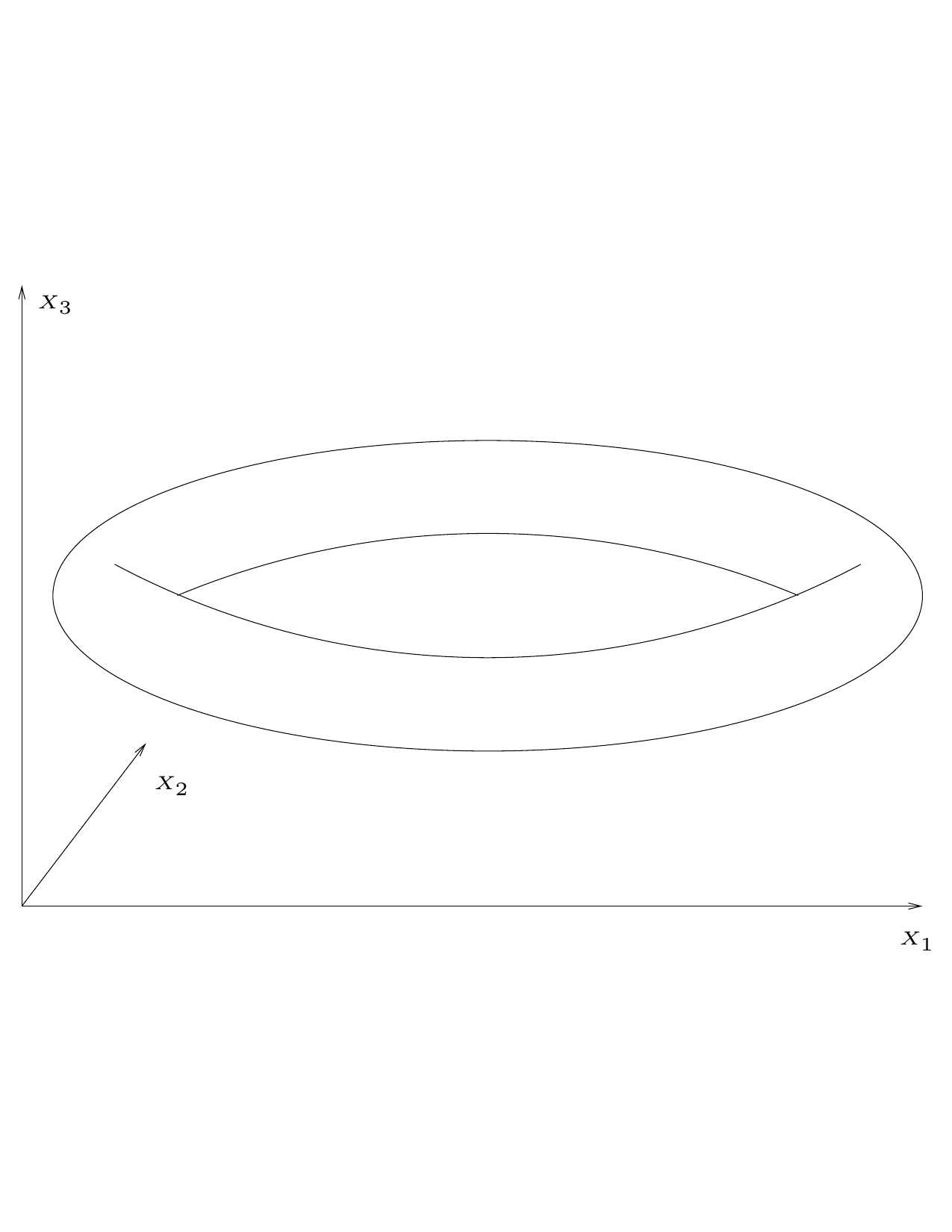}
  \vspace{-1.3in}
\caption{Torus in $\mathbb{R}^3$}
\end{figure}

\begin{figure}[h]
  \includegraphics[scale=0.5]{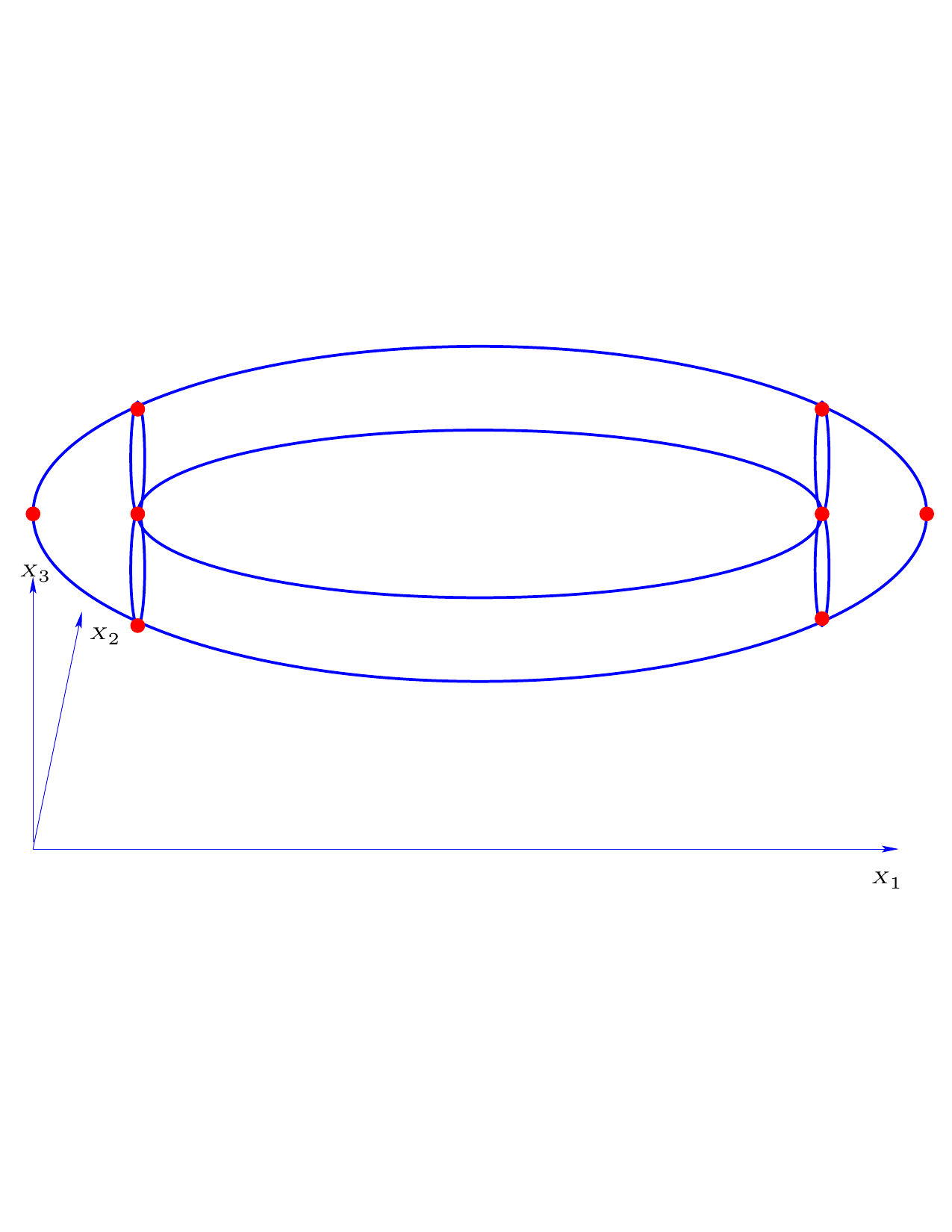}
\vspace{-1.5in}
\caption{A roadmap of the torus in $\mathbb{R}^3$}
\end{figure}

The main result of the paper is the following theorem. The notion of
real univariate representations used in the following statements to represent finite sets
of point in $\R^k$ is explained in Section \ref{sec:def}.
  
\begin{theorem}
 \label{the:babygiant}
 Let $Z \subset \R^k$ be an algebraic set defined as the set of zeros
 of a polynomial of degree at most $d \ge 2$ in $k$ variables with
 coefficients in an ordered domain $\D$ contained in a real closed
 field $\R$.
\begin{enumerate}
\item[a)] There exists an algorithm for constructing a roadmap for $Z$
  using $d^{O(k\sqrt{k})}$ arithmetic operations in $\D$.
\item[b)] Moreover, there exists an algorithm that given a finite set
  of points $\mathcal{M}_0 \subset Z$, with cardinality $\delta$, and
  described by real univariate representations of degree at most
  $d^{O(k)}$, constructs a roadmap for $(Z,\mathcal{M}_0)$ using
  $\delta^{O(1)} d^{O(k\sqrt{k})}$ arithmetic operations in $\D$.
\end{enumerate}
\end{theorem}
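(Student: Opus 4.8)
The plan is to follow the overall strategy of the singly-exponential roadmap algorithms of \cite{BPRbook2}, but to replace the linear recursive scheme (where the first coordinate is fixed and the dimension drops by one) with a baby step–giant step scheme in the spirit of \cite{Mohab-Schost2010}, where the recursion proceeds on the common zero set of several polar-type equations and the dimension drops by roughly $\sqrt{k}$ at each level. First I would reduce to the case of a bounded algebraic set by intersecting $\ZZ(Q,\R^k)$ with a large ball, and then, following \cite{BPRbook2}, replace $Q$ by a deformed polynomial $\Def(Q)$ involving a single infinitesimal $\zeta$ so that the deformed variety is a bounded smooth hypersurface (or a complete intersection of the expected dimension) in $\R\langle\zeta\rangle^k$, and so that the original coordinates $X_1,\ldots,X_k$ are in \emph{generic position} with respect to it. The point of this deformation—and this is exactly where the semi-algebraic flexibility of \cite{BPRbook2} is used instead of the complex-geometric genericity of \cite{Mohab-Schost2010}—is that it makes the \emph{given} coordinates good, so no randomized choice of coordinates is needed; since only one infinitesimal is introduced, it costs only a constant factor in the exponent.

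Next I would define, for the deformed variety, a nested sequence of generalized polar varieties associated with the projection onto the first, say, $\lceil\sqrt{k}\rceil$ coordinates: the critical points of the projection onto the first coordinate, the critical points of the projection onto the first two coordinates restricted to the previous polar variety, and so on. The key geometric facts to establish are (i) these polar varieties have the expected dimensions and are themselves smooth bounded complete intersections to which the construction can be recursively applied, and (ii) a connectivity statement: the union of the $(\le\sqrt{k})$-dimensional polar variety together with the fibers of the projection $\pi_{\le\sqrt{k}}$ over a carefully chosen finite set of ``critical'' parameter values meets every semi-algebraically connected component of the variety, and is semi-algebraically connected in each of them once one recursively adds roadmaps of those fibers. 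This is the analogue, in the baby step–giant step setting, of the Connectivity Theorem used in the classical construction; proving it requires a Morse-theoretic / path-lifting argument adapted to the multi-dimensional base. One then has to take the limit $\zeta\to 0$ to transfer the roadmap of the deformed variety back to a roadmap of $\ZZ(Q,\R^k)$, controlling the input points $\mathcal{M}_0$ along the way (for part b)), which requires tracking real univariate representations through the deformation and the limit; the degree bound $d^{O(k)}$ on the representations of $\mathcal{M}_0$ feeds into the $\delta^{O(1)} d^{O(k\sqrt{k})}$ bound.

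For the complexity bookkeeping: each call reduces the ambient dimension from $k$ to $O(\sqrt{k})$, so the recursion has depth $O(\sqrt{k})$; at each level one solves zero-dimensional and one-dimensional polynomial systems in $k$ variables of degree $d^{O(\sqrt{k})}$ (the polar system), each doable in $(d^{O(\sqrt{k})})^{O(k)} = d^{O(k\sqrt{k})}$ operations in $\D$ using the algorithms for real univariate representations recalled in Section~\ref{sec:def}; multiplying through the $O(\sqrt{k})$ levels and the $d^{O(\sqrt{k})}$-fold branching keeps the total at $d^{O(k\sqrt{k})}$, and the presence of $\mathcal{M}_0$ contributes only the polynomial factor $\delta^{O(1)}$. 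The main obstacle, I expect, is (i)+(ii) above: proving that the deformation can be set up so that \emph{all} the iterated polar varieties (not just the first one) are simultaneously smooth of the expected dimension and bounded, and proving the multi-dimensional connectivity theorem that justifies using $O(\sqrt{k})$-dimensional polar varieties as the recursive object—this is the genuinely new geometric content, going well beyond the one-dimensional-base connectivity arguments of the classical algorithms and beyond the smooth-hypersurface case of \cite{Mohab-Schost2010}.
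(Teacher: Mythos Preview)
Your high-level strategy matches the paper's: reduce to the bounded case, introduce a single infinitesimal deformation $\Def(Q,\eps)$ so that the given coordinates are good, use a $p$-dimensional critical locus (with $p\approx\sqrt{k}$) in place of the classical silhouette, prove a connectivity result for $W^p\cup V_{\mathcal N}$, and recurse on $(k-p)$-dimensional fibers.  Two points, however, deserve attention.

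First, a minor one: your description of ``nested polar varieties obtained by restricting each projection to the previous critical locus'' is not the construction actually needed.  The paper uses, at each recursive level with fixed block $y\in\R^{mp}$, the single set
\[
W^p_{y,\eps}=\ZZ\!\left(\Def(Q,\eps)(y,-),\;\tfrac{\partial \Def(Q,\eps)(y,-)}{\partial X_{mp+p+2}},\ldots,\tfrac{\partial \Def(Q,\eps)(y,-)}{\partial X_{k}}\right),
\]
i.e.\ the critical locus of the projection onto the first $p+1$ free coordinates, which is $p$-dimensional.  No iterated restriction is involved; the connectivity statement (Proposition~\ref{prop:connectivity}) and the low-dimensional roadmap computation (Algorithm~\ref{alg:lowspecial}) are both for this single $W^p_{y,\eps}$.

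Second, and this is the genuine gap: you propose to carry the infinitesimal $\zeta$ through the entire recursion and take $\lim_\zeta$ only at the very end.  The paper explicitly explains (see the Remark following Algorithm~\ref{alg:babygiant_bounded}) that this natural approach fails within the target complexity: if $\eps$ is propagated through $O(\sqrt{k})$ levels of block representations, the degree in $\eps$ of the curve segments grows to $d^{O(k^2)}$, and computing their limits then costs $d^{O(k^2)}$ rather than $d^{O(k\sqrt{k})}$.  The paper's fix is to take the limit \emph{at each level}: one computes $\RM(W^p_{y,\eps},\,W_{\mathcal N_{y,\eps}})$, immediately takes $(\pi_{[1,k]}\circ\lim_\eps)$ of it, and then makes the recursive calls on the \emph{undeformed} fibers $Z_{(y,z)}$ with $z\in\mathcal N_y=\lim_\eps\mathcal N_{y,\eps}$ rather than on $V_{(y,z_\eps),\eps}$.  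This keeps the degree in $\eps$ under control, but it means the connectivity proposition no longer applies directly to the output, since the recursive pieces live in $Z_{(y,z)}$ while the connectivity is proved for $V_{y,\eps}$.  The correctness proof of Algorithm~\ref{alg:babygiant_bounded} therefore has to carefully thread a path in $\mathcal S_\eps = W^p_{y,\eps}\cup (V_{y,\eps})_{\mathcal N_{y,\eps}}$, push each piece through $\lim_\eps$ separately, and use the fact that the endpoints land in the sets $\mathcal M_{(y,z)}$ that were deliberately included in the recursive input.  Your proposal does not anticipate this difficulty, and without it the complexity claim does not go through.
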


The following corollary is an immediate consequence of b).

\begin{corollary}
 \label{cor:babygiant}
  Let $Z \subset \R^k$ be an algebraic set defined as the set of zeros
  of a polynomial of degree at most $d \ge 2$ in $k$ variables with
  coefficients in an ordered domain $\D$ contained in a real closed
  field $\R$.
\begin{enumerate}
\item[a)] There exists an algorithm for counting the number of
  s-a connected components of $Z$ which uses $d^{O(k
    \sqrt{k})}$ arithmetic operations in $\D$.
\item[b)] There exists an algorithm for deciding whether two given
  points, described by real univariate representations of degree at
  most $d^{O(k)}$, belong to the same s-a connected
  component of $Z$ which uses $d^{O(k\sqrt{k})}$ arithmetic operations
  in $\D$.
\end{enumerate}
\end{corollary}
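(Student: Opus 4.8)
The plan is to reduce both statements to Theorem \ref{the:babygiant} by observing that a roadmap already encodes the connectivity information we need. For part a), I would first invoke Theorem \ref{the:babygiant}(a) to construct a roadmap $\RM(Z)$, obtaining it as a semi-algebraic set of dimension at most one together with its description (a finite collection of curve segments and points, each given by real univariate representations, glued along distinguished points) using $d^{O(k\sqrt k)}$ arithmetic operations in $\D$. By the roadmap condition $\RM_1$, for every semi-algebraically connected component $C$ of $Z$ the intersection $C \cap \RM(Z)$ is nonempty (it is nonempty because $\RM_2$ forces $\RM(Z)$ to meet every fiber $Z_x$ that is nonempty, hence every component of $Z$) and semi-algebraically connected. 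Consequently the semi-algebraically connected components of $Z$ are in bijection with those of $\RM(Z)$. So the problem is reduced to counting connected components of a one-dimensional semi-algebraic set presented as a graph-like structure.

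For the second reduction I would treat $\RM(Z)$ as a combinatorial graph: its vertices are the distinguished points of the roadmap description, and its edges are the curve segments, each curve segment joining two distinguished points (after refining the description so that no segment contains a distinguished point in its interior). This refinement, and the determination of which distinguished points lie on the closure of which segment, is a matter of sign determinations along the real univariate representations involved, and can be carried out with a number of arithmetic operations polynomial in the size of the roadmap description, hence within $d^{O(k\sqrt k)}$. Once this finite graph $G$ is available, I would run a standard graph-traversal (union-find or breadth-first search) to count its connected components; since $G$ has at most $d^{O(k\sqrt k)}$ vertices and edges, this costs $d^{O(k\sqrt k)}$ operations, and the count equals the number of semi-algebraically connected components of $Z$. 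This proves a).

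For part b), given the two input points $p_1, p_2 \in Z$ described by real univariate representations of degree at most $d^{O(k)}$, I would set $\mathcal M_0 = \{p_1, p_2\}$ (so $\delta = 2$) and apply Theorem \ref{the:babygiant}(b) to construct a roadmap $\RM(Z,\mathcal M_0)$ containing both points, at cost $2^{O(1)} d^{O(k\sqrt k)} = d^{O(k\sqrt k)}$. Since $\RM(Z,\mathcal M_0)$ is a roadmap of $Z$, each point $p_i$ lies in $C_i \cap \RM(Z,\mathcal M_0)$ where $C_i$ is the connected component of $Z$ containing $p_i$, and by $\RM_1$ this intersection is semi-algebraically connected; hence $p_1$ and $p_2$ lie in the same component of $Z$ if and only if they lie in the same semi-algebraically connected component of $\RM(Z,\mathcal M_0)$, equivalently in the same connected component of the associated finite graph $G$. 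I would then build $G$ as above (ensuring $p_1, p_2$ appear as vertices, which is automatic since they are among the distinguished points) and test whether $p_1$ and $p_2$ are in the same component of $G$ by a single graph search, again within $d^{O(k\sqrt k)}$ operations.

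The only delicate point is the passage from the semi-algebraic object $\RM(Z)$ to the finite graph $G$: one must be sure that the roadmap description produced by the algorithm of Theorem \ref{the:babygiant} is explicit enough that the incidences between curve segments and distinguished points, and the sign conditions separating the segments, can be read off by a bounded number of arithmetic operations and sign evaluations over $\D$. This is a purely bookkeeping issue about the format of the output, identical in spirit to the analogous steps in \cite{BPRbook2}, and it does not affect the asymptotic complexity class; everything else is immediate from the roadmap conditions $\RM_1$ and $\RM_2$.
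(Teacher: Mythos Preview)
Your proposal is correct and follows exactly the intended route: the paper itself gives no detailed argument for this corollary, stating only that it ``is an immediate consequence of b)'' of Theorem~\ref{the:babygiant}, and your write-up simply unpacks that immediate consequence in the standard way (roadmap $\Rightarrow$ finite graph $\Rightarrow$ component count / reachability). The passage from the roadmap description to the combinatorial graph, which you flag as the only delicate point, is indeed handled by the adjacency information that is explicitly part of the output of the roadmap algorithms (see the output specification of Algorithm~\ref{alg:curvesegments} and Algorithm~\ref{alg:lowspecial}), so no additional work is required there.
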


\begin{remark}
\label{rem:number}
We can always suppose without loss of generality that the zero set of
a family of polynomials of degree at most $d$ is defined by one single
polynomial of degree at most $2d$ by replacing the input polynomials
by their sum of squares.
\end{remark}

\begin{remark}
\label{rem:rcf}
Even if the input is a polynomial with coefficients in the field of
real numbers, the deformation techniques by infinitesimal elements we
use make it necessary to perform computations on polynomials with
coefficients in some non-archimedean real closed field.  This is the
reason why general real closed fields provide a natural framework for
our work.
\end{remark}


\section{Outline}
\label{sec:outline}

We outline below the classical construction of a roadmap
$\RM(\ZZ(Q,\R^k))$ for a bounded algebraic set $\ZZ(Q,\R^k)$, defined
as the zero set of a polynomial $Q$
contained in 
$\R^k$.  The geometric
ideas yielding this construction are due to Canny.  The description
below is similar to the one in \cite[Chapter 15, Section
  15.2]{BPRbook2}.

A key ingredient of the algorithm is the construction of a particular
finite set of points intersecting every s-a connected
component of $\ZZ(Q,\R^k)$.  In the case of a bounded and non-singular
real algebraic set in $\R^k$, these points are
nothing but the set of critical points of the projection to the
$X_1$-coordinate on $\ZZ(Q,\R^k)$.  In more general situations, the
points we consider are called $X_1$-pseudo-critical points, since they
are obtained as limits of the critical points of the projection to the
$X_1$-coordinate of a bounded nonsingular algebraic hypersurface
defined by a particular infinitesimal deformation of the polynomial
$Q$.  Their projections on the $X_1$-axis are called pseudo-critical
values.

We first construct the ``silhouette'' which is the set of
$X_2$-pseudo-critical points on $\ZZ(Q,\R^k)$ along the $X_1$-axis by
following continuously, as $x$ varies on the $X_1$-axis, the
$X_2$-pseudo-critical points on $\ZZ(Q,\R^k)_{x}$.  
Note that in case $\ZZ(Q,\R^k)$ is a non-singular hypersurface, then the 
``silhouette'' described above is the set of critical points of the projection map to the coordinates
$X_1$ and $X_2$. However, we are not assuming here that $\ZZ(Q,\R^k)$ is a non-singular hypersurface.
This results in
curves and their endpoints on $\ZZ(Q,\R^k)$. The curves are continuous
s-a curves parametrized by open intervals on the $X_1$-axis
and their endpoints are points of $\ZZ(Q,\R^k)$ above the
corresponding endpoints of the open intervals.  Since these curves and
their endpoints include for every $x\in\R$ the $X_2$-pseudo-critical
points of $\ZZ(Q,\R^k)_{x}$, they meet every s-a
connected component of $\ZZ(Q,\R^k)_{x}$.  Thus, the set 
consisting of these
curves and
their endpoints, already satisfy $\RM_2.$ However, it is clear that
this set might not be s-a connected in a
s-a connected component and so $\RM_1$ might not be
satisfied.

In order to ensure property $\RM_1$ we need to add more curves to the
roadmap. For this purpose, we define the set of {\em distinguished
  values} $\mathcal{D}$ as the union of the $X_1$-pseudo-critical
values, and the first coordinates of the endpoints of the curves
described in the previous paragraph.  A {\em distinguished hyperplane}
is a hyperplane defined by $X_1=v$, where $v$ is a distinguished
value. The input points, the endpoints of the curves, and the
intersections of the curves with the distinguished hyperplanes define
the set of {\em distinguished points}, $\mathcal{M}$.

Let the distinguished values be $v_1<\ldots <v_N.$ Note that amongst
these are the $X_1$-pseudo-critical values. Above each interval $(v_i,
v_{i+1})$ we have constructed a collection of curves $\mathcal{C}_i$
meeting every s-a connected component of
$\ZZ(Q,\R^k)_v$ for every $v \in (v_i, v_{i+1})$. Above each
distinguished value $v_i$ we have a set of distinguished points
$\mathcal{M}_i$.
  Each curve in $\mathcal{C}_i$ has an endpoint in
$\mathcal{M}_i$,
and another one in   $\mathcal{M}_{i+1}$.  
Moreover, the
union of the 
$\mathcal{M}_i$ contains $\mathcal{M}$.  
We denote by
$\mathcal{C}$ the union of the $\mathcal{C}_i$.

The following key connectivity result is proved in \cite[Lemma
15.9]{BPRbook2posted2}.
 
\begin{proposition}
  \label{prop:ordinary}  
  Let $\mathcal{R} =\mathcal{C} \cup {\rm Zer} (Q,
  \R^k)_{\mathcal{D}}$.  If $P$ is a s-a connected
  component of~${\rm Zer} (Q, \R^k),$ then $\mathcal{R} \cap P$ is
  s-a connected.
\end{proposition}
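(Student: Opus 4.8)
The plan is to prove Proposition~\ref{prop:ordinary} by a two-stage connectivity argument, following the structure used in \cite[Lemma 15.9]{BPRbook2posted2}. The first stage reduces the global connectivity statement about $\mathcal{R}\cap P$ to a ``local'' connectivity statement about each slice and each slab; the second stage propagates connectivity across distinguished hyperplanes.

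First I would fix a semi-algebraically connected component $P$ of $\ZZ(Q,\R^k)$ and consider the decomposition of $\R$ induced by the distinguished values $v_1<\cdots<v_N$ into the points $\{v_i\}$ and the open intervals $(v_i,v_{i+1})$ (together with the two unbounded intervals). Correspondingly $P$ is partitioned into the slices $P_{v_i}=P\cap\pi_1^{-1}(v_i)$ and the slabs $P\cap\pi_1^{-1}((v_i,v_{i+1}))$. The key point is that over each open interval $I=(v_i,v_{i+1})$ there are no pseudo-critical values, so by the standard local triviality of the projection $\pi_1$ away from pseudo-critical values (Hardt-type triviality, available over a real closed field), $P\cap\pi_1^{-1}(I)$ is semi-algebraically homeomorphic to a product $F\times I$ respecting $\pi_1$; hence its semi-algebraically connected components are in bijection, via $\pi_1$-fibers, with those of any slice $P_v$, $v\in I$. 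By construction the curves $\mathcal{C}_i$ meet every semi-algebraically connected component of $\ZZ(Q,\R^k)_v$, hence every component of $P_v$, for each $v\in I$; so within the slab, $\mathcal{C}_i$ has at least one curve inside each component, and each such curve is itself connected. One then has to check that $\mathcal{C}_i\cap(P\cap\pi_1^{-1}(I))$ is semi-algebraically connected component-by-component — i.e.\ that two curves lying in the same slab-component can be joined within $\mathcal{R}\cap P$ — and this is exactly where the distinguished hyperplanes are needed, so this reduction must be interleaved with the second stage rather than completed in isolation.

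Second I would handle the distinguished slices. Over $v_i$ the set $\mathcal{N}_i=\ZZ(Q,\R^k)_{v_i}\cap\mathcal{M}$ is finite, but the curves of $\mathcal{C}_{i-1}$ (from the left slab) and $\mathcal{C}_i$ (from the right slab) all have endpoints in $\mathcal{N}_i$, and the ``silhouette'' construction guarantees that over $v_i$ these endpoints, together with the intersections of curves with the distinguished hyperplane $X_1=v_i$, meet every semi-algebraically connected component of $P_{v_i}$. The crucial merging step is then: if $x,y\in\mathcal{R}\cap P$ lie over the same side and in the same slab-component, push them (along the product structure) to the distinguished hyperplane on either end of the slab; there they land in points of $\mathcal{N}_i$ which lie in the same semi-algebraically connected component of $P_{v_i}$; and since $\mathcal{C}_{i-1}$ over the \emph{previous} interval also reaches into that component of $P_{v_i}$ — because $\mathcal{C}_{i-1}$ meets every component of $P_v$ for $v$ slightly less than $v_i$, and taking limits as $v\to v_i^-$ these curve-endpoints land in $\mathcal{N}_i$ and, by connectedness of the curves plus the limit argument, in the correct component of $P_{v_i}$ — one can route a path from the slab-component across the hyperplane into the adjacent slab-component. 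A clean way to organize this is to build an auxiliary graph whose vertices are the pairs (distinguished value $v_i$, semi-algebraically connected component of $P_{v_i}$) and whose edges are the slab-components of $P$, each slab-component joining the component of $P_{v_i}$ and the component of $P_{v_{i+1}}$ it limits into; the statement $\RM_1$ for $P$ then follows from connectedness of this graph, which in turn follows from connectedness of $P$ itself together with the covering of $P$ by the closures of the slabs.

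The main obstacle I anticipate is controlling the \emph{limits of the curves at the distinguished values}: one must show that as $v\to v_i^{\pm}$ the curves of $\mathcal{C}_{i-1}$ and $\mathcal{C}_i$ have well-defined endpoints in $\ZZ(Q,\R^k)_{v_i}$, that these endpoints lie in $\mathcal{N}_i$, and above all that they lie in the \emph{same} semi-algebraically connected component of $P_{v_i}$ as the nearby slab-components they bound — this is not automatic because a single semi-algebraically connected component of the slice $P_{v_i}$ can be the limit of several distinct slab-components, and components can merge or split precisely at $v_i$. Handling this rigorously requires the curve-selection lemma (or an explicit semi-algebraic triviality argument on the closed slabs $\overline{(v_i,v_{i+1})}$) and careful bookkeeping of which endpoints of $\mathcal{C}_{i-1}$ versus $\mathcal{C}_i$ attach to which component of $P_{v_i}$. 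Everything else — the product structure over open intervals, the finiteness of $\mathcal{N}_i$, the fact that $\mathcal{C}_i$ meets every component of every intermediate slice — is either standard semi-algebraic triviality or built directly into the construction of $\mathcal{C}$ and $\mathcal{D}$ recalled above, so the proof is really the combinatorial assembly plus this one limiting/attachment lemma.
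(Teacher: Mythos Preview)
The paper does not actually prove Proposition~\ref{prop:ordinary}; it simply cites \cite[Lemma~15.9]{BPRbook2posted2}. So there is no in-paper proof of this statement to compare your proposal against directly.

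That said, the paper \emph{does} give a full proof of a generalization, Proposition~\ref{prop:connectivity}, and the method there is organized differently from yours. You propose a slab/slice decomposition together with an auxiliary-graph connectivity argument. The paper instead runs a sweep along the $X_1$-axis: it defines, for each $a\in\R$, the property $\mathbf{P}(a)$ that every semi-algebraically connected component of $V_{\le a}$ meets $\mathcal{S}$ in a non-empty semi-algebraically connected set, and establishes $\mathbf{P}(a)$ for all $a$ via two inductive steps (Step~1: propagation across an interval $(a,b]$ free of distinguished values; Step~2: passage through a distinguished value $b$, assuming $\mathbf{P}(a)$ for all $a<b$). The attachment/limit issues you correctly flag as the main obstacle are handled in the paper not through Hardt triviality on open slabs but by two tailored lemmas (Lemmas~\ref{lemmafora} and~\ref{lemmaforb}) describing how components of $V_{\le b}$ restrict to $V_{\le a}$ and how the closures $\overline{B_i}$ of the components of $C_{<b}$ cover $C$ and meet $\mathcal{M}_1$; these rest on the critical-point structure (\cite[Proposition~7.3]{BPRbook2posted2}, \cite[Proposition~15.1~b]{BPRbook2posted2}) rather than on any product decomposition.

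Your overall architecture is viable and is close in spirit to the classical argument, but one step is loose: invoking ``Hardt-type triviality'' over an open interval $I$ free of \emph{pseudo}-critical values does not by itself give you a semi-algebraic product structure on $P\cap\pi_1^{-1}(I)$ (Hardt's theorem gives a partition into trivializing pieces, not triviality over your specific $I$, and pseudo-critical values are defined via an infinitesimal deformation, not as genuine critical values of $\pi_1$ on $\ZZ(Q,\R^k)$). What you actually need---that a semi-algebraically connected component of $P_{\le b}$ restricts to a single semi-algebraically connected component of $P_{\le a}$ when $(a,b]$ avoids the distinguished values---is precisely the content of Lemma~\ref{lemmafora}, and its proof goes through an infinitesimal $\eps$ and a limit argument rather than a product structure. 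If you replace your triviality appeal by that lemma (or reprove it), the rest of your graph assembly goes through.
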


Thus, in order to construct a roadmap of $\ZZ(Q,\R^k)$ it suffices to
repeat the same construction in each distinguished hyperplane $H_i$
defined by $X_1=v_i$ with input $Q(v_i,X_2,\ldots,X_k)$ and the
distinguished points in $\mathcal{M}_{v_i}$ by making recursive calls
to the algorithm.  The following proposition is proved in
\cite[Proposition 15.7]{BPRbook2}.

\begin{proposition}
\label{prop:rm}
The s-a set $\RM(\ZZ(Q,\R^k),\mathcal{M})$ obtained by this
construction is a roadmap for $\ZZ(Q,\R^k)$ containing $\mathcal{M}$.
\end{proposition}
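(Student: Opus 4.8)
The plan is to argue by induction on $k$, following the recursive structure of the construction described in Section~\ref{sec:outline}. When $k=1$, the set $\ZZ(Q,\R)$ is a finite set of points (or all of $\R$ in the degenerate case $Q\equiv 0$), and one takes $\RM(\ZZ(Q,\R),\mathcal{M})=\ZZ(Q,\R)$; this has dimension at most one, contains $\mathcal{M}$, and satisfies $\RM_1$ and $\RM_2$ since every fibre $\ZZ(Q,\R)_x$ is a point or empty. For the inductive step, assuming the proposition in dimension $k-1$, write $\RM=\RM(\ZZ(Q,\R^k),\mathcal{M})=\mathcal{C}\cup\bigcup_i\RM(\ZZ(Q,\R^k)_{v_i},\mathcal{M}_{v_i})$, where $\mathcal{M}_{v_i}$ denotes the set of distinguished points above $v_i$; we must verify that $\RM$ has dimension at most one, is contained in $\ZZ(Q,\R^k)$, contains $\mathcal{M}$, and satisfies $\RM_1$ and $\RM_2$.

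The first four points are essentially bookkeeping. The curves in $\mathcal{C}$ are one-dimensional and lie on $\ZZ(Q,\R^k)$ by construction, and by the induction hypothesis each $\RM(\ZZ(Q,\R^k)_{v_i},\mathcal{M}_{v_i})$ has dimension at most one and lies on the slice $\ZZ(Q,\R^k)_{v_i}\subseteq\ZZ(Q,\R^k)$; hence $\dim\RM\le 1$ and $\RM\subseteq\ZZ(Q,\R^k)$. Every point of $\mathcal{M}$ lies on some distinguished hyperplane $H_i$, so it belongs to $\mathcal{M}_{v_i}$, which by the induction hypothesis is contained in $\RM(\ZZ(Q,\R^k)_{v_i},\mathcal{M}_{v_i})\subseteq\RM$; thus $\mathcal{M}\subseteq\RM$. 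For $\RM_2$, fix $x\in\R$ and a semi-algebraically connected component $D$ of $\ZZ(Q,\R^k)_x$. If $x\in(v_i,v_{i+1})$, then by construction the curves of $\mathcal{C}_i$ meet every semi-algebraically connected component of $\ZZ(Q,\R^k)_x$, so $D\cap\RM\ne\emptyset$. If $x=v_i$, then $\ZZ(Q,\R^k)_x$ is the slice on which we recursed, and by the induction hypothesis its recursively built roadmap is a roadmap of that slice; a roadmap of a non-empty semi-algebraic set $S$ meets every semi-algebraically connected component of $S$ (this follows from $\RM_2$ by lifting a point of such a component to the fibre of $\pi_1$ through it), so again $D\cap\RM\ne\emptyset$.

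The real content is $\RM_1$. Fix a semi-algebraically connected component $P$ of $\ZZ(Q,\R^k)$; we must show $\RM\cap P$ is semi-algebraically connected. By Proposition~\ref{prop:ordinary}, the set $\mathcal{R}\cap P$ is semi-algebraically connected, where $\mathcal{R}=\mathcal{C}\cup\ZZ(Q,\R^k)_{\mathcal{D}}$, so it suffices to show that any semi-algebraic path inside $\mathcal{R}\cap P$ can be replaced by a semi-algebraic path inside $\RM\cap P$ with the same endpoints. Two observations make this work. First, a semi-algebraically connected component $D$ of a slice $\ZZ(Q,\R^k)_{v_i}$, being connected, is either contained in $P$ or disjoint from it; hence $\ZZ(Q,\R^k)_{v_i}\cap P$ is a union of full components $D$, and on each such $D$ the set $\RM(\ZZ(Q,\R^k)_{v_i},\mathcal{M}_{v_i})\cap D$ is non-empty and semi-algebraically connected by the induction hypothesis. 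Second, a semi-algebraic path in $\mathcal{R}\cap P$ decomposes, using that the semi-algebraic function $\pi_1$ restricted to the path takes each value of the finite set $\mathcal{D}$ on a finite union of points and closed intervals, into arcs lying in $\mathcal{C}$ and arcs lying in a single slice $\ZZ(Q,\R^k)_{v_i}$; the junction points between consecutive arcs lie simultaneously on a curve of $\mathcal{C}$ and on a distinguished hyperplane, hence are endpoints of curves, hence distinguished points, and so lie in the relevant $\mathcal{M}_{v_i}$ and in the recursive roadmap. One keeps the $\mathcal{C}$-arcs unchanged — they already lie in $\RM\cap P$ — and replaces each slice arc by a path inside $\RM(\ZZ(Q,\R^k)_{v_i},\mathcal{M}_{v_i})\cap D$ joining its two endpoints, where $D\subseteq P$ is the component of $\ZZ(Q,\R^k)_{v_i}$ containing the arc: this is possible because, by the two observations above, that set is semi-algebraically connected and contains both endpoints of the arc. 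The result is a semi-algebraic path in $\RM\cap P$ with the same endpoints as the original, so $\RM\cap P$ is semi-algebraically connected, which completes the induction.

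I expect the main obstacle to be making this rerouting argument for $\RM_1$ fully rigorous: one must control how a semi-algebraic path in $\mathcal{R}\cap P$ alternates between the curve part $\mathcal{C}$ and the slices $\ZZ(Q,\R^k)_{v_i}$, verify that the junction points are genuinely among the distinguished points (invoking the precise definition of the set of distinguished points as the input points together with the endpoints of the curves and the intersections of the curves with the distinguished hyperplanes), and keep track of which semi-algebraically connected component of each slice the various arcs lie in. The remaining verifications, including $\RM_2$ and the base case, are routine given the description in Section~\ref{sec:outline} and the induction hypothesis.
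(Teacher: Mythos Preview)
Your sketch is correct and follows the standard inductive argument; note that the paper itself does not supply a proof of this proposition but simply cites \cite[Proposition~15.7]{BPRbook2}, where precisely this induction-and-rerouting strategy is carried out. One small correction: when you say the junction points ``lie simultaneously on a curve of $\mathcal{C}$ and on a distinguished hyperplane, hence are endpoints of curves,'' the first clause is not literally true since the curves in $\mathcal{C}_i$ are parametrized by the \emph{open} intervals $(v_i,v_{i+1})$; what you actually use is that a junction point lies in $\overline{\mathcal{C}}\cap\ZZ(Q,\R^k)_{v_i}$, which by construction is contained in the set $\mathcal{N}_i$ of curve endpoints above $v_i$, hence among the distinguished points --- you already flag this in your final paragraph, so the sketch is sound.
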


To summarize, classical roadmap algorithms based on Canny's
construction proceed by first considering the ``silhouette'',
consisting of curves in the $X_1$-direction, and then making recursive
calls to the same algorithm at certain hyperplane sections of
$\ZZ(Q,\R^k)$, so that the dimension of the ambient space drops by $1$
at each recursive call.

The main difference between classical roadmap algorithms and the
algorithms described in \cite{Mohab-Schost2010} and in the current
paper is that instead of considering curves in the $X_1$-direction
and making recursive calls to the same algorithm at certain hyperplane
sections of $\ZZ(Q,\R^k)$ corresponding to special values of $X_1$, so
that the dimension of the ambient space drops by 1, we consider a
$p$-dimensional subset $W^{(p)}$ of $\ZZ(Q,\R^{k})$ where $1 \leq p \leq k$,
and make recursive calls at certain $(k-p)$-dimensional fibers of
$\ZZ(Q,\R^{k})$, so that the dimension of the ambient space drops by
$p$.

The main topological result, analogous to Proposition
\ref{prop:ordinary}, is that if the set $\ZZ(Q,\R^k)$ satisfies
certain conditions, such as having only isolated singular points, then
the s-a set which is the union of $W$ and these fibers are s-a
connected.  This is proved in Section \ref{sec:connectivity}.  It
follows, though not immediately, that for a general real algebraic
set, $\ZZ(Q,\R^k)$, in order to produce a roadmap of $\ZZ(Q,\R^k)$ it
suffices to compute:
\begin{enumerate}
\item
a roadmap of a certain s-a
subset $W$ of an infinitesimal deformation of $\ZZ(Q,\R^k)$
passing through a carefully chosen set of points, 
and taking the limit of the curves so obtained by letting the perturbarion
variable go to $0$;
\item
roadmaps of certain fibers in a $(k-p)$-dimensional ambient space, using
recursive calls.
\end{enumerate}

The fact that in the new algorithm we are fixing a whole block of $p$
variables at a time necessitates introducing a new kind of algebraic
representation which we call ``real block representation''. This
notion is defined in Section \ref{sec:def}, where we also explain how
to represent curves
over such blocks.

In Section \ref{sec:lowspecial}, we consider the case when $W$ is 
low-dimensional and described by equations having a special structure.
Adapting an
algorithm from \cite[Algorithm 15.3]{BPRbook2} we show how to
compute a roadmap of $W$ in this case with complexity depending in a crucial way
on the dimension of $W$.
The general case, requiring the use of a deformation technique
and a limit process, is described in Section \ref{sec:lowgeneral}.
 
Finally, we obtain in Section \ref{sec:main} a 
baby step - giant step
roadmap
algorithm for a general algebraic set. We prove its correctness, as
well as the improved complexity bound.

The algorithm for computing efficiently limits of curve segments is
quite
technical. Since this 
technicality
can obscure the ideas
behind the main algorithm, for the sake of readability we have
postponed the details behind taking limits of curves to a separate
section (Section \ref{sec:details}).

Throughout the paper, we use as a basic reference \cite{BPRbook2}. We cite
~\cite{BPRbook2posted2} instead when the precise statements needed 
are not included in \cite{BPRbook2}.
 

\section{Connectivity results}
\label{sec:connectivity}

In this section we prove a topological result about connectivity which
will be used in proving the correctness of our algorithm later.  The
statement of the result, as well as the main ideas of the proof, is
influenced by \cite[Theorem 14]{Mohab-Schost2010}. It is a
direct generalization of Proposition \ref{prop:ordinary} to the case
of projection onto more than one variable.

We denote by $\R$ a real closed field.

\begin{notation}
\label{not:projection-and-fibers}
For $ 1 \leq p < k$, we denote by $\pi_p$ the projection 
$$(x_1, \ldots, x_k)\mapsto x_p.$$
For $ 1\leq q \leq p < k$, we denote by $\pi_{[q,p]}:
\R^{k}\rightarrow \R^{p-q+1}$ the projection
$$(x_1, \ldots, x_k)\mapsto (x_q, \ldots, x_p).$$

For any pair of s-a subsets $S \subset \R^k$, and $T \subset
\R^{p}$, we denote by $S_T$ the s-a set
$\pi_{[1,p]}^{-1}(T) \cap S$, and $S_{y}$ rather than $S_{\{y\}}$, for
$y\in \R^{p}$. We also 
write
$S_{<a}$ and $S_{\le a}$ rather than
$S_{(-\infty,a)}$ and $S_{(-\infty,a]}$, for $a\in \R$.
\end{notation}

We denote as before by $\ZZ(Q,\R^k)$ the algebraic set of zeros of a
polynomial $Q\in \R[X_1,\ldots,X_k]$  
contained in
$\R^k$. 
Note that for a non-constant polynomial $Q$, the real dimension of $\ZZ(Q,\R^k)$ is not necessarily equal to $k-1$, and indeed $\ZZ(Q,\R^k)$ might even be empty.
In fact,
over any real closed field, algebraic sets defined by one equation
coincide with general algebraic sets since replacing several equations
by their sum of squares does not modify the zero set.  A $Q$-{\bf
  singular point} is a point $x \in \R^k$ such that
$$Q(x)=\frac{\partial Q}{\partial X_1}(x)=\ldots=\frac{\partial Q}{\partial X_k}(x)=0.
$$ Note that this is an algebraic property related to the  
polynomial
$Q$
rather than a geometric property of the underlying set $\ZZ(Q,\R^k)$:
two equations can define the same algebraic set but have different
sets of singular points.

Similarly a $Q$-{\bf critical point} of $\pi_1$ is a point $x\in\R^k$ such
that
$$
Q(x)=\frac{\partial Q}{\partial X_2}(x)=\cdots=\frac{\partial Q}{\partial X_k}(x)=0.
$$
To simplify notation, when there will be no ambiguity regarding $Q$, we
will simply refer to singular/critical points.

In this paper, we will be using constantly 
the notion of
s-a connected components of a s-a set
\cite[Section 5.2]{BPRbook2}.  Note that, in particular, a
s-a connected component is 
always
non-empty by
definition
(see \cite[Theorem 5.21]{BPRbook2posted2} and the definition of s-a connected components
following it). In particular, the empty set has no s-a
connected component.

\begin{property}
\label{property:special0}
We now consider for $1\leq p < k$ a tuple 
$$
\displaylines{
\left(V,
\mathcal{M},
W^{(p)},\mathcal{M}^{(p)},\mathcal{D}^{(p)}
\right)
}
$$ 
with the following properties:
\begin{enumerate}
\item $V \subset \R^k$ is the union of certain bounded
  s-a connected components of an algebraic set
  $\ZZ(Q,\R^k)\subset \R^k$, such that 
   the $Q$-critical points of the map $\pi_1$ 
  on $V$ 
  (which contains in particular the $Q$-singular points contained in $V$)
  form the finite set $\mathcal{M} \subset V$;
\item $W^{(p)} \subset V$ is a closed s-a set of dimension $p$, such
  that for each $y \in \R^{p}$, $W_y^{(p)}$ (cf. Notation
  \ref{not:projection-and-fibers}) is a finite set of points having
  non-empty intersection with every s-a connected component of $V_y$;
\item $\mathcal{M}^{(p)} \subset V$ is a finite subset  such that the
  intersection of $\mathcal{M}^{(p)} $ with every s-a
  connected component of $W^{(p)}_{a}$ is non-empty, for
  $a\in{\mathcal{D}^{(p)}=\pi_1(\mathcal{M}^{(p)}})$. Moreover, for every
  interval $[a,b]$ and $c\in [a,b]$ with $\{c\}\supset \mathcal{D}^{(p)}
  \cap [a,b]$, if $C$ is a s-a connected component of
  $W^{(p)}_{[a,b]}$, then $C_c$ is a s-a connected component
  of $W^{(p)}_c$.
\end{enumerate}
A tuple 
$$
\displaylines{
\left(V,
\mathcal{M},
W^{(p)}, \mathcal{M}^{(p)},\mathcal{D}^{(p)}
\right)
}
$$ 
is said to satisfy Property \ref{property:special0} if it satisfies the above
properties (1) to (3).
\end{property}

Now we state the main result of this section. It generalizes
Proposition \ref{prop:ordinary} as well as \cite[Theorem
  14]{Mohab-Schost2010}, 
  in the special case when Property \ref{property:special0} holds.

\begin{proposition}
\label{prop:connectivity}
Let
$$
\displaylines{
\left(V,
\mathcal{M},
W^{(p)},\mathcal{M}^{(p)},\mathcal{D}^{(p)}
\right)
}
$$ satisfy Property
\ref{property:special0}, 
$$
\mathcal{N}=\pi_{[1,p]}(\mathcal{M}),
\mathcal{N}^{(p)}=\pi_{[1,p]}(\mathcal{M}^{(p)}), 
$$
and
$$
\displaylines{
{\mathcal{S}}  =  W^{(p)} \cup V_{\mathcal{N}\cup\mathcal{N}^{(p)}}.
}
$$
For every s-a connected component  $C$  of $V$, $C
\cap \mathcal{S}$ is non-empty and s-a connected.
\end{proposition}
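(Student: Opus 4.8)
The plan is to prove Proposition~\ref{prop:connectivity} by a double induction: an outer induction on $k$ (or really on $k-p$), and within that, an argument that mirrors the proof of Proposition~\ref{prop:ordinary} (i.e. \cite[Lemma 15.9]{BPRbook2posted2}), but organized around the block projection $\pi_{[1,p]}$ instead of the single projection $\pi_1$. Fix a semi-algebraically connected component $C$ of $V$. Non-emptiness of $C \cap \mathcal{S}$ is immediate: since $W_y$ meets every semi-algebraically connected component of $V_y$ for each $y$, in particular picking any $y \in \pi_{[1,p]}(C)$ gives a point of $W \cap C$. The real content is semi-algebraic connectedness, and here the key device is the retraction-type argument: I would show that $C \cap \mathcal{S}$ is semi-algebraically connected by showing that every point of $C$ can be connected, within $C \cap \mathcal{S}$, to a fixed base point, using a path that first descends within a fiber $V_y$ to a point of $W_y$, then moves along $W$.

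First I would set up the fiberwise picture. For $y \in \R^p$, condition (2) says $W_y$ is a finite set meeting every semi-algebraically connected component of $V_y$; so $V_y$, which is a bounded algebraic set, has the property that contracting each of its components to the corresponding point(s) of $W_y$ is harmless for connectivity. The standard move (as in the classical roadmap proof) is to consider the semi-algebraic set $\mathcal{S} = W \cup V_{\mathcal{N}_1 \cup \mathcal{N}_2}$ and argue that the inclusion $\mathcal{S} \cap C \hookrightarrow C$ induces a surjection on $\pi_0$ that is also injective. Surjectivity is the fiberwise statement just described. For injectivity — two points of $\mathcal{S} \cap C$ that are connected in $C$ are connected in $\mathcal{S} \cap C$ — one follows a connecting path $\gamma$ in $C$ and projects it by $\pi_{[1,p]}$ into $\R^p$; the image is a semi-algebraic path. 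One then subdivides according to the critical values: the set $\mathcal{N}_1 = \pi_{[1,p]}(\mathcal{M}_1)$ plays the role of the pseudo-critical values of $\pi_{[1,p]}$ restricted to $V$ (away from $\mathcal{M}_1$, the map $\pi_{[1,p]}\colon V \to \R^p$ is, roughly, a submersion, so $V$ is locally a fibration over $\R^p$ and pseudo-critical points can be followed continuously). Over the complement of $\mathcal{N}_1$, one can continuously deform the piece of $\gamma$ to a path lying in $W$ by following $W_{\pi_{[1,p]}(\gamma(t))}$; near a value in $\mathcal{N}_1$ one uses the local conic structure of $V$ near the finitely many points of $\mathcal{M}_1$ to push the path through $V_{\mathcal{N}_1}$. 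This is exactly where condition (1) — that the $Q$-singular points and the $\pi_1$-critical points of $V$ all lie in the finite set $\mathcal{M}_1$ — is used, and where the deformation of $Q$ by an infinitesimal (alluded to in the outline) enters to guarantee that away from $\mathcal{M}_1$ the relevant polar-type varieties are smooth and the fibration picture is valid.

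Next, condition (3) is what makes the argument go through for $W$ itself rather than just reducing to $V$. It says $\mathcal{M}_2$ meets every component of each slice $W_a$ for $a \in \mathcal{D}_2$, and — crucially — that for intervals $[a,b]$ containing at most one distinguished value $c$, the components of $W_{[a,b]}$ restrict to components of $W_c$. This is precisely the hypothesis that lets one run the classical one-variable roadmap connectivity argument \emph{on $W$ along the $X_1$-axis}: it guarantees that the curve segments making up $W$ over the open intervals between consecutive values of $\mathcal{D}_2$, together with the distinguished points $\mathcal{M}_2$, glue up correctly, so that $W \cup V_{\mathcal{N}_2}$ has the right connectivity relative to $W$. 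Combining: a path in $C$ is first pushed (using (1)) into $\mathcal{S}$, landing in $W \cup V_{\mathcal{N}_1}$; then within $W$ one uses (3) and $V_{\mathcal{N}_2}$ to connect up the pieces. I would formalize the gluing as: $C \cap \mathcal{S}$ is connected iff for each consecutive pair of distinguished values the pieces over the adjoining open intervals are attached along the common slice, and each such slice-piece is connected by the induction hypothesis applied in the $(k-p)$-dimensional ambient space (or for $W$, by (3)).

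The main obstacle I expect is the step that handles the passage of a path through the ``bad'' locus: making rigorous the claim that near each point of $\mathcal{M}_1$ (resp. over each value of $\mathcal{D}_1$) a path in $C$ can be deformed into $\mathcal{S}$ without leaving $C$. In the classical ($p=1$) case this rests on the local conic structure of semi-algebraic sets and on the fact that pseudo-critical points are obtained as limits of genuine critical points on a smooth infinitesimal deformation; here one must check that the block-projection version still enjoys the property that $V$ is a locally trivial fibration over $\R^p \setminus \mathcal{N}_1$ — equivalently, that the pseudo-critical points of $\pi_{[1,p]}$ can be followed continuously over each semi-algebraically connected piece of $\R^p \setminus \mathcal{N}_1$, and that the fibers $V_y$ behave uniformly. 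This is the analogue, in the semi-algebraic deformation framework, of the smoothness-of-polar-varieties input of \cite[Theorem 14]{Mohab-Schost2010}, and verifying it carefully — in particular checking that a single infinitesimal suffices and that one does not need generic coordinates — is the technical heart of the proof. Everything else is bookkeeping with semi-algebraic paths and the combinatorics of distinguished values.
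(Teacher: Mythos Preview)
Your proposal has a genuine gap at its core: you organize the argument around the block projection $\pi_{[1,p]}$ and assert that away from $\mathcal{M}_1$ the map $\pi_{[1,p]}\colon V\to\R^p$ is ``roughly a submersion'', so that $V$ is a locally trivial fibration over $\R^p\setminus\mathcal{N}_1$. But Property~\ref{property:special0}(1) says only that $\mathcal{M}_1$ contains the $Q$-singular points and the $Q$-critical points of $\pi_1$, not of $\pi_{[1,p]}$. The critical locus of $\pi_{[1,p]}$ on $V$ can be positive-dimensional (it is exactly $W_y^0$ in the special construction of Section~\ref{sec:lowspecial}, but there is no reason for its image in $\R^p$ to be the finite set $\mathcal{N}_1$), so your deformation of a path into $W$ by ``following $W_{\pi_{[1,p]}(\gamma(t))}$ continuously'' is not justified. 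The step you flag as the ``main obstacle'' is in fact not provable from the hypotheses. Likewise, the outer induction on $k-p$ has no foothold: the statement concerns a single fixed tuple $(V,p,W,\ldots)$, and nothing in Property~\ref{property:special0} produces a smaller instance to recurse on.

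The paper's proof takes an entirely different route: it is a sweeping argument along the \emph{single} coordinate $X_1$. One proves, by induction on $a\in\R$ through the finite set $\mathcal{D}=\mathcal{D}_1\cup\mathcal{D}_2$, the statement $\mathbf{P}(a)$: every semi-algebraically connected component of $V_{\le a}$ meets $\mathcal{S}$ in a non-empty semi-algebraically connected set. Two lemmas do the work: Lemma~\ref{lemmafora} (if $(a,b]\cap\mathcal{D}_1=\emptyset$ then $C_{\le a}$ is connected for $C$ a component of $V_{\le b}$) and Lemma~\ref{lemmaforb} (describing how components of $C_{<b}$ glue up at level $b$, with the gluing locus contained in $\mathcal{M}_1$). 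Property~(3), which is stated for $\pi_1$-slices of $W$, is used exactly to move along $W$ in the $X_1$-direction between consecutive values of $\mathcal{D}_2$; Property~(2) is invoked pointwise (via an infinitesimal $\eps$ and the curve selection lemma) to attach a given $x\in V_y\cap\mathcal{S}$ to a nearby point of $W$. The projection $\pi_{[1,p]}$ enters only through the definition of $\mathcal{S}$, never as the organizing projection for the Morse-type argument.
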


\begin{remark}
  In order to understand the situation, the following example of a
  tuple satisfying Property \ref{property:special0} can be useful:
\begin{enumerate}
 \item the torus $V\subset \R^3$ defined as the set of zeros of the
   equation
 $$Q=36(X_1^2+\left(\frac{12
   X_2+5X_3}{13}\right)^2)-
(X_1^2+X_2^2+X_3^2+8)^2,$$ (\cite{BCR}, page
 40, figure 2.5), $p=1$, the four critical points $\mathcal{M} \subset V$
 of the map $\pi_1$ restricted to $V$;
\item the silhouette $W^{(1)} \subset V$ defined by $$Q=\frac{\partial
  Q}{\partial X_3}=0;$$
\item the six critical values $\mathcal{D}^{(1)} \subset \R$ of the map
  $\pi_1$ restricted to the silhouette $W$, and the intersection
  $\mathcal{M}^{(1)}$ of the corresponding six fibers with the silhouette
  $W^{(1)}$.
\end{enumerate}
The tuple
$$ \left(V,
 \mathcal{M},
W^{(1)},\mathcal{M}^{(1)},  \mathcal{D}^{(1)} \right)
$$ 
satisfies Property \ref{property:special0}. 

Finally, ${\mathcal{S}}$ is the union of the silhouette and the
intersection of the torus with the six curves which are the fibers of
$V$ at the distinguished values $\mathcal{D}^{(1)}$.
\end{remark}

\medskip 

The rest of this section is devoted to prove
Proposition~\ref{prop:connectivity}. We need preliminaries about
non-archimedean extensions of the base real closed field~$\R$.

\begin{remark}\label{rem:tarski-seidenberg}
  A typical non-archimedean extension of $\R$ is the field
  $\R\la\eps\ra$ of algebraic Puiseux series with coefficients in
  $\R$, which coincide with the germs of s-a continuous
  functions (see \cite[Chapter 2, Section 6 and Chapter 3, Section
    3]{BPRbook2}).  An element $x\in \R\la \eps\ra$ is bounded over
  $\R$ if $\vert x \vert \le r$ for some $0\le r \in \R$.  The subring
  $\R\la\eps\ra_b$ of elements of $\R\la\eps\ra$ bounded over $\R$
  consists of the Puiseux series with non-negative exponents.  We
  denote by $\lim_{\varepsilon}$ the ring homomorphism from~$\R
  \langle \varepsilon \rangle_b$ to $\R$ which maps $\sum_{i \in
    \mathbb{N}} a_i \varepsilon^{i / q}$ to $a_0$. So, the mapping
  $\lim_{\varepsilon}$ simply replaces $\varepsilon$ by $0$ in a
  bounded Puiseux series.  Given $S\subset \R\la \eps \ra^k$, we
  denote by $\lim_\eps(S)\subset \R^k$ the image by $\lim_\eps$ of the
  elements of $S$ whose coordinates are bounded over $\R$.

  More generally, let $\R'$ be a real closed field extension of $\R$.
  If $S\subset \R^ k$ is a s-a set, defined by a boolean
  formula $\Phi$ with coefficients in $\R$, we denote by $\Ext(S,\R')$
  the extension of $S$ to $\R'$, i.e. the s-a subset of
  $\R'^k$ defined by $\Phi$.  The first property of $\Ext(S,\R')$ is
  that it is well defined, i.e. independent of the formula $\Phi$
  describing $S$ \cite[Proposition 2.87]{BPRbook2}.  Many properties
  of $S$ can be transferred to $\Ext(S,\R')$: for example $S$ is
  non-empty if and only if $\Ext(S,\R')$ is non-empty;  also $S$ is
  s-a connected if and only if $\Ext(S,\R')$ is
  s-a connected \cite[Proposition 5.24]{BPRbook2}.

  Moreover, if Property \ref{property:special0} (2) holds for $V,W$,
  i.e. for every $y \in \R^{p}$, $W_y$ is a finite set of points
  having non-empty intersection with every s-a
  connected component of $V_y$, then Property \ref{property:special0}
  (2) holds for $\Ext(V,\R'),\Ext(W,\R')$, i.e for each $y' \in
  \R'^{p}$, $\Ext(W,\R')_{y'}$ is a finite set of points having
  non-empty intersection with every s-a connected
  component of $\Ext(V,\R')_{y'}$.  Indeed, by Hardt's s-a
  triviality theorem \cite[Theorem 5.45]{BPRbook2}, one can find a
  finite partition of $\R^{p}$ into s-a sets $T_i$,
  $i=1,\ldots,r$, a finite partition of $V_{T_i}$ into s-a
  sets $S_{i,j}$ and an integer $n_i>0$ such that $S_{i,j}$ is
  s-a homeomorphic to $T_i\times (S_{i,j})_{y_i}$ for
  some $y_i\in T_i$, and for all $y\in T_i$, the s-a
  connected components of $V_y$ are $(S_{i,j})_y$ and $W_y$ has $n_i$
  points. By Tarski-Seidenberg's transfer principle \cite[Theorem
    2.80]{BPRbook2}, $\Ext(S_{i,j},\R')$ is s-a
  homeomorphic to $\Ext(T_i,\R')\times \Ext(S_{i,j},\R')_{y_i}$, and
  for all $y'\in \R'^{p}$, there exists $i$ such that $y'\in
  \Ext(T_i,\R')$, the sets $\Ext(S_{i,j},\R')_{y'}$ are the
  s-a connected components of $\Ext(V,\R')_{y'}$ and
  the intersection of $\Ext(W,\R')_{y'}$ and $\Ext(S_{i,j},\R')_{y'}$
  has exactly $n_i$ points.
\end{remark}

Let $V$ be the union
of a subset of the bounded s-a connected components of an
algebraic set $\ZZ(Q,\R^k)\subset \R^k$.  Suppose also that the set
$\mathcal{M}$ of points which are singular points of $\ZZ(Q,\R^k)$ or critical points
of $\pi_1$ on $\ZZ(Q,\R^k)$ and which belong to $V$ is finite. 
We now prove two preliminary results (Lemma \ref{lem:forb} and Lemma \ref{lem:fora} below)
about the pair $(V,\mathcal{M})$ which will be needed in the proof of 
Proposition \ref{prop:connectivity}.

In this paper a {\em s-a path} is a s-a
continuous function $\gamma$ from a closed interval $[a,b]\subset \R$
to $\R^k$.  Note that a s-a set is s-a
connected if and only if it is s-a path connected
\cite[Theorem 5.23]{BPRbook2}.

\begin{lemma}
\label{lem:forb}
Let $C$ be a s-a connected component of $V_{\le b}$ 
such that $C \cap V_b$ is not empty.
\begin{enumerate}
\item  
If $\dim(C) = 0$, $C$ is a point contained in $\mathcal{M}$.
\item  
If $\dim(C) \not= 0$, $C_{<b}$ is non-empty.  Let $B_1,\ldots, B_s$ be
the s-a connected components of $C_{<b}$.  Then,
\begin{enumerate}
\item  
for each $i, 1 \leq i \leq s$, $\overline{B_i}\cap \mathcal{M}\not =
\emptyset$;
\item 
if there exist $i,j, 1 \leq i<j \leq s$ such that $\overline{B_i}\cap
\overline{B_j}\not = \emptyset$, then $\overline{B_i}\cap
\overline{B_j}\subset \mathcal{M}$;
\item  
$\cup_{i=1}^s \overline{B_i}=C$, and hence $\cup_{i=1}^s
  \overline{B_i}$ is s-a connected.
 \end{enumerate}
\end{enumerate}
 \end{lemma}
\begin{proof}
Part 1 follows immediately from \cite[Proposition
  7.3]{BPRbook2posted2}.  We prove Part 2.  Since $\mathcal{M}$
is finite, there is a non-singular point 
$x \in C_b$ 
which is
non-critical for $\pi_1$ on $V$. Let $T_x V$ denote the tangent space
to $V$ at $x$.  So $T_x V$ is not orthogonal to the $X_1$ axis, and
the s-a implicit function theorem~\cite[Theorem
  3.25]{BPRbook2} implies that $C_{<b}$ is non-empty.

Part 2) a) and 2 b) are immediate consequences of Proposition 7.3 in
\cite{BPRbook2posted2}.

We prove 2) c). Clearly, $\cup_{i=1}^s \overline{B_i}\subset C$.
Moreover since $C_{<b}$ is non-empty, $\cup_{i=1}^s \overline{B_i}$ is also non-empty.
Suppose that $x \in A=C \setminus \cup_{i=1}^s \overline{B_i}$.  
For $r> 0$ sufficiently small, 
$\mathcal{B}_k(x,r) \cap C_{<b}=\emptyset$ (where
$\mathcal{B}_k(x,r)$ is the $k$-dimensional open ball of center $x$ and radius
$r$).  Note that $\pi_1(x) = b$, since otherwise $x$ belongs to
$C_{<b}$, and thus to one of the $B_i$'s.

Applying \cite[Proposition 7.3]{BPRbook2posted2}, we deduce from the
fact that $\mathcal{B}_k(x,r)\cap C_{< b}=\mathcal{B}_k(x,r)_{< b}
\cap C=\emptyset$ that $x$ is either a $Q$-singular point, or a
$Q$-critical point of $\pi_1$ on $V$. In other words $x \in
\mathcal{M}$. But since by assumption $\mathcal{M}$ is finite, this
implies that $A$ is a finite
set and is closed.  Since $C$ is s-a connected, and $\cup_{i=1}^s \overline{B_i}$ closed and non-empty, $A$ must be empty. 
\end{proof}

\begin{lemma}
\label{lem:fora}
Suppose that $b \not \in \pi_1(\mathcal{M})$.  Let $C$ be a s-a
connected component of $V_{\leq b}$.  If $a<b$ and $(a,b] \cap
\pi_1(\mathcal{M})$ is empty, then $C_{\leq a}$ is a s-a connected
component of $V_{\leq a}$.
\end{lemma}

\begin{proof} 
We first prove that $C_{\leq a}$ is non-empty. 
Since $C$ is non-empty because it is a 
s-a connected component of $V_{\leq b}$, there must exist 
$a'\in\R$, with $a'\leq b$, such that $C_{\leq a'}$ is non-empty, but
$C_{\leq a''}$ is empty for all $a''<a'$. In this case, 
using Lemma \ref{lem:forb},
$a' \in \pi_1(\mathcal{M})$,
since $(a,b]\cap \pi_1(\mathcal{M})$ is empty, $a'\le a$ and $C_{\leq a}$ is non-empty.

We now show that $C_{\leq a}$ is s-a connected. 
This together
with the fact shown above implies that $C_{\leq a}$ 
a s-a connected component of $V_{\leq a}$.
Let $x$ and $y$ be two points of $C_{\leq a}$ and $\gamma: [0,1] \to C$ be a
s-a path connecting $x$ to $y$ inside $C$.  We want to
prove that there is a s-a path connecting $x$ to $y$ inside
$C_{\le a}$.
 
If ${\rm Im}(\gamma) \subset C_{\le a}$, there is nothing to prove.

If ${\rm Im}(\gamma) \not\subset C_{\le a}$, 
then there exists $c\in \R$ such that for all $d\in\R$ such that
$a<d<c$, 
$ {\rm Im}(\gamma)\cap {\rm Zer} (Q)_{d} \not=\emptyset$. 

Let $\eps$ be a positive infinitesimal.
Then $$\Ext(\gamma([0,1]),\R\la \eps \ra) \cap {\rm Zer} (Q, \R\la
\eps \ra^k)_{a+\eps} \not= \emptyset$$ using \cite[Proposition
  3.17]{BPRbook2} .  Since $$\{u\in [0,1]\subset \R\la \eps\ra \mid
\Ext(\gamma,\R\la \eps \ra)(u)\in {\rm Zer} (Q, \R\la \eps \ra^k)_{<
  a+\eps}\}$$ and $$\{u\in [0,1]\subset \R\la \eps\ra \mid
\Ext(\gamma,\R\la \eps \ra)(u)\in {\rm Zer} (Q, \R\la \eps
\ra^k)_{[a+\eps, b]}\}$$ are s-a subsets of $[0,1]\subset
\R\la \eps \ra$,  there exists by \cite[Corollary 2.79]{BPRbook2} a
finite partition $\mathfrak{P}$ of $[0,1]\subset \R\la \eps \ra$
such that for each open interval $(u,v)$ of $\mathfrak{P}$,
$\Ext(\gamma,\R\la \eps \ra)(u,v)$ is either contained in the set ${\rm Zer}
(Q, R\la \eps \ra^k)_{< a+\eps}$, or in the set ${\rm Zer} (Q, \R\la \eps
\ra^k)_{[a+\eps, b]}$, with $\gamma(u)$ and $\gamma(v)$ in
$\Ext(C,\R\la\eps\ra)_{a+\varepsilon}$.
    
If $\Ext(\gamma,\R\la \eps \ra)(u,v)$ is contained in ${\rm Zer} (Q,
\R\la \eps \ra^k)_{[a+\eps, b]}$, we can replace $\gamma$ by a
s-a path $\gamma'_{[a,b]}$ connecting $\gamma(u)$ to
$\gamma(v)$ inside 
$\Ext(C,\R\la\eps\ra)_{a+\eps}$.  
Note that there is no
$Q$-critical point of $\pi_1$ in $\Ext(V,
\R\la\varepsilon\ra)_{[a+\varepsilon, b]}$ and $\Ext(V,
\R\la\varepsilon\ra)_{[a+\eps, b]}$ contains no $Q$-singular point by
\cite[Proposition 3.17]{BPRbook2} while $\Ext(V, \R\la\eps\ra)\subset
\ZZ(Q, \R\la\eps\ra^k)$ by \cite[Proposition 2.87]{BPRbook2} .
    
By ~\cite[Proposition 15.1 b]{BPRbook2posted2},  if $C'$ is a s-a
connected component of 
$\Ext(V, \R\la\eps\ra)_{[a+\eps, b]}$,
$C'_{a+\eps}$ is a s-a connected component of $\Ext(V,
\R\la\eps\ra)_{a+\eps}$.

Construct a s-a path $\gamma'$ from $x$ to $x'$ inside
$\Ext(C,\R\la\eps\ra)_{\le a+\eps}$, 
obtained by concatenating pieces of $\gamma$ inside
${\rm Zer} (Q, \R\la \eps \ra^k)_{< a+\eps}$ and the paths
$\gamma'_{(u,v)}$ connecting $\gamma(u)$ to $\gamma(v)$ for $(u,v)$
such that $$\Ext(\gamma,\R\la \eps \ra)(u,v)\subset \Ext(V,
\R\la\eps\ra)_{[a+\eps, b]}.$$ Note that such a s-a
connected path $\gamma'$ is closed and bounded.  Applying
\cite[Proposition 12.43]{BPRbook2}, ${\rm lim}_\eps(\gamma'([0,1]))$
is s-a connected, contains $x$ and $x'$ and is
contained in 
${\rm lim}_\eps(\Ext(C,\R\la\eps\ra)_{\le a+\eps})=C_{\le a}$. 
This proves the lemma.
\end{proof}

\begin{notation}
\label{not:ccofx}
If $S \subset \R^k$ is s-a set and $x \in S$, then we denote by 
$\CC(S,x)$ the s-a connected component of $S$ 
containing $x$.
\end{notation}

We are now ready to prove Proposition \ref{prop:connectivity}.

\begin{proof}[Proof of Proposition \ref{prop:connectivity}]
For every $a \in \R$, we say that property ${\bf P}(a)$ holds if:
for any s-a connected component $C$ of $V_{\le a}$, 
$C \cap \mathcal{S}$ is s-a connected.

We prove that for all $a$ in $\R$, the property ${\bf P}(a)$ holds. Since $V$
is bounded, the proposition follows from the property ${\bf P}(a)$ for any $a \ge
\max_{x\in V} \pi_{1}(x)$.

Let $\mathcal{D}=
\pi_{1}(\mathcal{M} \cup \mathcal{M}^{(p)})$.

The proof uses two intermediate results:

\noindent {\bf Step 1}: For every $a\in {\mathcal{D}}$, 
property
${\bf P}(a)$
implies 
property
${\bf P}(b)$ if
$b \in \R$ with $(a,b]\cap {\mathcal{D}}=\emptyset$.

\noindent {\bf Step 2}: For every $b\in {\mathcal{D}}$, if 
property
${\bf P}(a)$ holds for all $a < b$, then 
property
${\bf P}(b)$ holds.

Since for $a < \min_{x\in V} \pi_{1}(x)$, the property ${\bf P}(a)$ holds
vacuously, and the combination of these two results gives by an easy
induction 
the property
${\bf P}(a)$ for all $a$ in $\R$.

We now prove the two steps.

\noindent {\bf Step 1.}  
We suppose that $a\in {\mathcal{D}}$, and that 
the property
${\bf P}(a)$ holds. Take $b \in
\R$, $a<b$ with $(a,b]\cap {\mathcal{D}}=\emptyset$ and prove that
the property
  ${\bf P}(b)$ holds.  Let $C$ be a s-a connected
  component of $V_{\leq b}$. We have to prove that $ C \cap
  \mathcal{S}$ is 
non-empty and 
s-a connected.

Since $(a,b]\cap {\mathcal{D}}=\emptyset$, it follows that
$\mathcal{M}_{(a,b]} = \emptyset$, and $C_{\le a}$ is a s-a connected
component of $V_{\le a}$ using Lemma \ref{lem:fora}.  So, using
property ${\bf P}(a)$, we see that $C_{\le a} \cap \mathcal{S}$ is
non-empty and s-a connected.

If $C_{\le a}\cap \mathcal{S}=C\cap \mathcal{S}$, there is nothing to
prove.  Otherwise, let $x \in C \cap \mathcal{S}$ such that $x \not
\in C_{\le a}$.  We prove that $x$ can be s-a connected
to a point in $C_{\le a}\cap \mathcal{S}$ by a s-a path in
$C \cap \mathcal{S}$, which is enough to prove that $C \cap
\mathcal{S}$ is s-a connected.

Since $\pi_1(x)\in (a,b]$ and $(a,b]\cap \mathcal{D}=\emptyset$, we
deduce that $\pi_1(x)\not \in \mathcal{D}$ and $x\not \in
V_{\mathcal{N}\cup \mathcal{N}^{(p)}}$. So, from $x \in \mathcal{S}$,
we get $x \in W^{(p)}$.  We note that $\CC(W^{(p)}_{[a,b]}, x) \subset
C$.  By Property \ref{property:special0} (3) applied to
$\CC(W^{(p)}_{[a,b]}, x)$ (noting that $(a,b] \cap \mathcal{D}_2
\subset (a,b] \cap \mathcal{D} = \emptyset$) we have that $a \in
\pi_{1}(\CC(W^{(p)}_{[a,b]}, x))$ and $\CC(W^{(p)}_{[a,b]}, x)_{a}$ is
non-empty. Hence there exists a s-a path connecting $x$ to a point in
$\CC(W^{(p)}_{[a,b]}, x)_{a}$ inside $\CC(W^{(p)}_{[a,b]}, x)$.  Since
$\CC(W^{(p)}_{[a,b]}, x) \subset W^{(p)}\subset \mathcal{S}$ and
$\CC(W^{(p)}_{[a,b]}, x)\subset C$, it follows that $
\CC(W^{(p)}_{[a,b]}, x) \subset C \cap \mathcal{S}$ and we are done.

\noindent {\bf Step 2.}  We suppose that $b \in {\mathcal{D}}$, and
that the property
${\bf P}(a)$ holds for all $a < b$. We prove that 
the property
${\bf P}(b)$ holds.

Let $C$ be a s-a connected component of $V_{\le b}$.
If $C_b = \emptyset$, there is nothing to prove.  Suppose that $C_b$ is
non-empty; we have to prove that $C\cap \mathcal{S}$ is
s-a connected.

If $\dim(C) = 0$, $C$ is a point, belonging to $\mathcal{M}\subset
\mathcal{S}$ by Lemma \ref{lem:forb}.  So $C \cap \mathcal{S}$ is
s-a connected.

Hence, we can assume that $\dim(C) > 0$, so that $C_{<b}$ is non-empty
by Lemma \ref{lem:forb}.

Our aim is to prove that $C\cap \mathcal{S}$ is s-a connected.  We do
this in two steps. We prove the following statements:
\begin{enumerate}
\item[{\bf(a)}] If $B$ is a s-a connected component of $C_{<b}$, then
  $\overline{B} \cap \mathcal{S}$ is s-a connected,
  \item[]   and, using {\bf (a)},
  \item[{\bf (b)}] $C\cap \mathcal{S}$ is non-empty and s-a connected.
\end{enumerate}

\noindent {\bf Proof of (a)} We prove that if $B$ is a
s-a connected component of  
$C_{<b}$, 
then $\overline{B}
\cap \mathcal{S}$ is s-a connected.

Since 
$\overline{B}$ 
contains a point of $\mathcal{M}$ it follows
that $\overline{B}\cap \mathcal{S}$ is not empty.

Note that if $\overline{B}\cap \mathcal{S}=B\cap \mathcal{S}$, then
there exists $a$ with
$$
\max(\{\pi_1(x)\mid x \in B \cap \mathcal{S}\})<a<b,
$$
with $B\cap \mathcal{S}=(B\cap \mathcal{S})_{\le a}$ and
$B_{\le a}$ s-a connected using Lemma \ref{lem:fora}.
So $B\cap \mathcal{S}$  is s-a connected 
since 
the property
${\bf P}(a)$ holds.

We now suppose that $(\overline{B}\setminus B)\cap \mathcal{S}$ is
non-empty.  Taking $x \in (\overline{B}\setminus B)\cap \mathcal{S}$,
we are going to show that $x$ can be connected to a point $z$ in $B
\cap \mathcal{S}$ by a s-a path $\gamma$ inside
$\overline{B}\cap \mathcal{S}$. Notice that $\pi_1(x)=b$.

We first prove that we can assume without loss of generality that $x
\in W^{(p)}$.  Otherwise, since $x \in \mathcal{S}$ and ${\mathcal{S}} = W^{(p)}
\cup V_{\mathcal{N}\cup\mathcal{N}^{(p)}}$, we must have that $x \in
V_y$ with $y=\pi_{[1,p]}(x)$, and $V_y \subset \mathcal{S}$.  Let $A =
\CC(V_y \cap \overline{B},x)$.  We now prove that $A \cap W^{(p)}_y \neq
\emptyset$.  Using the Curve Selection Lemma 
\cite[Theorem 2.5.5]{BCR}
 choose a s-a
path $\gamma: [0,\eps] \rightarrow \Ext(\overline{B},\R\la\eps\ra)$
such that $\gamma(0) = x$, $\lim_\eps \gamma(\eps) = x$ and
$\gamma((0,\eps]) \subset \Ext(B,\R\la\eps\ra)$.  Let $y_\eps =
  \pi_{[1,p]}(\gamma(\eps))$ and
\[
A(\eps) = \CC(\Ext(B,\R\la\eps\ra)_{y(\eps)},\gamma(\eps)).
\] 
Note that $x \in \lim_\eps A(\eps) \subset A$.

By Remark \ref{rem:tarski-seidenberg}, $\Ext(B, \R\la\eps\ra)$ is a
s-a connected component of  
$\Ext(V_{<b}, \R\la\eps\ra)$
which implies that $A(\eps)$ is a s-a connected
component of $\Ext(V, \R\la\eps\ra)_{y(\eps)}$.  
By Property \ref{property:special0} (2) and Remark
\ref{rem:tarski-seidenberg}, $\Ext(W^{(p)},\R\la\eps\ra)_{y(\eps)}
\cap A(\eps) \neq \emptyset$.  Then, since
$\Ext(W^{(p)},\R\la\eps\ra)_{y(\eps)} \cap A(\eps)$ is bounded over
$\R$, we deduce that 
\[
\lim_\eps (\Ext(W^{(p)},\R\la\eps\ra)_{y(\eps)} \cap A(\eps))
\]
is a non-empty subset of $W^{(p)}_y \cap A$.

Now connect $x$ to a point  $x' \in W^{(p)}_y$ by a s-a path
whose image is contained in $A\subset \overline{B}_y\subset
(\overline{B}\setminus B)\cap \mathcal{S}$.  Thus, replacing $x$ by
$x'$ if necessary we can assume that $x \in W^{(p)}$ as 
claimed.

There are four cases, namely
\begin{enumerate}
\item
$x \in \mathcal{M} \cup \mathcal{M}^{(p)}$;
\item
$x \not\in \mathcal{M} \cup \mathcal{M}^{(p)}$ and 
$\CC(W^{(p)}_b,x) \not\subset \overline{B}$;
\item
$x \not\in \mathcal{M} \cup \mathcal{M}^{(p)}$,
$\CC(W^{(p)}_b,x) \subset \overline{B}$ and $b \in \mathcal{D}^{(p)}$;
\item
$x \not\in \mathcal{M} \cup \mathcal{M}^{(p)}$,
$\CC(W^{(p)}_b,x) \subset \overline{B}$ and $b \not\in \mathcal{D}^{(p)}$;
\end{enumerate}
that we consider now.

\begin{enumerate}
\item $x \in \mathcal{M} \cup \mathcal{M}^{(p)}$: \\ Define
  $y=\pi_{[1,p]}(x)\in \R^{p}$, and note that $V_y\subset
  \mathcal{S}$.  Since $x \in \overline{B}$, 
  and $B$ is bounded,
  $y\in
  \pi_{[1,p]}(\overline{B})= \overline{\pi_{[1,p]}(B)}$.  Now let
  $\eps > 0$ be an infinitesimal.  By applying the Curve Selection
  Lemma \cite[Theorem 2.5.5]{BCR} to the set $B$ and $x \in \overline{B}$, and then projecting
  to $\R^{p}$ using $\pi_{[1,p]}$ we obtain that there exists $y(\eps)
  \in \R\langle\eps\rangle^{p}$ infinitesimally close to $y$ with
$\pi_1(y(\eps)) < \pi_1(y)=b$, 
and $x \in
  \lim_\eps\Ext(V,\R\langle\eps\rangle)_{y(\eps)}$. 
  Let $x(\eps) \in \Ext(V,\R\langle\eps\rangle)_{y(\eps)}$ be such that 
$\lim x(\eps)=x$.
  Moreover, by
  Property \ref{property:special0} (2) and Remark
  \ref{rem:tarski-seidenberg} we have that
  $\Ext(W^{(p)},\R\langle\eps\rangle)_{y(\eps)}$ is non-empty and meets every
  s-a connected component of
  $\Ext(V,\R\langle\eps\rangle)_{y(\eps)}$. 
Note that
\[
\Ext(V,\R\la\eps\ra)_{y(\eps)} = \Ext(V_{\leq b},\R\la\eps\ra)_{y(\eps)} = \Ext(V_{< b},\R\la\eps\ra)_{y(\eps)}.
\]
Let
\[
x'(\eps) \in \Ext(W^{(p)},\R\langle\eps\rangle)_{y(\eps)}\cap
\CC(\Ext(B,\R\la\eps\ra)_{y(\eps)},x(\eps)),
\]
and   
$x' = \lim_\eps x'(\eps)$.  
Since 
$\lim_\eps x(\eps)=x$
and
$\lim_\eps \;\CC(\Ext(B,\R\la\eps\ra)_{y(\eps)},x(\eps))$ is
s-a connected,
$$ \lim_\eps \CC(\Ext(B,\R\la\eps\ra)_{y(\eps)},x(\eps))
\subset\CC(\overline{B}_y,x).
$$ Now choose a s-a path $\gamma_1$ connecting $x$ to $x'$
inside $\CC(\overline{B}_y,x)$ (and hence inside $\mathcal{S}$ since $
\CC(\overline{B}_y,x) \subset V_y\subset \mathcal{S}$).  Since
$x'(\eps)$ has coordinates which are algebraic Puiseux
series in $\eps$, there exists a positive element
$t_0 \in \R$, and a s-a curve $\gamma_2:[0,t_0] \rightarrow \R^k$
defined over $\R$, such that $x' = \gamma_2(0)$, and $x'(\eps) \in
\Ext(\gamma_2,\R\la\eps\ra)(\eps)$ (see \cite[Theorem
3.14]{BPRbook2}).
 
The concatenation of
$\Ext(\gamma_1,\R\la\eps\ra),\Ext(\gamma_{2},\R\la\eps\ra)|_{[0,\eps]}$
gives a s-a path $\gamma$
having the required property, after replacing $\eps$ 
by a 
sufficiently small 
positive element of $\R$.

\item $x \not\in \mathcal{M} \cup \mathcal{M}^{(p)}$ and
  $\CC(W^{(p)}_b,x) \not\subset \overline{B}$: \\ There exists $x' \in
  \CC(W^{(p)}_b,x)$, $x' \not \in \overline{B}$ and a s-a path
  $\gamma:[0,1]\rightarrow \CC(W^{(p)}_b,x)$, with $\gamma(0) = x,
  \gamma(1) = x'$. Since $x' \not\in \overline{B}$, it follows that
  for $t_1 = \max \{0 \leq t < 1\;\mid\; \gamma(t) \in
  \overline{B}\}$, $\gamma(t_1) \in \mathcal{M}$.  To see this observe
  that by Lemma \ref{lem:forb} (2c), it follows that
  $\gamma(t_1) \in \overline{B}\cap\overline{B'}$, where $B'$ is a s-a
  connected component of $C_{<b}$ distinct from $B$.  It then follows
  from Lemma \ref{lem:forb} (2b) that $\gamma(t_1)
  \in \mathcal{M}$.  We can now connect $\gamma(t_1)$ to a point in $B
  \cap \mathcal{S}$ by a s-a path inside $\overline{B}\cap
  \mathcal{S}$ using (1).
\item
$x \not\in \mathcal{M} \cup \mathcal{M}^{(p)}$, $\CC(W^{(p)}_b,x) \subset
  \overline{B}$ and $b \in \mathcal{D}^{(p)}$: \\ Since $b \in
  \mathcal{D}^{(p)}$ by Property \ref{property:special0}  
(3)
there exists
  $x' \in \CC(W^{(p)}_b,x) \cap \mathcal{M}^{(p)}$. Thus, there exists a
  s-a path connecting $x$ to $x' \in \mathcal{M}^{(p)}$ with
  image contained in $\overline{B} \cap W^{(p)} \subset \overline{B} \cap
  \mathcal{S}$.  We can now connect $x'$ to a point in $B \cap
  \mathcal{S}$ by a s-a path inside $\overline{B}\cap
  \mathcal{S}$ using (1).

\item
$x \not\in \mathcal{M} \cup \mathcal{M}^{(p)}$, $\CC(W^{(p)}_b,x) \subset
  \overline{B}$ and $b \not\in \mathcal{D}^{(p)}$: \\ Since $b \not\in
  \mathcal{D}^{(p)}$, for all $a < b$ such that $[a,b] \cap \mathcal{D}^{(p)}
  = \emptyset$, $\CC(W^{(p)}_{[a,b]},x)_b = \CC(W^{(p)}_b,x)$ and
  $\CC(W^{(p)}_{[a,b]},x)_a \neq \emptyset$ by Property
  \ref{property:special0} (3).  
  Let $x'\in \CC(W^{(p)}_{[a,b]},x)_a$.
  We can choose a s-a path
  $\gamma: [0,1] \rightarrow \CC(W^{(p)}_{[a,b]},x)$ with $\gamma(0) = x,
  \gamma(1) = x'$. Let 
  \[
  t_1 = \max \{0 \leq t < 1 ];\mid\;\gamma(t) \in W^{(p)}_b\}.
  \]  
  Then, either $\gamma(t_1) \in \mathcal{M}$ and we can
  connect $\gamma(t_1)$ to a point in $B \cap \mathcal{S}$ by a
  s-a path inside $\overline{B}\cap \mathcal{S}$ using
  (1). Otherwise, by Lemma \ref{lem:forb} (2 b), for all small enough
  $r >0$, $\mathcal{B}_k(\gamma(t_1),r) \cap C_{<b}$ is non-empty and contained
  in $B$. Then, there exists $t_2 \in (t_1,1]$ such that $\gamma(t_2)
    \in B\cap W^{(p)} \subset B \cap \mathcal{S}$, and the s-a
    path $\gamma|_{[0,t_2]}$ gives us the required path in this case.
\end{enumerate}

Take $x$ and $x'$ in $\overline{B}\cap \mathcal{S}$.
They can be  connected to points $z$ and $z'$ in $B
\cap \mathcal{S}$ by s-a paths $\gamma$ and $\gamma'$ inside
$\overline{B}\cap \mathcal{S}$ such that, without loss of generality, 
$\pi_1(z)=\pi_1(z')=a$
with $a<b$, and $b-a$ arbitrarily small. Since $B$ is a s-a connected
component of $C_{< b}$, it follows from 
Hardt's s-a triviality theorem \cite[Theorem 5.45]{BPRbook2}
that 
for all $a<b$ with $b-a$ sufficiently small, $B_{\leq a}$ is non-empty and connected,
and hence $B_{\leq a}$ is a s-a connected component of $C_{\leq a}$.
Now, using 
property
${\bf P}(a)$, we conclude that 
property
${\bf P}(b)$
holds.

\noindent {\bf Proof of (b)} We have to prove that $C\cap \mathcal{S}$
is 
non-empty and 
s-a connected.

Since we suppose that $\dim(C)>0$, $C_{<b}$ is non-empty by Lemma
\ref{lem:forb} (2). Let $B$ be a s-a connected component of 
$C_{<b}$. We have from {\bf (a)} that $\overline{B}\cap \mathcal{S}$ is non-empty,
and since $\overline{B}\subset C$, it follows that $C \cap \mathcal{S}$ is not empty. 

We now prove that $C \cap \mathcal{S}$ is s-a connected.
Let $x$ and $x'$ be in $C \cap \mathcal{S}$.  We prove that it is
possible to connect them by a s-a path inside $C \cap
\mathcal{S}$.
Using Lemma \ref{lem:forb} (2c), let $B_i$
(resp. $B_j$) be a s-a connected component of $C_{<b}$
such that $x \in \overline{B_i}$ (resp. $x' \in \overline{B_j}$).

If $i=j$, $x$ and $x'$ both lie in $\overline{B}_i\cap \mathcal{S}$
which is s-a connected by {\bf (a)}. Hence, they can be
connected by a s-a connected path in
$\overline{B}_i\cap\mathcal{S}\subset C\cap\mathcal{S}$.

So let us suppose that $i\not=j$. Note that:
\begin{itemize}
\item by Lemma \ref{lem:forb} (2a), $\overline{B_i} \cap
  \mathcal{M}$ and $\overline{B_j} \cap \mathcal{M}$ are not
  empty,
\item by {\bf (a)} $\overline{B_i}\cap \mathcal{S}$ and $\overline{B_j}\cap
  \mathcal{S}$ are s-a connected,
\item by definition of $\mathcal{S}$, $\mathcal{M} \subset
  \mathcal{S}$.
\end{itemize}
Then, one can connect $x$ (resp. $x'$) to a point in
$\overline{B}_i\cap\mathcal{M}$
(resp. $\overline{B}_j\cap\mathcal{M}$).  This shows that one can
suppose without loss of generality that $x\in
\overline{B}_i\cap\mathcal{M}$ and $x'\in
\overline{B}_j\cap\mathcal{M}$.

Let $\gamma:[0,1]\to C$ be a s-a path that connects $x$ to
$x'$, and let $G=\gamma^{-1}(C \cap \mathcal{M})$ and
$H=[0,1]\setminus G$.

Since $\mathcal{M}$ is finite, we can assume without loss of
generality that $G$ is a finite set of points, and $H$ is a union of a
finite number of open 
or half-open
intervals.

Since $\gamma(G) \subset \mathcal{M} \subset \mathcal{S}$, it
suffices to prove that if $t$ and $t'$ are the end points of an
interval in $H$, then $\gamma(t)$ and $\gamma(t')$ are connected by a
s-a path inside $C \cap \mathcal{S}$.

Notice that $\gamma((t,t')) \cap \mathcal{M}=\emptyset$, so that
$\gamma(t)$ and $\gamma(t')$ belong to the same $\overline{B}_\ell$ by
Lemma \ref{lem:forb} (2b). Recall now that $\gamma(t)$ and
$\gamma(t')$ both lie in $\overline{B}_\ell\cap\mathcal{S}$ and that
$\overline{B}_\ell\cap\mathcal{S}$ is s-a connected by
{\bf (a)}.  Consequently, $\gamma(t)$ and $\gamma(t')$ can be connected by a
s-a path in $\overline{B}_\ell\cap\mathcal{S}\subset
C\cap\mathcal{S}$.
\end{proof}

We are going to need the following corollary.

\begin{corollary}
\label{cor:connectivity}
Let
$$
\displaylines{
\left(V,
\mathcal{M},
W^{(p)},\mathcal{M}^{(p)},\mathcal{D}^{(p)}
\right)
}
$$ satisfy Property
\ref{property:special0}, 
$$
\mathcal{N}=\pi_{[1,p]}(\mathcal{M}),
\mathcal{N}^{(p)}=\pi_{[1,p]}(\mathcal{M}^{(p)}),
$$
and $\mathcal{N}'\subset \R^{p}$ a finite set containing
$\mathcal{N}\cup\mathcal{N}^{(p)}$.  For every s-a
connected component $C$ of $V$,
$$C \cap (W^{(p)} \cup V_{\mathcal{N}'})$$ is 
s-a connected.
\end{corollary}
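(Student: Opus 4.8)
The plan is to deduce the statement from Proposition~\ref{prop:connectivity} by observing that enlarging $\mathcal{N}_1\cup\mathcal{N}_2$ to the finite set $\mathcal{N}$ merely adjoins to $\mathcal{S}=W\cup V_{\mathcal{N}_1\cup\mathcal{N}_2}$ a finite collection of fibers $V_y$, $y\in\mathcal{N}$, each of which is already glued to $W$ via Property~\ref{property:special0}~(2). Since $\mathcal{N}\supseteq\mathcal{N}_1\cup\mathcal{N}_2$ we have $V_{\mathcal{N}}\supseteq V_{\mathcal{N}_1\cup\mathcal{N}_2}$, hence $W\cup V_{\mathcal{N}}\supseteq\mathcal{S}$, and, fixing a semi-algebraically connected component $C$ of $V$,
\[
C\cap(W\cup V_{\mathcal{N}})=\bigl(C\cap\mathcal{S}\bigr)\cup\bigcup_{y\in\mathcal{N}}C_y ,
\qquad C_y:=\pi_{[1,p]}^{-1}(y)\cap C .
\]
By Proposition~\ref{prop:connectivity}, the set $T:=C\cap\mathcal{S}$ is non-empty and semi-algebraically connected, and $C\cap W\subseteq T$; in particular $C\cap(W\cup V_{\mathcal{N}})\supseteq T$ is non-empty. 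It thus remains to check that adjoining the sets $C_y$ does not destroy connectedness.

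Next I would analyse each $C_y$. As $C$ is a semi-algebraically connected component of the semi-algebraic set $V$, it is both open and closed in $V$, so $C_y=C\cap V_y$ is open and closed in $V_y$; consequently $C_y$ is a (possibly empty) union of semi-algebraically connected components of $V_y$. Let $D$ be any semi-algebraically connected component of $V_y$ contained in $C_y$. By Property~\ref{property:special0}~(2), $W_y$ is a finite set meeting every semi-algebraically connected component of $V_y$, so there is a point $z\in W_y\cap D$. Then $z\in W$ because $W_y\subseteq W$, and $z\in C$ because $D\subseteq C$, whence $z\in C\cap W\subseteq T$; thus $D\cap T\neq\emptyset$.

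Finally I would assemble the pieces: $C\cap(W\cup V_{\mathcal{N}})$ is the union of $T$ with the finitely many semi-algebraically connected sets $D$ arising as the components of the $C_y$ ($y\in\mathcal{N}$), each of which meets the semi-algebraically connected set $T$. A finite union of semi-algebraically connected sets each having non-empty intersection with a common semi-algebraically connected set is semi-algebraically connected: using the characterisation of semi-algebraic connectedness by semi-algebraic paths \cite[Theorem~5.23]{BPRbook2}, one joins a point of any given $D$ to a point of $D\cap T$ inside $D$ and then moves within $T$. This yields that $C\cap(W\cup V_{\mathcal{N}})$ is non-empty and semi-algebraically connected, as claimed. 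I do not expect any genuine obstacle here beyond Proposition~\ref{prop:connectivity} itself: the corollary is a routine strengthening, the only bookkeeping point being that a semi-algebraically connected component of $V$ is clopen in $V$ and is therefore carved by $\pi_{[1,p]}$ into clopen — hence full-component — pieces of each fiber $V_y$, which is exactly what lets Property~\ref{property:special0}~(2) attach those fibers to $W$.
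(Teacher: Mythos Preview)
Your argument is correct and follows exactly the route the paper intends: the paper's proof is the one-liner ``Follows immediately from Proposition~\ref{prop:connectivity} and Property~\ref{property:special0}~b),'' where ``b)'' is evidently a typo for item~(2). You have simply spelled out the details the paper leaves implicit --- that $C$ is clopen in $V$ so $C_y$ is a union of components of $V_y$, and that Property~\ref{property:special0}~(2) then attaches each such component to $C\cap W\subseteq T$.
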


\begin{proof}
Follows immediately from Proposition \ref{prop:connectivity} and
Property \ref{property:special0} b).
\end{proof}


\section{Block representations and curve segments}
\label{sec:def}

We denote  by $\D$ an ordered domain contained in a real closed field
$\R$ and by $\C$ the algebraically closed field 
$\R[\sqrt{-1}]$.
All the
polynomials in the input and output of our algorithms have
coefficients in $\D$ and the complexity of our algorithms is measured
by the number of arithmetic operations (addition, multiplication, sign
determination) in $\D$.

In this section, we first define certain representations of points,
as well as of s-a curves, that are going to be used in the
inputs and outputs of our algorithms. Several of these representations
share the common property that a certain initial number of coordinates
are fixed by a triangular system of equations, along with certain 
Thom encodings 
and the remaining coordinates are defined by rational functions
to be evaluated at a fixed real root of another polynomial
(see Definitions \ref{def:rur} and \ref{def:curves}  below).
The structure
of these representations reflect the recursive structure of our
main algorithms described in Section \ref{sec:main}.

After defining these representations,  we recall the input, output and
an upper bound on the
complexity of a key algorithm, Algorithm \ref{alg:curvesegments}(Curve
Segments), which is described in full detail in \cite{BPRbook2posted2}.
Algorithm \ref{alg:curvesegments} accepts as input a polynomial
defining a bounded real algebraic variety (with some coordinates
fixed by a triangular system as mentioned above), and outputs a
s-a partition of the first (non-fixed) coordinate, as well
as descriptions of s-a curve segments (as well as points)
parametrized by this coordinate satisfying certain properties --
which are the key to the construction of the main roadmap
algorithm. Indeed, the curve segments appearing in the output of the main roadmap
algorithm (Algorithm \ref{alg:babygiant}) are limits of the curve segments output by the various
calls to Algorithm \ref{alg:curvesegments}.
 
We begin with a few definitions.
\begin{definition}
\label{def:rur}
A {\em Thom encoding $f,\sigma$ representing an element $\alpha\in
  \R$} consists of
\begin{enumerate}
\item a polynomial $f\in \D[T]$ such that $\alpha$ is a root of $f$ in
  $\R$,
\item a sign condition $\sigma$ on the set ${\rm Der}(f)$ of
  derivatives of $f$, such that $\sigma$ is the sign condition
  satisfied by ${\rm Der}(f)$ at $\alpha$.
\end{enumerate}

If $(f,\sigma)$ is a Thom encoding representing an element $\alpha\in \R$, we will
sometimes abuse notation slightly and say that $\sigma$ is the Thom encoding of
the real root $\alpha$ of $f$.  

Distinct roots of $f$ in $\R$ correspond to distinct Thom encodings
\cite[Proposition 2.28]{BPRbook2}.

A {\em real univariate representation $g,\tau,G$ representing $x \in \R^k$} consists of
\begin{enumerate}
\item a Thom encoding $g,\tau$ representing 
an element
$\beta \in \R$,
\item  $G=(g_0,g_{1},\ldots,g_k)\in \D[T]^{k+1}$
where $g$ and $g_0$ are co-prime and such that 
 $$ x=\left(\frac{g_{1}(\beta)}{g_{0}(\beta)},\ldots,
\frac{g_{k}(\beta)}{g_{0}(\beta)}\right) \in \R^k.
$$
\end{enumerate}
\end{definition}

\subsection{Block representations}
\label{subsec:block}
In our algorithms, we make recursive calls, where we fix blocks of
several coordinates.  This makes necessary the following rather
technical definitions.

\begin{definition}
\label{triangthom}
 A {\em triangular Thom encoding
   $\mathcal{F}=(f_{[1]},\ldots,f_{[m]}),\sigma$ representing
   $t=(t_1,\ldots,t_m)$ in $\R^m$} consists of
\begin{enumerate}
\item a triangular system $\mathcal{F}=(f_{[1]},\ldots,f_{[m]})$, i.e.
  $f_{[i]}\in \D[T_1,\ldots,T_i]$ for $i=1,\ldots, m$, such that the
  zero set of $\mathcal{F}$ in $\C^m$ is finite;
\item a list, $\sigma=(\sigma_1,\ldots, \sigma_m)$, where for
  $i=1,\ldots, m$, $\sigma_i$ is the Thom encoding of the root $t_i$
  of $f_{[i]}(t_1,\ldots,t_{i-1},T_i)$.
\end{enumerate}
A triangular system  $\mathcal{F}=(f_{[1]},\ldots,f_{[m]})$ is {\em quasi-monic} if the leading coefficient of 
 $f_{[i]}\in \D[T_1,\ldots,T_i]$
with respect to $T_i$
 is a strictly positive element in $\D$
and  ${\rm deg}_{T_i}(f_{[j]})<{\rm deg}_{T_i}(f_{[i]})$, $j>i$.
A triangular Thom encoding  $\mathcal{F}=(f_{[1]},\ldots,f_{[m]}),\sigma$  is {\em quasi-monic} if
 $\mathcal{F}=(f_{[1]},\ldots,f_{[m]})$  is quasi-monic.

Let $\mathcal{F}=(f_{[1]},\ldots,f_{[m]})$ be a quasi-monic triangular system.
 Let $c\in \D$, $c$ strictly positive and  $g\in \D[T_1,\ldots,T_m]$, we say that {\em $cg$ has a reduction in $\D$ modulo $\mathcal{F}$}
 if there exists a polynomial
$\bar g \in \D[T_1,\ldots,T_m]$ such that
 ${\rm deg}_{T_i}(\bar g)<{\rm deg}_{T_i}(f_{[i]})$ for $i=1,\ldots,m$ and
$$c g = \bar g \mod {\rm I}(\mathcal{F}),$$ where 
${\rm I}(\mathcal{F})$ is the ideal of $\D[T_1,\ldots,T_m]$ generated by 
$f_{[1]},\ldots,f_{[m]}$.
The polynomials $\bar g$ is unique with these properties since a quasi-monic triangular system 
is a Groebner basis with respect to the
lexicographical ordering.
We say that the couple $(c,\bar g)$ is a {\em pseudo-reduction of $g$}.
 
Note that at the zeros of $\mathcal{F}$ the signs of $g$ 
and $\bar g$ coincide.
\end{definition}
\begin{remark}
\label{rem:complexityofreduction}
If $g\in \D[T_1,\ldots,T_m]$ is a polynomial of degree $D$, and $d$ is a bound on the degree of the $f_i$ 
with respect to the $T_i$,
the complexity  of computing 
a pseudo-reduction $(c,\bar g)$ of $g\in \D[T_1,\ldots,T_m]$ modulo $\mathcal{F}$
is $(D d)^{O(m)}$ 
(see Section \ref{sec:details} Proposition \ref{prop:compring} a)).
\end{remark}

\begin{definition}
\label{blockblock}
A {\em real block representation $\mathcal{F},\sigma,L,F$ representing 
$y\in \R^\ell$}
consists of
\begin{enumerate}
\item a triangular Thom encoding
  $\mathcal{F}=(f_{[1]},\ldots,f_{[m]}),\sigma$ representing a root
  $t=(t_1,\ldots,t_m)$ of $\mathcal{F}$ in $\R^m$;
\item a list of natural numbers $L=(\ell_1,\ldots,\ell_m)$ such that
 $$\ell=\ell_1 + \cdots + \ell_m;$$
\item a list of polynomials $F=(F_{[1]},\ldots,F_{[m]})$, where
 \[
F_{[i]} = (f_{[i]0},\ldots,f_{[i]\ell_i}), f_{[i]j} \in 
\D[T_1,\ldots,T_i], 0 \leq j\leq \ell_i,
\]
with
$f_{[i]}(t_1,\ldots,t_{i-1},T_i),f_{[i]0}(t_1,\ldots,t_{i-1},T_i)$
coprime (as polynomials in $T_i$), such that
$$y = (y_{[1]},\ldots,y_{[m]}) \in \R^{\ell},
$$ with
$$\displaylines{ y_{[i]} =
  \left(\frac{f_{[i]1}(t_1,\ldots,t_i)}{f_{[i]0}(t_1,\ldots,t_i)},
  \ldots,
  \frac{f_{[i]\ell_i}(t_1,\ldots,t_i)}{f_{[i]0}(t_1,\ldots,t_i)}
  \right), 1 \leq i \leq m.  }
$$ 
\end{enumerate}

In the case $\ell_1 = \cdots = \ell_m =p $  we will write
\begin{equation}
\label{underbrace}
 L = [p^m].
\end{equation}
\end{definition}

\begin{notation} [Substituting a real block representation in a polynomial] 
\label{not:subst}    
Let $\mathcal{F},\sigma,L,F$ be a real block representation
representing $y\in\R^\ell$, and let $t \in \R^m$ be represented by
$\mathcal{F},\sigma$.

Let

$$\displaylines{
\bar{f}_{[i]} (T_1,\ldots, T_i)= \left( \frac{f_{[i]1}(T_1,\ldots,T_i)}{f_{[i]0}(T_1,\ldots,T_i)}, \ldots,   \frac{f_{[i] \ell_i}(T_1,\ldots,T_i)}{f_{[i]0}(T_1,\ldots,T_i)}
\right).
}
$$ 
Given $Q \in \D [X_1, \ldots, X_k]$ with $\ell \le k$, we 
set
$T=(T_1,\ldots,T_m)$, and define
$Q_F \in \D[T,X_{\ell+1},\ldots,X_k]$ by

\begin{equation}
\label{eqn:subst} 
Q_F := \bar{f}_0(T) Q\left(\bar{f}_{[1]} (T_1), \ldots, \bar{f}_{[m]} (T_1,\ldots, T_m),X_{\ell+1},\ldots, X_k \right),
\end{equation}
where 
$$\bar{f}_0(T)=\prod_{i=1}^m f_{[i]0}(T_1,\ldots,T_i)^{e_i},$$
and $e_i$ is the smallest even number $\ge \deg_{X_{[i]}}(Q)$, where
$X_{[i]}$ is the block of variables
$X_{\ell_1+\cdots+\ell_{i-1}+1},\ldots,X_{\ell_1+\cdots+\ell_i}$.
  
Note that
$$Q_F(t,X_{\ell+1},\ldots,X_k)= \bar{f}_0(t) Q(y,X_{\ell+1},\ldots,X_k),$$
with $\bar{f}_0(t)>0$.
More generally , for any family of polynomials $\mathcal{Q} \subset \D [X_1, \ldots, X_k]$ with $\ell \le k$, we will denote 
$\mathcal{Q}_F = \{Q_{F} \mid Q \in \mathcal{Q}\}$. 
\end{notation}

\begin{notation} [Substituting a real block representation in a parametrized
univariate representation] 
\label{not:subst2}    
Let $\mathcal{F},\sigma,L,F$ be as
above and let 
$g,G$ with $G= (g_0,g_{\ell+1},\ldots,g_k)$,
be a parametrized univariate representation, 
i.e.
$g,g_0,g_{\ell+1},\ldots,g_k \in \D[X_1,\ldots,X_\ell,U]$, where
$X_1,\ldots,X_\ell$ are the parameters, and
$U$ a single variable.

We denote by $G_F$ the tuple $(g_{0,F},\ldots,g_{k,F})$,
where
each $g_{i,F} \in \D[T,U]$ and is defined by
\begin{equation}
\label{eqn:subst2} 
g_{i,F} := \bar{f}_0(T) g_{i}\left(\bar{f}_{[1]} (T_1), \ldots, \bar{f}_{[m]} (T_1,\ldots, T_m),X_{\ell+1},\ldots, X_k \right),
\end{equation}
where 
$$
\bar{f}_0(T)=\prod_{i=1}^m f_{[i]0}(T_1,\ldots,T_i)^{e_i},$$
and $e_i$ is the smallest 
even
number $\ge \max_{j} \deg_{X_{[i]}}(g_{j})$.
\end{notation}

\begin{definition}
\label{def:triangular-thom-encoding}
 Let $t\in \R^m$ be represented by a triangular Thom encoding
 $\mathcal{F},\sigma$.

A {\em Thom encoding $g,\tau$ representing $\beta$ over $t$} consists
of (using the same notation as above)
\begin{enumerate}
\item a polynomial $g \in \D[T_1,\ldots,T_m,T]$ such that
  $g(t,\beta)=0$,
\item a sign condition $\tau$ on $\mathrm{Der}_T(g)$ such that $\tau$
  is the sign condition satisfied by the set $\mathrm{Der}_T(g(t,T))$
  at $\beta$.
\end{enumerate}

A {\em real univariate representation 
  representing $x\in \R^{k}$ over $t$} consists of
\begin{enumerate}
\item a Thom encoding $g,\tau$ representing $\beta$ over $t$,
\item $G=(g_0,g_{1},\ldots,g_k) \in \D[T_1,\ldots,T_m,U]^{k+1}$ such
  that 
  $g(t,U),g_0(t,U)$ are coprime, and such
  that
$$
x=\left(\frac{g_{1}(t,\beta)}{g_{0}(t,\beta)},\ldots,
\frac{g_{k}(t,\beta)}{g_{0}(t,\beta)}\right) \in \R^{k}.
$$
\end{enumerate}
A real univariate representation over $t$ is {\em quasi-monic} if the leading monomial of
 $g$  with respect to $U$ is in $\D$.

A {\em triangular Thom encoding representing $z=(z_1,\ldots,z_r)$ over
  $t$ with variables $Z_1,\ldots,Z_r$} consists of
\begin{enumerate}
\item a triangular system  $\mathcal{H}=(h_{1},\ldots,h_{r})$, with
$$h_{i}\in \D[T_1,\ldots,T_m,Z_{1},\ldots,Z_{i}]$$ for $i=1,\ldots,
  r$, such that the zero set of 
$\mathcal{H}(t,Z_1,\ldots,Z_r)$ in $\C^r$ is finite;
\item a list, $\rho=(\rho_1,\ldots, \rho_r)$, where for $i=1,\ldots,
  r$, $\rho_i$ is the Thom encoding of the root $z_i$ of
  $h_{i}(t,z_1,\ldots,z_{i-1},Z_{i})$.
\end{enumerate}
\end{definition}

\subsection{Curve segments}
\label{subsec:curves}

\begin{definition}
\label{def:curves}
Let $t\in \R^m$ be represented by a triangular Thom encoding
$\mathcal{F},\sigma$.  A {\em curve segment with parameter $X_j$ over
  $t$ on $(\alpha_1,\alpha_2)$ in $\R^{k}$},
$$f_1,\sigma_1, f_2,\sigma_2,g,\tau,G$$ is given by
\begin{enumerate}
\item $\alpha_1, \alpha_2 \in \R$ represented by Thom encodings $f_1,
  \sigma_1$ and $f_2, \sigma_2$ over $t$;
\item a parametrized univariate representation with parameter $X_{j}$, i.e.
  \[ g, G=(g_0, g_{1} ,\ldots, g_k ), \]
  with $g_{j}=X_j g_0 $ and $g, g_{0},\ldots,g_k$ in
  $\D[T_1,\ldots,T_m,X_j,U]$;
\item a sign condition $\tau$ on ${\rm Der}_U (g)$ such that for every
  $x_j \in (\alpha_1, \alpha_2)$ there exists a real root $u(x_j)$ of
  $g (t,x_j,U)$ with Thom encoding $\tau$, and $g_0 (t, x_j, u(x_j))
  \ne 0$.
\end{enumerate}
The {\em curve represented by $f_1,\sigma_1,f_2,\sigma_2,g, \tau,G$}
is the image of the smooth injective s-a function $\gamma$
which maps a point $x_j$ of $(\alpha_1, \alpha_2)$ to the point of
$\R^{k}$ defined by
\[ 
\gamma(x_j) = \left(\frac{g_{1} (t,x_j, u(x_j))}{g_0 (t,x_j, u(x_j))},
   \ldots, \frac{g_k (t,x_j, u(x_j))}{g_0 (t,x_j, u(x_j))} \right) . 
\]
   \end{definition}

Let $Q \in \R[X_1,\ldots,X_k]$.  For 
$0 \leq \ell <k$ 
and $y \in
\R^{\ell}$, we 
write
\begin{equation}
\label{notQy}
 Q(y,-)\defeq Q(y_1,\ldots,y_\ell,X_{\ell+1},\ldots,X_k).
\end{equation}

\begin{remark}
\label{rem:abuse}
Abusing notation slightly, we will occasionally identify
$$\ZZ (Q(y,-),\R^{k-\ell})\subset \R^{k-\ell}$$ 
with 
$$\{y\} \times \ZZ (Q(y,-),\R^{k-\ell})=\ZZ (Q,\R^{k})_y\subset \R^k.$$
More generally, for a s-a set $A\subset \R^k$, 
we will occasionally identify $\{x\in \R^{k-\ell}\mid (y,x)\in A_y\}$ with $A_y\subset \R^k$.
\end{remark}

We now recall the input, output and complexity of \cite[Algorithm 15.2
  (Curve segments)]{BPRbook2posted2}.
\begin{algorithm} {\bf [Curve Segments]}
\label{alg:curvesegments}
\begin{itemize}
\item[{\sc Input.}]
  \begin{enumerate}
  \item a point $t \in \R^m$ represented by a triangular Thom encoding
    $\mathcal{F},\sigma$;
     \item a polynomial $P \in \D [T_1 \ldots, T_{m},X_1,\ldots,X_{k}]$
    for which  $\ZZ (P(t,-), \R^k)$ is bounded;
  \item a finite set of points contained in $\ZZ(P(t,-),\R^{k})$
    represented by real univariate representations $\mathcal{U}$ over
    $t$.
  \end{enumerate}
  Moreover, all the polynomials describing the input are with
  coefficients in $\D$.
\item[{\sc Output.}]
  \begin{enumerate}
  \item An ordered list of points $c_1< \cdots <c_{N}$ of $\R$ with
each $c_i$
    $i=1,\ldots,N$ represented by a Thom encoding $g_i,
    \tau_i$ over $t$.  The $c_i$'s are called {\em distinguished
      values}.
  \item For every $i = 1, \ldots, N$, a finite set of real univariate
    representations $\mathcal{D}_i$ 
over $t$
    representing a finite number of points 
in $\R^k$, called {\em distinguished
      points}.
  \item For every $i = 1, \ldots, N-1$ a finite set of curve segments
    $\mathcal{C}_i$ defined on $(c_i,c_{i+1})$ with parameter $X_{1}$,
    over $t$.  The represented curves are called {\em distinguished
      curves}.
  \item For every $i = 1, \ldots, N-1$ a list of pairs of elements of
    $\mathcal{C}_i$ and $\mathcal{D}_i$ (resp. $\mathcal{D}_{i+1}$)
    describing the adjacency relations between distinguished curves
    and distinguished points.
      
    The distinguished curves and points are contained in $\ZZ
    (P(t,-),\R^{k})$.  The sets of distinguished values, distinguished
    curves, and distinguished points satisfy the following properties.
    \begin{itemize}
    \item[$\mathrm{CS}_1$.] If $c \in \R$ is a distinguished value, the set of 
distinguished points in the  output intersect every
      s-a connected component of
      \[
      \ZZ(P(t,c,-),\R^{k-1}).
      \]

      If $c\in \R$ is not distinguished, the distinguished  curves in the
      output intersect every s-a connected
      component of  
      \[
      \ZZ(P(t,c,-),\R^{k-1}).
      \]
    \item[$\mathrm{CS}_2$.] For each distinguished curve in the output over
      an interval with endpoint a given distinguished value, there
      exists a distinguished point over this distinguished value which
      belongs to the closure of the curve.
    \end{itemize}
  \end{enumerate}
\item[{\sc Complexity.}] If $d = \deg_X(P)\ge 2$, $\deg_T(P)=D$, and
  the degree of the polynomials in $\mathcal{F}$ and the number of
  elements of $\mathcal{U}$ are bounded by $D$, the number of
  arithmetic operations in $\D$ is bounded by $D^{O(m)}d^{O(mk)}$.
  Moreover, the degree in $T_i$ 
of the polynomials appearing in 
the output is bounded by $D d^{O(k)}$.
\end{itemize}
\end{algorithm}

\section{Low dimensional roadmap in a special case}
\label{sec:lowspecial}

In this section we describe an algorithm
for computing the roadmap of a variety described by equations having
a special structure. 
Although, this algorithm is very similar to 
   \cite[Algorithm 15.3 (Bounded Algebraic Roadmap)]{BPRbook2}, 
the complexity analysis differs because of the special 
structure assumed for the input.

Let $Q \in \R[X_1,\ldots,X_k]$ and suppose that $V =\ZZ(Q,\R^{k})$ is bounded.

For   
$0 \leq \ell <k$,  
$0 \le p \leq k-\ell$, and  $y \in \R^{\ell}$, 
we 
write
\begin{equation}
\label{critical}
\Cr_{\ell+p}(Q)(y,-)
\defeq 
\left(
Q(y,-),\frac{\partial Q}{\partial X_{\ell+p+2}}(y,-),
\ldots, \frac{\partial Q}{\partial X_{k}}(y,-)
\right).
\end{equation}

We assume that $Q$ satisfies the following property.
\begin{property}
\label{property:special1}
For every 
$\ell, 0 \leq \ell < k$,  
$0 \le p < k-\ell$, 
and  $y \in \R^{\ell}$, 
the 
algebraic set
$$
\displaylines{
W_{y}^{(p)} =\ZZ(\Cr_{\ell+p}(Q)(y,-),\R^{k-\ell})
}
$$ 
is of dimension $p$ or empty.
\end{property}

\begin{remark}
\label{rem:crit}
Note that for every $y \in \R^{\ell}$, $z \in \R^{r}$,
$(W_{y}^{(r)})_{z}=W_{(y,z)}^{(0)}$ has a finite number of points and,
since $V$ is bounded, intersects
every s-a connected component of $V_{(y,z)}$ by \cite[ Proposition
7.4]{BPRbook2posted2}.
\end{remark}

\begin{proposition}
\label{specialisspecial}
Suppose that $V$ is bounded and $Q$ satisfies Property \ref{property:special1}. Let
\begin{enumerate}
 \item 
$\mathcal{M}=W_{y}^{(0)} \subset V_y$;
\item  $\mathcal{D}^{(p)} \subset \R$ the set of  pseudo-critical values  
(see \cite[Definition 12.41]{BPRbook2}) of $\pi_{\ell+1}$
on 
$W_y^{(p)}$, 
and  $\mathcal{M}^{(p)}$ a set of points such 
that for every 
$c \in \mathcal{D}^{(p)}$,
$\mathcal{M}^{(p)}$ 
intersects every  s-a connected component 
$D$ of 
$(W_y^{(p)})_c$.
\end{enumerate}
Then,
$$
\displaylines{
\left(V_y,\mathcal{M},
W_{y}^{(p)},\mathcal{M}^{(p)},
\mathcal{D}^{(p)}
\right)
}
$$ 
satisfies  Property \ref{property:special0}.
\end{proposition}

\begin{proof}
  Note that by Property \ref{property:special1}, $W_{y}^{(p)}$ is of
  dimension $p$ or empty. In particular,
  $W_{y}^{(0)}$ is finite, i.e. zero-dimensional or empty, and satisfies Property
  \ref{property:special0} 2): for each $z \in \R^{p}$,
  $(W_{y}^{(p)})_{z}=W_{(y,z)}^{(0)}$ is a finite set of
  points having non-empty intersection with every s-a connected
  component of $V_{(y,z)}$ by Remark \ref{rem:crit}.  Moreover,
  $\ZZ(Q(y,-),\R^{k-\ell})$ is clearly bounded (since $\ZZ(Q,\R^k)$ is
  bounded), and the finite set $\mathcal{M}= W_{y}^{(0)}$ is the union
  of the $Q(y,-)$-singular points of $\ZZ(Q(y,-),\R^{k-\ell})$ and the
  $Q(y,-)$-critical points of the map $\pi_{\ell+1}$ on $V_y =
  \ZZ(Q(y,-),\R^{k-\ell})$.  Thus, $\mathcal{M}$ satisfies Property
  \ref{property:special0} 1).

Note also that $\mathcal{M}^{(p)}$
  satisfies Property \ref{property:special0} 3). Indeed, the intersection of $\mathcal{M}^{(p)}$ with
  every s-a connected component of
$(W_y^{(p)})_{c}$
is non-empty, by \cite[Proposition 12.42]{BPRbook2}.
Moreover for every interval $[a,b]$ and $c\in [a,b]$
    such that  $[a,b]$ contains no point of $\mathcal{D}^{(p)}$, except maybe $c$, and for every
    s-a connected component $C$ of 
$( W_y^{(p)})_{[a,b]}$, 
$C_{(y,c)}$
    is a s-a connected component of 
    $(W_{y}^{(p)})_{c}$, by  \cite[Proposition 15.4]{BPRbook2}.
\end{proof}

We are going to describe below, in the special case where $Q$
defines
  a bounded real algebraic set and satisfies Property \ref{property:special1}, an algorithm directly adapted from
\cite[Algorithm 15.3]{BPRbook2} for computing a roadmap of certain
subvarieties of $\ZZ(Q,\R^k)$ of dimension at most $p$: this is
Algorithm \ref{alg:lowspecial} (Roadmap for Lower Dimensional Special
Algebraic Sets).

\begin{remark}
 In all our algorithms, the roadmaps output are represented by a finite number of real univariate representations
 and curve segments over a point defined by a triangular Thom encoding
 (see Definitions \ref{blockblock}, \ref{def:triangular-thom-encoding}, and \ref{def:curves} above).
\end{remark}

\begin{algorithm} {\bf [Roadmap for Lower Dimensional Special
Algebraic Sets]}
\label{alg:lowspecial}
\begin{itemize}
\item [{\sc Input.}]
\begin{enumerate}
\item a polynomial 
$Q\in\D[X_1,\ldots,X_k]$
satisfying Property \ref{property:special1}, and 
for which $V=\ZZ(Q,\R^{k})\subset \mathcal{B}_k(0,1/c)$ (where $c\in \R$);
\item
natural numbers $p,m,r \geq 0$ satisfying 
$0 \leq k- mp \leq p, 0 \leq r < p$;
\item  $y \in \R^{m p}$ represented 
by a real block representation
$\mathcal{F},\sigma,[p^m],F$ (see (\ref{underbrace})) with $t\in \R^m$ represented by 
a quasi-monic triangular system
$\mathcal{F},\sigma$;
 \item $z\in \R^r$ represented by a triangular Thom encoding $\mathcal{H},\rho$ over $t$, with variables
 $X_{mp+1},\ldots,X_{m p+r}$;
\item a finite set of points
 $\mathcal{M}_{0}$ contained in
$$W_{(y,z)}^{(p-r)}=\ZZ(\Cr_{(m+1)p}(Q)(y,z,-),\R^{k-(mp+r)})$$
represented by
 real univariate representations  $\mathcal{U}_{0}$, over $(t,z)$
(using the notation of Property \ref{property:special1}).
\end{enumerate}
\item[{\sc Output.}] a roadmap  
$\RM(W_{(y,z)}^{(p-r)},\mathcal{M}_{0})$ for
$(W_{(y,z)}^{(p-r)},\mathcal{M}_{0})$
represented as a union of curve segment representations and real univariate 
representations  
over points defined by triangular Thom encodings.
The adjacencies between the images of the associated curves and points
are also 
part of the 
output.

\item[{\sc Complexity.}]
$D^{O(m+p)} d^{O((m+p)k)}$
where $d = \deg(Q)\ge 2$, 
and $D$ is a bound on the degree of $\mathcal{H},\mathcal{F},F$ and
the number and degrees of the elements in $\mathcal{U}_{0}$.

\item[{\sc Procedure.}]
\item[Step 0.]  If $mp+r \geq k$ then exit. Else do the following.
\item[Step 1.] Denote 
$$P:=\sum_{A\in \Cr_{(m+1)p}(Q)_F} A^2\in\D[T_1,\ldots,T_m,X_{mp+1},\ldots,X_k]$$ 
using Notation \ref{not:subst} and (\ref{critical}).
Call Algorithm \ref{alg:curvesegments} (Curve Segments)
 with input 
 $$(\mathcal{F},\mathcal{H}),(\sigma,\rho),P, \mathcal{U}_0.$$
Compute a pseudo-reduction of the output 
of the call to Algorithm \ref{alg:curvesegments} (Curve Segments) in the previous step
modulo $\mathcal{F}$ 
(using Proposition \ref{prop:compring}),
and place the result in the description of $\RM(W_{(y,z)}^{(p-r)},\mathcal{M}_{0})$.
\item[Step 2.]  
Using the notation in the output of Algorithm \ref{alg:curvesegments},
for every
$j = 1, \ldots, N$, 
define
$$
\displaylines{
z:=(z,c_j), \cr
\mathcal{H}:=(\mathcal{H},g_j(T_1,\ldots,T_m,X_{1},\ldots,X_{r+i})), \cr
\rho:=(\rho,\tau_j),  \cr
\mathcal{U}_{0} := \mathcal{D}_{j},
}
$$
and call Algorithm \ref{alg:lowspecial} 
(Roadmap for Lower Dimensional Special
Algebraic Sets)
recursively, with input 
$$
(Q, (p,m,r+1),(\mathcal{F},\sigma,[p^m],F) ,(\mathcal{H},\rho), \mathcal{U}_{0}).
$$
Include in the description of $\RM(W_{(y,z)}^{(p-r)},\mathcal{M}_{0})$, the output of 
the call.
\end{itemize}
\end{algorithm}

\vspace{.1in}
\noindent
{\sc Proof of correctness.}
Notice that
$$W_{(y,z)}^{(p-r)}=\ZZ(P(t,z,-)),\R^{k-(m p+r)}).$$
The correctness of the algorithm
then follows from  the correctness of Algorithm \ref{alg:curvesegments} 
(Curve Segments) 
and Proposition \ref{prop:rm}.
The only additional fact that needs to be checked is that when 
the recursion ends with 
$r=k-mp\leq p$, 
the algebraic variety 
$\ZZ(P((t,z,z'),-),\R^{k- p(m+1)})$
is finite, where
$z' = (c_{j_1},\ldots,c_{j_{p-r}}) \in \R^{p-r}$ and
the various $c_{j_i} \in \R$ are associated to the Thom encodings
computed in Step 1 of the algorithm. 
This is the case because 
$ \ZZ(P((t,z,z'),-),\R^{k-p (m+1)}) = W_{(y,z,z')}^{(0)}$,
and
$W_{(y,z,z')}^{(0)}$
is finite 
by Property \ref{property:special1}. 
\eop

\vspace{.1in}
\noindent
{\sc Complexity analysis.}
The depth $i$ of the recursion is bounded by $p-r$,  and the total number of recursive calls at depth $i$ is bounded by $d^{O(ik)}$.
Thus, there are at most $d^{O((p - r) k)}$ calls to  
Algorithm \ref{alg:curvesegments}
(Curve Segments).

In each of the calls to Algorithm \ref{alg:curvesegments} 
(Curve Segments),
the number of arithmetic operations in 
$\D$ is bounded by 
$D^{O(m+p)} d^{O ((m + p) k)}$
using the complexity analysis of 
Algorithm \ref{alg:curvesegments} ( Curve Segments).
Moreover the number of arithmetic operations needed for each pseudo-reduction
is $(D d^{k})^{O(m)}$ 
since the degree in $T_i$ of the output of Algorithm \ref{alg:curvesegments} 
(Curve Segments)
is $D d^{O(k)}$
using Remark \ref{rem:complexityofreduction}.

Thus, the total number of arithmetic
operations in $\D$ for Algorithm  \ref{alg:lowspecial} 
is bounded by $D^{O(m+p)}d^{O((m+p)k)}$.
\eop

\section{Low dimensional roadmap in general}
\label{sec:lowgeneral}
In this section, we first explain how to perform an infinitesimal deformation 
of any given polynomial $Q \in \R[X_1,\ldots,X_k]$ such that
the deformed polynomial satisfies
Property \ref{property:special1}.

We then 
sketch
how to compute the limit of a curve. We also describe 
how to compute the limits of 
roadmaps of 
certain algebraic sets 
which are the critical locus 
of dimension $p$
of certain projection maps restricted to
the algebraic hypersurfaces  obtained after performing an
infinitesimal deformation.

\subsection{Deformation}
\label{subsec:deformation}
We consider a bounded algebraic set defined by a non-negative 
polynomial $Q$. 
Our aim is to define an infinitesimal deformation of $Q$ such that the deformed polynomial satisfies 
Property \ref{property:special1}.
Suppose that the polynomial $Q \in  \R[X_1,\ldots, X_k]$ and 
the tuple
$(d_1, \ldots, d_k)$ satisfy the following 
additional
conditions:
\begin{enumerate}
  \item $d_1 \ge d_2 \ge \cdots \ge d_k$,
  
  \item $\deg (Q) \le d_1,$ $\mathrm{tDeg}_{X_i} (Q) \le d_i$, for $i = 2,
  \ldots, k$,  
where $\mathrm{tDeg}_{X_i} (Q)$ is the maximum degree amongst all monomials in $Q$ containing $X_i$.
\end{enumerate}
Let $\bar{d}_i$ be an even number $> d_i, i = 1, \ldots, k$, and $\bar{d} = (
\bar{d}_1, \ldots, \bar{d}_k)$.
Let 
$$
G_k ( \bar{d}) = X_1^{\bar{d}_1} + \cdots + X_k^{\bar{d}_k} + X_2^2 +
\cdots + X_k^2 + X_{k+1}^2
+ 2k,
$$ and note that 
for all $x\in\R^{k+1}$
$G_k(\bar{d})(x) > 0$.

We denote
for ever $\ell, 0\leq\ell \leq k$ 
and every  $y \in \R^\ell$: 
\begin{notation}
\label{not:defandcr}
\begin{eqnarray*}
\mathrm{Def} (Q, \eps) & = & - \eps G_k ( \bar{d}) + Q,\\
V(\eps)_{y}&=&\ZZ(\Def(Q,\eps)(y,-),\R\langle\eps\rangle^{k+1-\ell}),\\
\mathrm{Cr}_{\ell+p} (Q, \eps)(y,\cdot) \hspace{-0.2cm}&\hspace{-0.2cm}=&\hspace{-0.4cm}\left(
\mathrm{Def}(Q, \eps)(y,\cdot),\frac{\partial  \mathrm{Def}(Q, \eps)}{\partial X_{p + \ell+1}}(y,\cdot),
\ldots, \frac{\partial  \mathrm{Def}(Q, \eps)}{\partial X_{k+1}}(y,\cdot)
\right),\\
W(\eps)_{y}^{(p)}&=&\ZZ(\mathrm{Cr}_{\ell+p} (Q, \eps)(y,-),\R\langle\eps\rangle^{k+1-\ell}).
\end{eqnarray*}
\end{notation}

\begin{proposition}
\label{prop:spespe}
For every 
$\ell, 0 \leq \ell \leq k$ and every $y \in \R^\ell$:
\begin{itemize}
\item[a)]  
$\mathrm{Def} (Q, \eps)$ 
satisfies Property \ref{property:special1};
\item[b)]  $\lim_\eps$  
induces
a  1-1 correspondence between the bounded s-a
connected components of 
 $$V(\eps)_{y}=\ZZ(\Def(Q,\eps)(y,-),\R\langle\eps\rangle^{k+1-\ell})$$
and the s-a connected components of 
$$V_y=\ZZ(Q(y,-),\R^{k-\ell}).$$
\end{itemize}
\end{proposition}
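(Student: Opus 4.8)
The plan is to establish the two parts separately, with part a) being essentially a polynomial-algebra computation and part b) a deformation/limit argument of a type standard in the subject (see \cite[Chapter 12]{BPRbook2}).

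For part a), I would argue as follows. Fix $\ell$, $y\in\R^\ell$, and fix also $p$ with $0\le p\le k+1-\ell$ and a further tuple $z\in\R^{r}$; one must show that $W^p_{(y,z),\eps}:=\ZZ(\Cr_p(\Def(Q,\eps)(y,z,-)),\R\la\eps\ra^{k+1-\ell-r})$ has dimension exactly $p$. Write $\Def(Q,\eps)=Q-\eps G_k(\bar d)$. The key point is that the extra variable $X_{k+1}$ appears in $G_k(\bar d)$ only through the term $X_{k+1}^2$, and in $Q$ not at all; the Jacobian criterion then shows that for the gradient of $\Def(Q,\eps)$ with respect to the ``free'' block of variables to drop rank, one needs $\partial/\partial X_{k+1}=-2\eps X_{k+1}=0$, i.e. $X_{k+1}=0$ on the relevant locus --- so one is really looking at the critical locus on a bounded hypersurface in the remaining variables. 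The genericity built into the $X_1^{\bar d_1}+\cdots+X_k^{\bar d_k}$ term (with the $\bar d_i$ distinct even numbers, decreasing) is exactly what is needed to guarantee that, after the substitution fixing $y$ and $z$, the polynomial $\Def(Q,\eps)(y,z,-)$ defines a smooth bounded hypersurface over $\R\la\eps\ra$ and that the successive polar varieties $\ZZ(\Cr_p(\cdot))$ are smooth of the expected dimension $p$; this is precisely the smoothness-of-polar-varieties statement that holds after a generic linear change of coordinates in \cite{Mohab-Schost2010}, but here made unconditional by the explicit choice of deformation term (compare \cite[Chapter 12, Section 12.6]{BPRbook2} and \cite[Chapter 13]{BPRbook2}). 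I would check dimension $\le p$ via the Jacobian rank and dimension $\ge p$ by exhibiting a smooth point, or more cleanly by invoking the relevant proposition from \cite{BPRbook2posted2} on pseudo-critical varieties; the bookkeeping over the non-archimedean field $\R\la\eps\ra$ is harmless since Property \ref{property:special1} is a first-order statement transferred via Tarski--Seidenberg (Remark \ref{rem:tarski-seidenberg}).

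For part b), the strategy is the classical one. Since $Q(y,-)\ge 0$ on $\R^{k}$ and $Z_y=\ZZ(Q(y,-),\R^k)$ is bounded, for the hypersurface $V_{y,\eps}=\ZZ(\Def(Q,\eps)(y,-),\R\la\eps\ra^{k+1})$ one has $\Def(Q,\eps)(y,-)=Q(y,-)-\eps G_k(\bar d)$, where $G_k(\bar d)>0$ everywhere; thus $V_{y,\eps}$ is a bounded nonsingular hypersurface in $\R\la\eps\ra^{k+1}$ that ``surrounds'' $Z_y$, in the sense that $\lim_\eps$ maps $V_{y,\eps}$ onto $Z_y$ (this is the content of \cite[Proposition 12.43 and the surrounding discussion]{BPRbook2}; the role of the auxiliary variable $X_{k+1}$ is to thicken $Z_y$ into a hypersurface even when $Q(y,-)$ fails to be of the right codimension). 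The claimed bijection between bounded semi-algebraically connected components of $V_{y,\eps}$ and connected components of $Z_y$ is then exactly the standard statement that $\lim_\eps$ induces such a bijection, proved in \cite[Chapter 12]{BPRbook2} (see e.g. \cite[Proposition 12.47]{BPRbook2} / \cite[Proposition 12.43]{BPRbook2posted2}); one checks surjectivity because every point of $Z_y$ is a limit of a point of $V_{y,\eps}$ (solve $Q(y,x)-\eps G_k(\bar d)(x,x_{k+1})=0$ for $x_{k+1}$ near $0$), injectivity because the deformation is through a single infinitesimal so connectedness is preserved under $\lim_\eps$ on a closed and bounded set, and that no two components of $V_{y,\eps}$ collapse to the same component of $Z_y$ by a local argument near points of $Z_y$.

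The main obstacle, and where the real work lies, is part a): one must verify that the \emph{explicit} deformation term $G_k(\bar d)$ --- rather than a generic linear change of coordinates --- suffices to force Property \ref{property:special1} for \emph{all} $\ell$, all $y$, all $p$, and all intermediate fibers simultaneously. Establishing this amounts to a careful Jacobian-rank computation showing that the generic-coordinates hypotheses of \cite{Mohab-Schost2010} are met by this particular monomial deformation; I expect this to be the technical heart of the proof, with part b) being a citation of standard deformation lemmas from \cite{BPRbook2,BPRbook2posted2}.
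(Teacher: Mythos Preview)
Your approach is essentially the same as the paper's: both parts are direct citations to \cite{BPRbook2}. The paper's entire proof is ``a) follows from \cite[Proposition 12.44]{BPRbook2} and b) from \cite[Lemma 15.6]{BPRbook2}.'' So you are on the right track, but your emphasis is inverted.

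You describe part a) as ``the main obstacle'' and ``the technical heart,'' anticipating a careful Jacobian-rank computation. In fact no such work is needed here: the deformation $\Def(Q,\eps)=Q-\eps G_k(\bar d)$ with this specific $G_k(\bar d)$ is precisely the one analyzed in \cite[Proposition 12.44]{BPRbook2}, which already establishes that the resulting polar varieties have the expected dimension for \emph{every} fiber and every $p$. Your parenthetical ``or more cleanly by invoking the relevant proposition from \cite{BPRbook2posted2} on pseudo-critical varieties'' is exactly right and is all that is required; the Jacobian sketch you give is a partial re-derivation of that proposition rather than something new. For part b), your instinct to cite the standard limit-of-deformation lemmas is correct, but the precise reference in the paper is \cite[Lemma 15.6]{BPRbook2} rather than Propositions 12.43/12.47.
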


\begin{proof}
a) follows from  \cite[Proposition 12.44]{BPRbook2} and b) from \cite[Lemma 15.6]{BPRbook2}.
\end{proof}

We are going to describe in Section
\ref{subsec:generallow}
an algorithm for computing the limit
of  a roadmap of the critical locus of dimension $p$, 
$W(\eps)_{y}^{(p)}$, of  
$V(\eps)_{y}$. 
In order to achieve this,  we first need to compute
limits of curves, 
which is the purpose of Section \ref{subsec:limits}.

\subsection{Limits of points and curve segments}
\label{subsec:limits}

The general problem of computing the image of a s-a
set $S \subset \R\la\eps\ra^k$ which is bounded over $\R$ under
the $\lim_\eps$ map reduces to the problem of computing 
the closure of a one-parameter family of s-a sets, which
can be done using  quantifier elimination algorithms 
(see, for example, \cite[p. 556]{BPRbook2}). 
However, the complexity of this general algorithm, $d^{k^{O(1)}}$, is
not good enough for our purposes in this paper. 
Fortunately, 
we need efficient algorithms for computing limits 
only in two very special situations, 
where we can do better than 
in the general case.

These two special cases are the following:
\begin{enumerate}
 \item when the set 
consists of
a point represented by a real univariate
representation,
\item when the set 
consists of 
a curve represented by curve segments. 
\end{enumerate}

We 
now describe the input, output and give an upper bound on the complexity 
of  Algorithm \ref{alg:blocklimits} (Limit of a Bounded Point) and Algorithm \ref{alg:limitofacurve} (Limit of a Curve).
A full description of these algorithms, their correctness and complexity analysis
appear in Section \ref{sec:details}.

\begin{algorithm} {\bf [Limit of a Bounded Point]}
\label{alg:blocklimits}
\begin{itemize}
\item[{\sc Input.}]
\begin{enumerate}
 \item  a quasi-monic triangular Thom encoding
$\mathcal{F},\sigma,$ with coefficients in $\D$, representing a point 
$t\in \R^m$; 
\item a real univariate representation
$g(\eps),\tau(\eps),G(\eps)$ over $t$ with coefficients in $\D[\eps]$,
 representing a  point $z(\eps) \in \R\la \eps \ra^p$ 
bounded over $\R$.
 \end{enumerate}
\item[{\sc Output.}] 
\begin{enumerate}
\item a quasi-monic triangular Thom encoding
$\mathcal{F}',\sigma'$, representing the point 
$t \in \R^{m}$;
\item a quasi-monic real univariate representation 
$(h,\tau,H)$
representing
$$z = \lim_\eps z(\eps) \in \R^p.$$
 \end{enumerate}
\item[{\sc Complexity.}]
If $D_1$ (resp. $D_2$) is a bound on the degrees of the polynomials in $\mathcal{F},g(\eps)$ and $G(\eps)$ 
with respect to $T_1,\ldots,T_{m}$ (resp. $\eps,U$),  then
 $D_1$ (resp. $D_2$) is a bound on the degrees of the polynomials  appearing in the output,
and 
the number of arithmetic operations in $\D$
is bounded by $D_1^{O(m)}D_2^{O(1)}$.
\end{itemize}
\end{algorithm}

\begin{remark}
Note that there is a possibly new representation  
of the point $t\in \R^m$ specified in the input, 
in the output of Algorithm
\ref{alg:blocklimits} (Limit of a Bounded Point).
The reason for this peculiarity is explained in Section \ref{sec:details}
(cf. Proposition \ref{prop:compring} b)).
\end{remark}

\begin{algorithm} {\bf [Limit of a Curve]}
\label{alg:limitofacurve}
\begin{itemize}
\item[{\sc Input.}] 
\begin{enumerate}
 \item  a 
 quasi-monic
 triangular Thom encoding $\mathcal{F},\sigma$ with coefficients in $\D$
 representing $t\in \R^m$;
 \item  a triangular Thom encoding 
$\mathcal{H}(\eps),\rho(\eps)$ 
over $t$ with coefficients in
 $\D [\eps]$ representing $z(\eps) \in \R\la \eps \ra^r$ over $t$;
 \item 
 a curve segment
with parameter 
$X_{r+1}$
and coefficients in $\D[\eps]$ over $(t,z(\eps))$, representing a
curve 
$S(\eps)$ in $\R\la\eps\ra^k$
bounded over $\R$.
\end{enumerate}
\item[{\sc Output.}] 
\begin{enumerate}
\item a
real univariate representation 
$p_z,\rho_z,P_z$
of $z=\lim_\eps(z(\eps))$,
with $u$ the root of $p_z$ with Thom encoding $\rho_z$;
\item
a finite set 
$\mathcal{D}= \{d_1,\ldots,d_{N-1}\}$
where each $d_i$ is 
a real univariate representation
over $(t,u,c_i)$, and $c_i$ is given by a Thom encoding over $t$ fixing
$X_{m(i)}$;
\item 
a finite set
$\mathcal{W} = \{w_1,\ldots,w_{N}\}$ 
of curve segments over $(t,u)$
with $w_i$ parametrized by
$X_{\ell(i)}$.

Moreover,
the union of the curves represented by 
$\mathcal{W}$,
together with 
the points represented by 
$\mathcal{D}$
define a partition
of 
$S=\lim_\eps(S(\eps))$.
All the coefficients of the polynomials in the output belong to $\D$.
\end{enumerate}
\item[{\sc Complexity}]
If
the polynomials occurring in 
the input have degrees bounded by 
$D$, then the complexity of the algorithm is bounded by 
$k^{O(1)} D^{O(m+r)}$.
\end{itemize}
\end{algorithm}

\subsection{Low dimensional roadmap algorithm}
\label{subsec:generallow}
We are going to describe 
an algorithm computing the limit of 
a roadmap of the critical locus of dimension $p$, 
$W(\eps)_{y}^{(p)}$, of the deformation 
$V(\eps)_{y}=\ZZ(\Def(Q,\eps)(y,-),\R\langle\eps\rangle^{k+1-\ell})$ of $V_y=\ZZ(Q(y,-),\R^{k-\ell})$.
The algorithm proceeds by first calling 
Algorithm \ref{alg:lowspecial} (Roadmap for Lower Dimensional Special
Algebraic Sets)
 in order to
compute a roadmap for $W(\eps)_{y}^{(p)}$, and then
computes the image of the resulting roadmap
under the $\lim_\eps$ map.
Note that this limit is not necessarily a roadmap of 
$V_y$, since a s-a connected component of $V_y$ might
contain 
the limit
of 
more than one s-a connected 
component of $W(\eps)_{y}^{(p)}$.

\begin{algorithm} {\bf 
[Limit of Roadmaps of Special Low Dimensional Varieties]}
\label{alg:lowgeneral}
\begin{itemize}
\item[{\sc Input.}]
\begin{enumerate}
\item natural numbers $p\le k$;
\item
a polynomial $Q\in\D[X_1,\ldots,X_k]$
for which $Z=\ZZ(Q,\R^{k})\subset \mathcal{B}_k(0,1/c)$ (with $c\in \R$);
\item $y \in \R^{mp}$ represented by the real block  representation 
$$\mathcal{F},\sigma,[p^m],F,$$ (see (\ref{underbrace})) with coefficients in $\D$, such that $t\in \R^m$ is represented by
a quasi-monic triangular Thom encoding
$\mathcal{F},\sigma$; 
\item
a finite set of points 
$\mathcal{N}(\eps)
\in \{y\} \times \R\langle\eps\rangle^{p}$
represented by 
quasi-monic
real univariate representations $\mathcal{V}(\eps)$, 
over $t$.
\end{enumerate}

\item[{\sc Output.}] 
Real univariate representations and curve segments  
representing the
set of points 
$$
\mathcal{R}=(\pi_{[1,k]}\circ \lim_\eps)(\RM(W(\eps)_{y}^{(p)},
{\mathcal A}(\eps)))
$$ 
where
$W(\eps)_{y}^{(p)}$ is the zero set of
$ \mathrm{Cr}_{\ell+p} (Q, \eps)(y,-)$,
\begin{equation}
\label{notationW}
{\mathcal A}(\eps)=\bigcup_{z(\eps)\in \mathcal{N}(\eps)}
W(\eps)^{(0)}_{y,z(\eps)},
\end{equation}
and 
$\RM(W(\eps)_{y}^{(p)} ,{\mathcal A}(\eps)))$ is a roadmap for 
$(W(\eps)_{y}^{(p)} ,{\mathcal A}(\eps)))$.
\item[{\sc Complexity.}]
$D^{O(m+p)} d^{O((m+p)k}$
where $d = \deg(Q)\ge 2$, and $D$ is a bound on the degrees of $\mathcal{F},F$ 
and
the number and degrees 
(including that in $\eps$)
of the elements in 
$\mathcal{N}(\eps)$.
\item[{\sc Procedure.}]
\item[Step 1.]
Let $T=(T_1,\ldots,T_m)$, and using  Notation \ref{not:subst}
and Notation \ref{not:defandcr},
$$P=\sum_{A \in \Cr_{(m+1)p}(Q_,\eps)_F} A^2 \in 
\D[\eps,T,X_{mp+1},\ldots,X_k].$$
Call 
\cite[Algorithm 12.18 (Parametrized Bounded Algebraic Sampling)]{BPRbook2}
with input $P$ and parameters 
$\eps,T,X_{mp+1},\ldots, X_{(m+1)p}$
and  output a set of parametrized univariate representations with variable $U$.

Compute a pseudo-reduction of the output modulo $\mathcal{F}$
(using Pro\-position \ref{prop:compring})
and place the result in 
$\mathcal{U}(\eps)$.

For every 
$(h(\eps),H(\eps)) \in \mathcal{U}(\eps)$, 
and 
every 
$z(\eps) \in \mathcal{N}(\eps)$ 
represented by a
real univariate representation 
$(g(\eps),\tau,G(\eps)) \in \mathcal{V}(\eps)$, 
use \cite[Algorithm 12.20 (Triangular Thom Encoding)]{BPRbook2}
with input the triangular system $(\mathcal{F},g(\eps),h(\eps))$ to compute
the Thom encodings of
the real roots of $h(\eps)(y,z(\eps),U)$.
Let
$\mathcal{U}'(\eps)_{z(\eps)}$ be the set of real univariate representations
over $y,z(\eps)$ so obtained.
Define
$$
\displaylines{
\mathcal{U}'(\eps) = \bigcup_{z(\eps) \in \mathcal{N}(\eps)} 
\mathcal{U}'(\eps)_{z(\eps)}.
}
$$
The set of  points represented by 
$\mathcal{U}(\eps)'$ is  ${\mathcal A}(\eps)$ (see (\ref{notationW})).
Compute the limit of  ${\mathcal A}(\eps)$ 
using
 Algorithm \ref{alg:blocklimits} (Limit of a Bounded Point).

\item[Step 2.]
Call Algorithm 
\ref{alg:lowspecial} (Roadmap for Lower Dimensional Special
Algebraic Sets)
with input 
${\mathrm{Def}}(Q,\eps)$ (cf. Notation \ref{not:defandcr}),
the triple $(p,m,0)$,
the real block representation 
$\mathcal{F},\sigma,[p^m],F$, 
and $\mathcal{U}'(\eps)$ 
(note that since $r=0$ in the input there is no triangular Thom encoding $H,\rho$ specified in the input to this call).
The output of Algorithm 
\ref{alg:lowspecial} 
consists of a set of real univariate representations
and curve segments
over triangular Thom encodings. 
Each such curve segment, 
$\gamma(\eps) = (f_1,\sigma_1,f_2,\sigma_2,g,\tau,G)$,
is defined over some 
$(t,z_\gamma(\eps))$ with $r_\gamma < p$ and 
$z_\gamma(\eps) \in \R\la\eps\ra^{r_\gamma}$, represented
by a triangular system $\mathcal{F},\mathcal{H}_\gamma(\eps)$.

\item[Step 3.]
For each such curve segment $\gamma(\eps)$
over $(t,z_\gamma(\eps))$, output in the previous step, 
call  Algorithm \ref{alg:limitofacurve} (Limit of a Curve) 
with input the triangular system 
$\mathcal{F}, \mathcal{H}_\gamma(\eps)$ and 
$\gamma(\eps)$.
Finally, project to  $\R^k$ by forgetting the last coordinate.
\end{itemize}
\end{algorithm}

\begin{remark}
The role played by the 
set of points 
 ${\mathcal A}(\eps)$
whose limit is computed by
Algorithm
 \ref{alg:lowgeneral}
 (Limit of Roadmaps of Special Low Dimensional Varieties),
 will become clear in 
 the proof of correctness of
 Algorithm \ref{alg:babygiant_bounded}
(Baby-giant Roadmap for
Bounded Algebraic Sets) (see (\ref{reasonforincludingpoints})).
\end{remark}

\vspace{.1in}
\noindent
{\sc Proof of correctness.}
First note that it follows from Proposition \ref{prop:spespe}
that
$\Def(Q,\eps)_F$ 
satisfies Property \ref{property:special1}, and hence 
 ${\mathcal A}(\eps)$
is a finite set of points.
The correctness of the algorithm now follows from the correctness of
Algorithm 
\ref{alg:lowspecial} (Roadmap for Lower Dimensional Special
Algebraic Sets)
and
Algorithm \ref{alg:limitofacurve} (Limit of Curve).
\eop

\vspace{.1in}
\noindent
{\sc Complexity analysis.}
The number of arithmetic operations performed in 
$\D[\eps]$ in Step 1  is bounded by 
$D^{O(m+p)}d^{O((m+p) k)}$ arithmetic operations in $\D[\eps]$
according to the complexity analysis of 
\cite[Algorithm 12.18 (Paramet\-ri\-zed Bounded Algebraic Sampling)]{BPRbook2}
and
\cite[Algorithm 12.20 (Triangular Thom Encoding)]{BPRbook2}.
Since the degree in $\eps$ in the output of \cite[Algorithm 12.18 (Parametrized Bounded Algebraic Sampling)]{BPRbook2}
is $d^{O(k)}$
and does not change during the pseudo-reduction, 
the number of arithmetic operations in $\D$ in Step 1  
and hence the complexity
is bounded by 
$D^{O(m+p)}d^{O((m+p) k)}$.

The number of arithmetic operations performed in $\D[\eps]$ in Step 2 
is bounded by 
$D^{O(m+p)}d^{O((m+p) k)}$
according to the complexity analysis of Algorithm 
\ref{alg:lowspecial} (Roadmap for Lower Dimensional Special
Algebraic Sets).
Moreover the degree in $\eps$ is bounded by $O(d)^{k}$.
To see this one has to observe that the arithmetic operations in  
$\D[\eps]$ in the call to Algorithm \ref{alg:curvesegments} (Curve Segments) 
coincide with those performed by
\cite[Algorithm 15.10 (Parametrized Curve Segments)]{BPRbook2} 
with $\eps$ treated as a parameter.
It follows from the complexity analysis of 
\cite[Algorithm 15.10 (Parametrized Curve Segments)]{BPRbook2} that the 
degree in $\eps$ is bounded by $O(d)^{k}$.

So the number of arithmetic operations in $\D$ 
in Step 2 and hence the complexity
is bounded by $D^{O(m+p)}d^{O((m+p)k)}$.

The complexity of Step 3 is also bounded by 
$D^{O(m+p)}d^{O((m+p) k}$
according to the complexity analysis of 
Algorithm 
\ref{alg:limitofacurve} (Limit of Curve).

Thus the total complexity of the algorithm is 
$D^{O(m+p)}d^{O((m+p) k)}$.
\eop

\section{Main result}
\label{sec:main}
We now describe our main result
Algorithm \ref{alg:babygiant}  (Baby-giant Roadmap for General
Algebraic Sets).
It is based on
Algorithm \ref{alg:babygiant_bounded} (Baby-giant Roadmap for
Bounded Algebraic Sets),
computing a 
baby step - giant step roadmap algorithm for 
a bounded algebraic set. 
The algorithm for computing roadmaps of
general (i.e. not necessarily bounded) algebraic sets,
Algorithm \ref{alg:babygiant}  (Baby-giant Roadmap for General
Algebraic Sets)
is then obtained from 
Algorithm \ref{alg:babygiant_bounded} (Baby-giant Roadmap for
Bounded Algebraic Sets) following a
method similar to the one in \cite{BPRbook2} to go from the bounded case to the general case.

Algorithm \ref{alg:babygiant_bounded} 
(Baby-giant Roadmap for Bounded Algebraic Sets) 
proceeds roughly as follows. 
We denote by $y$ the vector of coordinates which are fixed.
If the number of non-fixed coordinates is too
small (i.e. less than the number $p$ which is prescribed in the input),
then we compute the roadmap using 
\cite[Algorithm 15.3 (Bounded Algebraic Roadmap)]{BPRbook2}.
Otherwise, we compute representations of points in 
 $\mathcal{N}_{y}\subset \R^p$
defining the  fibers at which we make recursive calls
to the same algorithm;
these are the giant steps.

For the 
baby steps, the algorithm uses 
Algorithm \ref{alg:lowgeneral} 
(Limit of Roadmaps of Special Low Dimensional Varieties) 
to compute the limit (under the $\lim_\eps$ map) of the roadmap of 
the critical set $W^{(p)}_{y,\eps}$ 
going through a well chosen finite set of points.

We are now ready to proceed to the description of 
Algorithm \ref{alg:babygiant_bounded}.

We first introduce some notation to be used in the description
of Algorithm \ref{alg:babygiant_bounded} and 
the proof of its correctness.

\begin{notation}
\label{notationalgocor}
 The input of the algorithm involves
 \begin{enumerate}
\item a polynomial $Q\in\D[X_1,\ldots,X_k]$
 such that $V=\ZZ(Q,\R^{k})\subset \mathcal{B}_k(0,1/c)$ (where $c\in \R$);
\item  
$y\in \R^{m p}$  
\item a finite set of points $\mathcal{M}_{0}\subset V_{y}= \ZZ(Q(y,-), \R^{k-mp})$.
\end{enumerate}

Let as in Notation \ref{not:defandcr}
\begin{eqnarray*}
 V(\eps)_{y}&
 =
 &\ZZ(\Def(Q,\eps)(y,-),\R\langle\eps\rangle^{k-pm+1}),\\
W(\eps)_{y}^{(p)}&
=&
\ZZ(\mathrm{Cr}_{\ell+p} (Q, \eps)(y,-),\R\langle\eps\rangle^{k-pm+1}),
\end{eqnarray*}

and  define
\begin{enumerate}
 \item 
$\mathcal{M}(\eps)=W(\eps)_{y}^{(0)} \subset V(\eps)_y$;
\item  
$\mathcal{D}(\eps)^{(p)} \subset \R\la \eps \ra$ the set of  pseudo-critical values  
(see \cite[Definition 12.41]{BPRbook2}) of $\pi_{\ell+1}$
on 
$W(\eps)_y^{(p)}$ 
and  $\mathcal{M}(\eps)^{(p)}$ a set of points such 
that for every 
$c \in \mathcal{D}(\eps)^{(p)}$,
$\mathcal{M}(\eps)^{(p)}$ 
intersects every  s-a connected component 
$D$ of 
$(W(\eps)_y^{(p)})_c$.
\end{enumerate}

It follows from Proposition 
\ref{prop:spespe}
and Proposition \ref{specialisspecial} that
$$
\displaylines{
\left(V(\eps)_{y},
\mathcal{M}(\eps),
W(\eps)_{y}^{(p)},\mathcal{M}(\eps)^{(p)},
\mathcal{D}(\eps)^{(p)}
\right)
}
$$ 
satisfies  Property \ref{property:special0}.

We also define
\begin{eqnarray*}
\mathcal{N}(\eps)&=&\pi_{[mp+1,(m+1)p]}(\mathcal{M}(\eps)),\\
\mathcal{N}&=&\lim_\eps(\mathcal{N}(\eps)),\\
\mathcal{N}(\eps)^{(p)}&=&\pi_{[mp+1,(m+1)p]}(\mathcal{M}(\eps)^{(p)}),\\
\mathcal{N}^{(p)}&=&\lim_\eps(\mathcal{N}(\eps)^{(p)}),\\
\mathcal{N}_{0}&=&\pi_{[mp+1,(m+1)p]}(\mathcal{M}_{0}),\\
\mathcal{N}' &=&\mathcal{N}_{0} \cup \mathcal{N} \cup \mathcal{N}^{(p)},\\
\mathcal{N}'(\eps) &=&
 \mathcal{N}_{0} \cup \mathcal{N}(\eps) \cup \mathcal{N}(\eps)^{(p)}.
\end{eqnarray*}
\end{notation}

\begin{algorithm} {\bf [Baby-giant Roadmap for
Bounded Algebraic Sets]}
\label{alg:babygiant_bounded}
\begin{itemize}
\item[{\sc Input.}]
\begin{enumerate}
\item a polynomial $Q\in\D[X_1,\ldots,X_k]$
 such that $V=\ZZ(Q,\R^{k})\subset \mathcal{B}_k(0,1/c)$ (where $c\in \R$);
\item  
$y\in \R^{m p}$ represented by a real block representation
$$\mathcal{F},\sigma,[p^m],F,$$ (see (\ref{underbrace})) such that $t\in \R^m$ is represented by
a quasi-monic triangular Thom encoding 
$\mathcal{F},\sigma$; 
\item a finite set of points $\mathcal{M}_{0}$ in
$V_{y}= \ZZ(Q(y,-), \R^{k-mp})$ 
represented by
quasi-monic
 real univariate representations  $\mathcal{U}_{0}$
over $t$.
 All the coefficients of the input polynomials are in $\D$.
\end{enumerate}
\item[{\sc Output.}]
a representation of a roadmap, 
$\BGRM(V_{y},\mathcal{M}_{0})$,
for
$(V_{y},\mathcal{M}_{0})$.
\item[{\sc Complexity.}] $d^{O( k^2/p + pk)}$ operations in $\D$ where 
$d = \deg(Q)\ge 2$ and the degrees of the polynomials in 
$\mathcal{F},F$, as well as the degrees of the polynomials
and the number of elements in $\mathcal{U}_{0}$ are all bounded by 
$d^{O(k)}$.
\item[{\sc Procedure.}]

\item[Step 1.]
If $(m+1)p \geq  k$ call 
\cite[Algorithm 15.3 (Bounded Algebraic Road\-map)]{BPRbook2} with input 
\begin{enumerate}
\item the quasi-monic triangular Thom encoding  $\mathcal{F},\sigma$ representing
$t \in \R^m$,
\item the polynomial $Q_F$, using  Notation \ref{not:subst},
\item 
the 
finite set of points $\mathcal{M}_{0}$ in
$V_{y}= \ZZ(Q_F(t,-), \R^{k-mp})$ 
represented by  real univariate representations  $\mathcal{U}_{0}$ over $t$.
\end{enumerate}

Otherwise, 
do the following.
\item[Step 2.]
Determine the finite set of points 
$\mathcal{N}$, and their representation $\mathcal{U}$
used in the recursive call as follows.

Let $T=(T_1,\ldots,T_m)$, and using Notation \ref{not:subst}
and \ref{not:defandcr}),
$$
P=\sum_{A \in \Cr_{(m+1)p}(Q,\eps)_F} A^2 \in 
\D[\eps,T,X_{mp+1},\ldots,X_k].$$

\item[Step 2 a).]
Call 
\cite[Algorithm 12.18 (Parametrized Bounded Algebraic Sampling)]{BPRbook2}
with input $P$ and parameters 
$\eps,T$,
and output a set of para\-metrized univariate representations 
 with variable $U$.
 Compute a pseudo-reduction of the output modulo $\mathcal{F}$
(using Proposition \ref{prop:compring})
 and place the result in 
$\mathcal{U}(\eps)$.

For every $(h(\eps),H(\eps)) \in \mathcal{U}(\eps)$,
use \cite[Algorithm 12.20 (Triangular Thom Encoding)]{BPRbook2}
with input the triangular system $(\mathcal{F},h(\eps))$ to compute
the Thom encodings of
the real roots of $h(\eps)(y,U)$.
Let
$\mathcal{U}'(\eps)$ be the set of real univariate representations
over $y$ so obtained.
Let $\mathcal{M}(\eps)  \subset V(\eps)_{y}$ 
be the set of points represented by 
$\mathcal{U}'(\eps)$.

Projecting $\mathcal{U}'(\eps)$, by forgetting  
its components corresponding to the last $k-(m+1)p$ coordinates
obtain
a set of 
quasi-monic
real univariate representations
$\mathcal{V}(\eps)$
representing $\mathcal{N}(\eps)$
over $t$.
Then apply 
Algorithm \ref{alg:blocklimits}
(Limit of a Bounded Point)
with $\mathcal{V}(\eps)_{\y}$ as input to obtain 
a set of 
quasi-monic
real univariate representations
$\mathcal{V}$
representing $\mathcal{N}$
over $t$.

\item[Step 2 b).]
Perform Algorithm \ref{alg:curvesegments} (Curve Segment) with  
input
$P$
and the 
triangular Thom encoding 
$\mathcal{F},\sigma$
and retain the set of univariate representations,
$\mathcal{U}(\eps)^{(p)}$,
representing
$\mathcal{M}(\eps)^{(p)}\subset V(\eps)_{y}$, 
which are the distinguished points in the output. 

Projecting $\mathcal{U}(\eps)^{(p)}$, by forgetting 
its components corresponding to the last $k-(m+1)p$ coordinates
obtain
a set of real univariate representations
$\mathcal{V}(\eps)^{(p)}$
representing $\mathcal{N}(\eps)^{(p)}$.
Then apply Algorithm \ref{alg:blocklimits} 
(Limit of a Bounded Point)
with $\mathcal{V}(\eps)^{(p)}_{y}$ as input to obtain 
a set of 
quasi-monic
real univariate representations
$\mathcal{V}^{(p)}$
representing $\mathcal{N}^{(p)}$.

\item[Step 2 c).]
Projecting $\mathcal{U}_{0}$, by  
its components corresponding to the last $k-(m+1)p$ coordinates
obtain
a set of quasi-monic  real univariate representations
$\mathcal{V}_{0}$
representing $\mathcal{N}_{0}$
over $t$.

Let 
\[
\mathcal{V}' =
 \mathcal{V}_{0} \cup \mathcal{V}
\cup \mathcal{V}^{(p)},
\]
and
\[
\mathcal{V}'(\eps) =
 \mathcal{V}_{0} \cup 
\mathcal{V}(\eps) \cup \mathcal{V}(\eps)^{(p)}.
\]

\item[Step 3.]
Call Algorithm \ref{alg:lowgeneral} (Limit of Roadmaps of Special Low Dimensional Varieties)
with 
input
$p$, $Q$,
the real block representation 
$\mathcal{F},\sigma,[p^m],F$,
and 
$\mathcal{V}'(\eps)_{y}$ 
and note that 
the one-dimensional s-a set described by the output 
contains the image under the $(\pi_{[1,k]}\circ \lim_\eps)$ map of the finite set
$$
\mathcal{A}(\eps)= \bigcup_{z(\eps)\in \mathcal{N}'(\eps)}W(\eps)^{(0)}_{y,z(\eps)}.
$$
Place the result in the output.
\item[Recursive call.]
For every element $u = ((\mathcal{F},h), (\sigma,\tau),(F,H)) \in 
\mathcal{V}'$, 
representing
$(y,z)\in \mathcal{N}'
\subset \R^{(m+1)p}$, 
determine a set, 
$\mathcal{U}_{z}$,
of 
quasi-monic
univariate representations over
$u$
representing
\begin{equation}
 \label{toremember}
\mathcal{M}_z = 
(\pi_{[1,k]}\circ \lim_\eps) \left( \bigcup_{z(\eps)\in \mathcal{N}'(\eps), \lim_\eps(z(\eps))=z}W(\eps)^{(0)}_{y,z(\eps)}\right)
\end{equation}
using  
Algorithm \ref{alg:blocklimits} (Limit of a Bounded Point).

Call Algorithm \ref{alg:babygiant_bounded} (Baby-giant Roadmap for
Bounded Algebraic Sets) recursively with input
$$
Q,
(\mathcal{F},h),
(\sigma,\tau),
[p^{m+1}],
(F,H), 
$$
and 
$\mathcal{U}_{z}\cup (\mathcal{U}_{0})_{z}$, 
where
$(\mathcal{U}_{0})_{z}$ 
is a set of
quasi-monic
real univariate representations representing
 $(\mathcal{M}_{0})_{z}$.
\end{itemize}
\end{algorithm}

\begin{remark}
Algorithm \ref{alg:babygiant_bounded} would have been much simpler
if we could make recursive calls to 
Algorithm \ref{alg:babygiant_bounded} at the fibers over the points
in 
$\mathcal{N}_{\eps}$,
and thus obtain a roadmap first of 
$V(\eps)_{y}$,
and finally take the image of the resulting roadmap under the
$\lim_\eps$ map. In this case the proof of correctness of the algorithm
would be an immediate consequence of the main connectivity result,
Corollary \ref{specialisspecial}, and the fact that the image
under $\lim_\eps$ of a bounded, s-a connected 
set is also s-a connected.

However, we are unable to compute efficiently 
limits of s-a curves given by curve segments over a real
block representation which depend on $\eps$, if the number of blocks and
their sizes are large. More precisely,  
if the number of blocks as well as the sizes of the blocks are
proportional to $\sqrt{k}$, 
then the procedure that we use to compute limits of curve segments
could produce polynomials with degrees as large as $d^{c k^2}$ in
$\eps$ where $c$ is some constant $>0$. This is unacceptable since we
are aiming for a roadmap algorithm having complexity 
$d^{O(k \sqrt{k})}$.

We overcome this difficulty by making recursive calls to
Algorithm \ref{alg:babygiant_bounded}, not at the fibers over the points in
$\mathcal{N}(\eps)$, but at the fibers over 
$\mathcal{N} = \lim_\eps (\mathcal{N}(\eps))$, so that the algebraic sets
specified in the input to the various recursive calls are then $V_{(y,z)}$ 
for $z \in \mathcal{N}$. In this approach, the only limits of
curve segments that are computed are those of the roadmap of
$W(\eps)_{y}^{(p)}$, and we can compute the limits of these curve
segments without spoiling the complexity,
as they are not defined over real block representations
depending on $\eps$. However,
since the recursive calls are made with fibers
of $V_y$ (instead of $V(\eps)_{y}$),
Corollary \ref{specialisspecial} is not directly applicable, and we need
to be more careful about choosing the set of points in the input to the
recursive calls. It also makes the proof of correctness 
more complicated.
\end{remark}

\vspace{.1in}
\noindent
{\sc Proof of correctness.}

\noindent {\bf Base case.}

If $\lceil (k - mp)/p \rceil = 1$ then
the correctness of the algorithm is a consequence of the correctness
of \cite[Algorithm 15.3 (Bounded Algebraic Roadmap)]{BPRbook2}.

\noindent {\bf General case.}

Suppose that $\lceil (k - mp)/p \rceil > 1$. 

Denote by 
$\BGRM(V_{y},\mathcal{M}_{0})$ 
the union of  the curve segments output
by Algorithm \ref{alg:babygiant_bounded} (Baby-giant Roadmap for
Bounded Algebraic Sets).

We have that
$$
\displaylines{
\BGRM(V_{y},\mathcal{M}_{0})
= \mathcal{R} \cup 
\bigcup_{(y,z) \in  \mathcal{N}'}
\BGRM(V_{(y,z)}, 
\mathcal{M}_{z} \cup (\mathcal{M}_{0})_z),
}
$$
with 
$$
\mathcal{R}=(\pi_{[1,k]} \circ \lim_\eps)
(\RM(W(\eps)_{y}^{(p)}, \mathcal{A}(\eps))),
$$ 
denoting 
as before 
by
$W(\eps)_{y}^{(p)}$ the zero set of
$\mathrm{Cr}_{(m+1)p}(Q, \eps)_F$ and 
$$
\mathcal{A}(\eps)= \bigcup_{z(\eps)\in \mathcal{N}'(\eps)}W(\eps)^{(0)}_{y,z(\eps)}
$$
(see Notation \ref{notationalgocor}).

\noindent {\bf Proof of $\mathcal{M}_{0} \subset \BGRM(V_{y},\mathcal{M}_{0})$.}

The proof is by induction on  $\lceil (k - mp)/p \rceil$.

We suppose by induction 
that for every $(y,z) \in  \mathcal{N}$ 
\[
\mathcal{M}_{z} \cup (\mathcal{M}_{0})_z \subset 
\BGRM(V_{(y,z)}, \mathcal{M}_{z} \cup (\mathcal{M}_{0})_z).
\]

Since
$
\displaystyle{
\mathcal{M}_{0} \subset \bigcup_{(y,z) \in  \mathcal{N}}
(\mathcal{M}_{0})_z
}
$,
and by induction hypothesis we have that 
\[
(\mathcal{M}_{0})_z) \subset 
\BGRM(V_{(y,z)}, 
\mathcal{M}_{z} \cup (\mathcal{M}_{0})_z),
\]
it is clear that 
$\BGRM(V_{y},\mathcal{M}_{0})$ contains $\mathcal{M}_{0}$.

\noindent {\bf Proof of $\RM_1$.}

The property $\RM_1$ of 
$\BGRM(V_{y},\mathcal{M}_{0})$ is also proved
by induction on  $\lceil (k - mp)/p \rceil$. 

Let  $C$ be a s-a connected component of $V_{y}$,
and
$C'=
\BGRM(V_{y}, \mathcal{M}_{0}) \cap C$.
We want to prove that $C'$
is s-a connected.

Supposing that $x,x' \in C'$, we are going to prove that there exists a s-a path
$ \gamma:[0,1] \rightarrow C'$ 
with $\gamma(0)=x,\gamma(1) = x'$. 

\noindent {\bf Without loss of generality we can suppose that $x\in \mathcal{R}$:}

Since
$$
\displaylines{
\BGRM(V_{y},\mathcal{M}_{0})
= \mathcal{R} \cup 
\bigcup_{(y,z) \in  \mathcal{N}'}
\BGRM(V_{(y,z)}, 
\mathcal{M}_{z} \cup (\mathcal{M}_{0})_z),
}
$$
we have that $x$ (resp. $x'$) belongs to 
$\mathcal{R}$ 
or to some
$
\BGRM(V_{(y,z)}, 
\mathcal{M}_{z} \cup (\mathcal{M}_{0})_z)
$ 
with $(y,z) \in \mathcal{N}'$.

If 
$x \in \BGRM(V_{(y,z)}, 
\mathcal{M}_{z} \cup (\mathcal{M}_{0})_z)$
we show that $x$ can  be connected to a point in $\mathcal{M}_{z}$
inside
\[
\BGRM(V_{(y,z)}, 
\mathcal{M}_{z} \cup (\mathcal{M}_{0})_z)
\]
by a s-a path.
It follows from Proposition \ref{specialisspecial} that
$\Def(Q,\eps)_F$ satisfies Property \ref{property:special0} (2), and
hence  we have that 
$W(\eps)_{(y,z)}^{(0)}$
meets every s-a connected component of 
$V(\eps)_{(y,z)}$.
By  \cite[Lemma 15.6]{BPRbook2posted2}
each s-a connected component of $V_{(y,z)}$ 
is the image under 
$\pi_{[1,k]}\circ \lim_\eps$ 
of a unique
s-a connected component of 
$V(\eps)_{(y,z)}$.
It follows that 
each s-a connected component of 
$V_{(y,z)}$ meets
$$\pi_{[1,k]}\circ\lim_\eps(W(\eps)_{(y,z)}^{(0)}) \subset \mathcal{M}_{z},$$ 
since
$$W(\eps)_{(y,z)}^{(0)}=(W(\eps)_{y}^{(p)})_{z}.$$

Finally, applying the induction hypothesis to 
$\BGRM(V_{(y,z)},\mathcal{M}_{z} \cup (\mathcal{M}_{0})_z)$
we have
that the intersection of 
$\BGRM(V_{(y,z)},\mathcal{M}_{z} \cup (\mathcal{M}_{0})_z)$
with each 
s-a connected component of 
$V_{(y,z)}$ is non-empty and s-a connected,
and meets $\mathcal{M}_{z}$.
Thus, there exists a s-a path 
in
$\BGRM(V_{(y,z)},\mathcal{M}_{z} \cup (\mathcal{M}_{0})_z)$
joining $x$ to a point in 
$\mathcal{M}_{z}$.

Since
$\mathcal{M}_{z} \subset \mathcal{R}$
 we can assume that 
$x$ (and similarly $x'$) is contained in $\mathcal{R}$.

\noindent {\bf Connectivity when $x$ and $x'$ are contained in $\mathcal{R}$.}

Since 
$$
\mathcal{R}=(\pi_{[1,k]} \circ \lim_\eps)
(\RM(W(\eps)_{y,}^{(p)},\mathcal{A}(\eps))),
$$ 
 there exists 
$x(\eps) \in \RM(W(\eps)_{y}^{(p)},\mathcal{A}(\eps))$
 (resp.  $x'(\eps) \in \RM(W(\eps)_{y}^{(p)},\mathcal{A}(\eps))$)
such that $\lim_\eps(x(\eps)) = x$ (resp. $\lim_\eps(x'(\eps)) = x'$). 

Let 
\[
\mathcal{S}(\eps) = W(\eps)_{y}^{(p)} \cup 
(V(\eps) _{y})_{\mathcal{N}'(\eps)},
\]
and
 $C(\eps)$ the unique s-a 
connected component of 
$V(\eps)_{y}$
such that 
$(\pi_{[1,k]} \circ \lim_\eps) (C(\eps)) = C$. 

By Corollary \ref{cor:connectivity}, since
$\mathcal{N}(\eps) \cup \mathcal{N}(\eps)^{(p)}\subset \mathcal{N}'(\eps)$, 
$\mathcal{S}(\eps) \cap C(\eps) $ 
is s-a connected. So there exists a 
s-a path 
$\gamma(\eps) : [0,1] \rightarrow \mathcal{S}(\eps)  \cap C(\eps) $, with
$\gamma(\eps) (0) = x(\eps) , \gamma(\eps) (1) = x'(\eps) $.
Moreover, there exists a partition of $(0,1) \subset \R\langle\eps\rangle$
into a finite number of open intervals and points, such that 
for every open interval $I$ in the partition
one of the following holds :
\begin{itemize}
\item[Case 1:]
\[
\gamma(\eps) (I) \subset W(\eps)_{y}^{(p)}.
\] 
\item[Case 2:]
there exists $z(\eps) \in \mathcal{N}'(\eps)$ such that
\[
\gamma(\eps)(I) \subset V(\eps)_{(y,z(\eps))}.
\] 
\end{itemize}

Since $ W(\eps)_{y}^{(p)} \subset V(\eps)_{y}$,
for each
point $a\in (0,1)$ defining the partition
\begin{equation}
\label{reasonforincludingpoints}
\gamma(\eps)(a) \in \mathcal{A}(\eps)\subset \RM(W(\eps)_{y}^{(p)}, 
\mathcal{A}(\eps)).
\end{equation}

Hence, by definition of $\mathcal{M}_{z}$ (see (\ref{toremember}))
\begin{equation}
\label{reasonforincludingpoints2}
(\pi_{[1,k]}\circ \lim_\eps)(\gamma(\eps) (a))\in \mathcal{M}_{z},
\end{equation}
where $\lim_\eps (z(\eps)) =z$.

In Case 1, we can replace $\gamma(\eps)(I)$ by a s-a
path having the same endpoints and whose image is contained in 
$$\RM(W(\eps)_{y}^{(p)}, W_{\mathcal{N}'(\eps)})$$
using 
$\RM_1$ 
as well as (\ref{reasonforincludingpoints}).
Taking the image under $\pi_{[1,k]}\circ \lim_\eps$  of this new path we obtain
a s-a path 
\[
\gamma:\lim_\eps(I) \rightarrow \mathcal{R}.
\]

In Case 2,  
writing $I = (a,b)$,
$(\pi_{[1,k]}\circ \lim_\eps)(\gamma(\eps )(a)), (\pi_{[1,k]}\circ \lim_\eps) (\gamma(\eps)(b))$
both belong to
$$\BGRM(V_{(y,z)},\mathcal{M}_{z} \cup (\mathcal{M}_{0})_z)$$
using  (\ref{reasonforincludingpoints2}).
Using the induction hypothesis for 
$\BGRM(V_{(y,z)},\mathcal{M}_{z} \cup (\mathcal{M}_{0})_z)$
there exists a s-a path 
\[
\gamma:[\lim(\eps)(a),\lim(\eps)(b)] \rightarrow 
\BGRM(V_{(y,z)},\mathcal{M}_{z} \cup (\mathcal{M}_{0})_z).
\]

Finally, we have constructed a s-a path
$
\gamma:[0,1] \rightarrow C' $ 
with $\gamma(0)=x,\gamma(1) = x'$.

This proves that
$\BGRM(V_{(y,z)},\mathcal{M}_{z} \cup (\mathcal{M}_{0})_z) \cap C$
is 
non-empty and 
s-a 
connected proving $\mathrm{RM}_1$. 

\noindent {\bf Proof of  $\mathrm{RM}_2$.}

Let $c \in \R$ such that $V_{(y,c)}$ is not empty, and
let $C$ be a s-a connected component of 
$V_{(y,c)}$.
We prove that
$$\BGRM(V_{(y,z)},\mathcal{M}_{z} \cup (\mathcal{M}_{0})_z) \cap C$$
is not empty.
It follows from \cite[Lemma 15.6 ]{BPRbook2posted2} that 
there exists 
a s-a connected component $C(\eps)$ 
of $V(\eps)_{(y,c)}$ such that 
$$C = (\pi_{[1,k]} \circ \lim_\eps)(C(\eps)).$$ 
Since
$C(\eps)$ is non-empty, let $x(\eps)\in C(\eps)$ and let
$z(\eps )= \pi_{[mp+1,(m+1)p]}(x(\eps))$. It follows from
Proposition \ref{specialisspecial} 
that $(W(\eps)_{y}^{(p)})_{z(\eps)} = W(\eps)^{(0)}_{(y,z(\eps))}$
meets every s-a connected component of $V(\eps)_{(y,z(\eps))}$.
Since $C(\eps)$ contains a s-a
connected component of $V(\eps)_{(y,z(\eps))}$, we have that
$$W(\eps)^{(0)}_{(y,z(\eps))} \cap C(\eps) \neq \emptyset,$$ and thus  
$C(\eps)$ contains a s-a connected
component of $(W(\eps)_{y}^{(p)})_{c}$ (since 
$W(\eps)_{y}^{(p)} \subset V(\eps)_{y}$).
Now,  since the roadmap 
$$
\RM(W(\eps)_{y}^{(p)},\mathcal{A}(\eps))
$$
satisfies
$\mathrm{RM}_2$,
$\RM(W(\eps)_{y}^{(p)},\mathcal{A}(\eps))$ has a non-empty intersection
with every s-a connected component of  
$(W(\eps)_{y}^{(p)})_{c}$, and in particular with the
one contained in $C(\eps)$.
Taking the image under the map $(\pi_{[1,k]}\circ \lim_\eps)$, we get that
$\mathcal{R} = (\pi_{[1,k]}\circ \lim_\eps) (\RM(W(\eps)_{y}^{(p)},\mathcal{A}(\eps)))$
has a non-empty intersection with $(\pi_{[1,k]}\circ \lim_\eps) (C(\eps)) = C$.
Since
$\BGRM(V_{(y,z)},\mathcal{M}_{z} \cup (\mathcal{M}_{0})_z)_{c}$
contains
$\mathcal{R}$, this finishes the proof. 
\eop

\vspace{.1in}
\noindent
{\sc Complexity analysis.}
We first bound 
the number of arithmetic operations in Step 1.
Since we assume that the degrees of the polynomials in $\mathcal{F},F$ are
bounded by $d^{O(k)}$, it follows from the complexity
analysis
of  \cite[Algorithm 15.3 (Bounded Algebraic Roadmap)]{BPRbook2}, 
and  \cite[Algorithm 15.2 (Curve Segments)]{BPRbook2posted2}, that the
number of arithmetic operations in this step is bounded by 
\[
d^{O(k m)} d^{O((k - m p)^2)} = d^{O(km+ p^2)}
\]
since   $k - mp \le p$.

The number of arithmetic operations in $\D[\eps]$ in 
Step 2  is bounded by $d^{O(m k)}$ and the degree and number of univariate
representations produced is bounded by $O(d)^{k-mp}$.
Moreover the degree in $\eps$ is bounded by $O(d)^{k}$.
To see this one has to observe that the arithmetic operations in  
$\D[\eps]$ in the call to Algorithm \ref{alg:curvesegments} (Curve Segments) 
coincide with those performed in   
\cite[Algorithm 15.10 (Parametrized Curve Segments)]{BPRbook2} 
with $\eps$ treated as a parameter. 
It follows from the complexity analysis of 
\cite[Algorithm 15.10 (Parametrized Curve Segments)]{BPRbook2} that the 
degree in $\eps$ is bounded by $O(d)^{k}$.
So the number of arithmetic operations in $\D$
in Step 2  is bounded by $d^{O(m k)}$.

The complexity of computing  
$\mathcal{R}_{y}$ in 
Step 3 is bounded by 
$d^{O((m+p)k)}$  
given that the number of arithmetic operations  of Algorithm \ref{alg:lowgeneral} (Limit of Roadmaps of Special Low Dimensional Varieties)
 is $d^{O((m+p) k)}$.

The total number of recursive calls at depth $i$ is 
 $d^{O(ki)}$,
and for each such call the number of arithmetic operations in $\D$ 
in Steps 1, 2, and 3 is bounded by 
$d^{O((m+i+p)k + p^2 }$, 
where 
$0 \leq i \leq \lfloor{k/p}\rfloor-m$. 
Since the depth of the recursion is at 
most $\lfloor{k/p}\rfloor-m $, 
we conclude that the total number of 
arithmetic operations in the domain $\D$
is bounded by
$$d^{O(k^2/p)}d^{O((k/p+p)k + p^2)} = d^{O( k^2/p + pk)}.$$
\eop

We now describe Algorithm \ref{alg:babygiant} (Baby-giant Roadmap for
General Algebraic Sets) for computing a roadmap of a general (i.e. not
necessarily bounded algebraic set). This algorithm is essentially the
same algorithm as \cite[Algorithm 15.5 (Algebraic Roadmap)]{BPRbook2},
except that we call Algorithm \ref{alg:babygiant_bounded} (Baby-giant
Roadmap for Bounded Algebraic Sets) after reducing to the bounded case
instead of \cite[Algorithm 15.3 (Bounded Algebraic
Roadmap)]{BPRbook2}.  We first need a notation.  Let $P \in \R[X]$ be
given by
$$P=a_pX^p + \cdots+ a_qX^q, \quad p > q,\quad 
a_qa_p \neq 0.$$ 

\begin{notation}
\label{10:def:bigcauchy}
We 
write
$$
\displaylines{
 c(P)=\left(
\sum_{q\leq i\leq p} \left\vert \frac{a_i}{a_q}
\right\vert  \right)^{-1}.
}
$$
\end{notation}

\begin{algorithm} {\bf [Baby-giant Roadmap for General
Algebraic Sets]}
\label{alg:babygiant}
\begin{itemize}
\item[{\sc Input.}]
\begin{enumerate}
\item a polynomial $Q\in\D[X_1,\ldots,X_k]$;
\item a finite set of points  
$\mathcal{M}_{0}$ 
in 
$\ZZ(Q,\R^k)$, represented by real univariate representations $\mathcal{U}_{0}$.
\end{enumerate}
\item[{\sc Output.}] a roadmap, 
$\BGRM(\ZZ(Q,\R^k),\mathcal{M}_{0})$,
for $(\ZZ(Q,\R^k),\mathcal{M}_{0})$.
\item[{\sc Complexity.}] $d^{O( k^2/p + pk)}$ operations in $\D$.
\item[{\sc Procedure.}]
\item[Step 1.]
Introduce new variables
$X_{k+1}$ and
$\eps$ and replace $Q$ by the polynomial
$$Q(\eps)=Q^2+(\eps^2(X_1^2 +\cdots +X_{k+1}^2)-1)^2.$$
Replace $\mathcal{M}_{0} \subset
\R^k$ by the set of real univariate representations
representing the elements of
$\ZZ(\eps^2(X_1^2 +\cdots +X_{k+1}^2)-1,\R\la \eps \ra^{k+1})$
above the points 
$\mathcal{M}_{0}$
using 
\cite[Algorithm 12.18 (Parametrized Bounded Algebraic Sampling)]{BPRbook2}.

\item[Step 2.]
Call
Algorithm \ref{alg:babygiant_bounded} (Baby-giant Roadmap for
Bounded Algebraic Sets)
with input $Q(\eps)$, $\mathcal{M}_{0}$, $m=0$,  
performing arithmetic operations in the domain $\D[\eps]$.
The algorithm
outputs a roadmap 
$$\BGRM(\ZZ(Q(\eps),\R\la \eps \ra^{k+1} ),\mathcal{M}_{0})$$ composed of
points and curves whose description involves $\eps$.

\item[Step 3.]
Denote by $\mathcal{L}$  the set of polynomials in
 $\D[\eps]$ whose signs have been determined
in the preceding computation
and take
$$a = \min_{P \in \mathcal{L}}c(P)$$
(Notation  \ref{10:def:bigcauchy}).
Replace $\eps$ by
$a$ in the polynomial $Q(\eps)$
to get a polynomial $Q_a$. Replace $\eps$
by
$a$ in the output roadmap to obtain a roadmap
$\BGRM(\ZZ(Q_a,\R^{k+1} ),\mathcal{M}_{0}).$
When projected to
$\R^k$, this roadmap gives a roadmap 
$$\BGRM(\ZZ(Q,\R^k),\mathcal{M}_{0})\cap \mathcal{B}_k(0,1/a).$$

\item[Step 4.]
In order to extend the roadmap outside the ball $B(0,1/a)$
collect all the points $(y_1,\ldots,y_k,y_{k+1}) \in \R\la \eps \ra^{k+1}$
in the roadmap 
\[
\BGRM(\ZZ(Q(\eps),
\R\la \eps \ra^{k+1} ),\mathcal{M}_{0})
\]
which satisfy  $\eps(y_1^2 + \ldots + y_k^2) =1$.
Each such point is described by a  real univariate
representation involving $\eps$. Add
to the roadmap the
curve segment obtained by first
forgetting the last coordinate and
then treating $\eps$ as a parameter which varies
over $(0,a]$ to get a roadmap 
$\BGRM(\ZZ(Q,\R^k),\mathcal{M}_{0})$.
\end{itemize}
\end{algorithm}

\vspace{.1in}
\noindent
{\sc Proof of correctness.} 
The proof of correctness follows  from the proof of correctness of
Algorithm \ref{alg:babygiant_bounded} (Baby-giant Roadmap for
Bounded Algebraic Sets).
\eop

\vspace{.1in}
\noindent
{\sc Complexity analysis.}
The complexity is dominated by the complexity of Step 2.
\eop

\begin{proof}[Proofs of Theorem \ref{the:babygiant} and 
Corollary \ref{cor:babygiant}]
Follows directly from the correctness and complexity analysis
of Algorithm \ref{alg:babygiant} (Baby-giant Roadmap for
General Algebraic Sets),  after substituting
$m=0$ and $p = \sqrt{k}$.
\end{proof}

\section{Appendix: computing the limit of bounded points and curve segments}
\label{sec:details}

\subsection{Limit of a bounded point}

Before computing the limit of a bounded point we need to
explain how to perform some useful computations modulo a quasi-monic triangular Thom encoding $\mathcal{F},\sigma$
representing a point $t\in \R^m$.

We associate to $t\in \R^m$ specified by a triangular Thom encoding $\mathcal{F},\sigma$,
$$\mathcal{F}=(f_{[1]},\ldots,f_{[m]}), f_{[i]}\in \D[T_1,\ldots,T_i],$$ 
the ordered domain $\D[t]$ contained in $\R$ and generated by $t$.

We now aim at describing
 the pseudo-inversion of a non-zero element in the domain $\D[t]$ specified by  $\mathcal{F},\sigma$.

\begin{definition}
 A {\rm pseudo-inverse} of $f\in \D[t]$ is an element $g\in \D[t]$ such that $f g\in \D$ is strictly positive.
\end{definition}

 This notion is delicate as the computation of the pseudo-inverse
sometimes
requires us to
update  the 
quasi-monic
triangular Thom encoding specifying $t$, in the spirit of dynamical methods in algebra
(see for example \cite{CLR}).
We start with a motivating example.

\begin{example}
 \label{ex:field}
 We consider $t$, specified as the root of $$f(T)=T^4-T^2-2$$ giving signs $(+,+,+,+)$ to the set ${\rm Der}(f)$ of derivatives of $f$. 
 
Consider $T^2+1$. It is easy to see, using for example  \cite[Algorithm 10.13 (Sign Determination Algorithm)]{BPRbook2} applied to $f$ and the list ${\rm Der}(f),T^2+1$, that the sign of $T^2+1$ at $t$ is positive.
In order to compute its pseudo-inverse, we perform  \cite[Algorithm 8.22 (Extended Signed Subresultant)]{BPRbook2}
of $f$ and $T^2+1$. If $f(T)$ and $T^2+1$ were coprime, we would obtain the pseudo-inverse of $T^2+1$ modulo $f(T)$ since the last subresultant would be a non-zero constant in $\D$. But  $f(T)$ and $T^2+1$ are not coprime and their gcd is $T^2+1$. So we divide $f(T)$ by  $T^2+1$, obtain a new polynomial $g(T)=T^2-2$ and check that the root $t$ of $f(T)$ giving signs $(+,+,+,+)$ to the set ${\rm Der}(f)$
coincides with $\sqrt{2}$ which is the root of $T^2-2$ making the derivative $g'(T)=2T$ positive, using again -for example-
 \cite[Algorithm 10.13 (Sign Determination Algorithm)]{BPRbook2}. It is now possible to 
 compute a pseudo-reduction of  $T^2+1$ modulo $g(T)$, which gives $3$.

In other words, during the process of computing the pseudo-inverse of $T^2+1$ we discovered the factor $g(T)$  of $f(T)$ having $t$ as a root and coprime with $T^2+1$. Using this new description of $t$ we have been able to compute a pseudo-inverse of $T^2+1$.
\end{example}

We can now describe the computation of the pseudo-inverse in general.

\begin{newdescription}
 \label{des:fieldop}
Given $t=(t_1,\ldots,t_m) \in \R^m$ specified by the 
quasi-monic triangular Thom encoding $\mathcal{F}=(f_{[1]},\ldots,f_{[m]}),\sigma=(\sigma_1,\ldots,\sigma_m)$, we describe how to compute a pseudo-inverse of a non-zero element of $\D[t]$.

We proceed by induction on the number $m$ of variables of $\mathcal{F}$.

If $m=0$, there is nothing to do since $\D$ is an ordered domain.

If $m\not= 0$, let $t'=(t_1,\ldots,t_{m-1})$ specified by 
$\mathcal{F}'=(f_{[1]},\ldots,f_{[m-1]}),\sigma=(\sigma_1,\ldots,\sigma_{m-1})$.

We consider $f$ as a polynomial in $T_m$ whose coefficients,
which are elements of 
$$\{h\in \D[T_1,\ldots,T_{m-1}]\mid \deg_{T_i}(h)<\deg_{T_i}(f_{[i]}) , i=1,\ldots,m-1\},$$
represent elements of $\D[t']$.

We first decide the sign of $f$ at $t$, which is done by   \cite[Algorithm 12.19 (Triangular Sign Determination Algorithm)]{BPRbook2}.

If $f(t)\not=0$, we try to pseudo-invert $f$ modulo $\mathcal{F}$.
We perform \cite[Algorithm 8.22 (Extended Signed Subresultant)]{BPRbook2}  for $f$ and $f_{[m]}$, with respect to the
variable $T_m$ and compute a $\gcd(f,f_{[m]})\in \D[t']$ (the last non zero subresultant polynomial)
as well as the cofactors $u,v\in \D[t']$
with $uf+vf_{[m]}=\gcd(f,f_{[m]})$. 
\begin{enumerate}
 \item If $\gcd(f,f_{[m]})$ is of degree 0 in $T_m$, $u$
is a quasi-inverse of $f$.
 \item If $\gcd(f,f_{[m]})$ is of degree $>0$ in $T_m$, we have discovered a factor of $f_{[m]}$.
We define $h$ as the quasi-monic polynomial proportional to
 $f_{[m]}/\gcd(f,f_{[m]})$ obtained  by \cite[Algorithm 8.22 (Extended Signed Subresultant)]{BPRbook2}
 (see \cite[Algorithm 10.1 (Gcd and Gcd-free part)]{BPRbook2}).
 We perform   \cite[Algorithm 12.19 (Triangular Sign Determination)]{BPRbook2}
 applied to $f_{[m]}$ and ${\rm Der}(f_{[m]}),{\rm Der}(h)$ to identify the Thom encoding $\tau$ of $t_{[m]}$
as a root of $h$. We replace $f_{[m]}$ by $h$ and $\sigma_{[m]}$ by $\tau$ in $\mathcal{F}$.
Now $f$ and the new $f_{[m]}$, considered as polynomials in $T_{[m]}$ are coprime, and we can invert
$f$ modulo $f_{[m]}$.
\end{enumerate}

\end{newdescription}

\begin{proposition}
\label{prop:compring}
Let $\D$ be an ordered domain contained in a real closed field $\R$, and  
$t=(t_1,\ldots,t_m)\in \R^m$ be specified by a 
quasi-monic
triangular Thom encoding $\mathcal{F},\sigma$,
$$\mathcal{F}=(f_{[1]},\ldots,f_{[m]}), f_{[i]}\in \D[T_1,\ldots,T_i].$$
Let $d$ be a bound of the degree of $f_{[i]}$ with respect to  each $T_j$, $1\leq j\le i, 1\leq i\leq m$.

a) 
If $g\in \D[T_1,\ldots,T_m]$ is a polynomial of degree $D$,
the complexity  of computing 
a pseudo-reduction $(c,\bar g)$ of $g$ modulo $\mathcal{F}$
is $(Dd)^{O(m)}$ arithmetic operations in  $\D$.

b) The complexity of the computation of the pseudo-inverse of an element of
$\D[t]$ is $d^{O(m)}$ arithmetic operations in  $\D$.
\end{proposition}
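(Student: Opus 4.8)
# Proof Proposal for Proposition \ref{prop:compring}

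The plan is to analyze the two pseudo-operations separately, carefully tracking how the degrees of intermediate polynomials grow under the iterated pseudo-remainder operations. For part (a), I would proceed by induction on $m$, the number of variables in the triangular system. The base case $m=0$ is trivial since $\mathrm{PsRed}(f,\mathcal{F})=f$. For the inductive step, observe that $\mathrm{PsRed}(f,\mathcal{F}) = \mathrm{PsRed}(\mathrm{PsRem}_{T_m}(f,f_{[m]}),\mathcal{F}')$ where $\mathcal{F}' = (f_{[1]},\ldots,f_{[m-1]})$. First I would bound the cost and output degree of the single step $\mathrm{PsRem}_{T_m}(f,f_{[m]})$: since $\deg_{T_m}(f_{[m]}) \le d$ and $\deg_{T_m}(f) \le D$, the pseudo-remainder requires $O(D-d)$ pseudo-division steps, each multiplying by the leading coefficient $b_d \in \D[T_1,\ldots,T_{m-1}]$. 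The key bookkeeping is that the resulting polynomial $g = \mathrm{PsRem}_{T_m}(f,f_{[m]})$ has degree in each of $T_1,\ldots,T_{m-1}$ bounded by roughly $D + (D-d)\cdot d$, i.e.\ $O((D-d)d)$ after absorbing constants, while its total degree is also controlled by $O((D-d)d)$; in $T_m$ its degree drops below $d$. The number of arithmetic operations in $\D$ for this single elimination step is the number of coefficient multiplications, which is $O((D-d)d)$ times the size of the coefficient polynomials, but the crucial point for the clean bound is that it contributes one factor of $O((D-d)d)$ per variable eliminated.

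The main obstacle in part (a) is controlling degree growth so that the recursion yields exactly $(O((D-d)d))^m$ rather than a tower of worse bounds. The trick is to note that after the first pseudo-remainder, the polynomial $g$ has $\deg_{T_j}(g) = O((D-d)d)$ for $j < m$, but then when we pseudo-reduce $g$ modulo $f_{[m-1]}$, the relevant degree difference is $\deg_{T_{m-1}}(g) - \deg_{T_{m-1}}(f_{[m-1]}) = O((D-d)d) - 0 = O((D-d)d)$, and one checks that subsequent steps do not blow this up beyond $O((D-d)d)$ in each variable because the degree in a variable $T_j$ stabilizes once we have reduced modulo $f_{[j]}$. Thus each of the $m$ elimination steps costs $O((D-d)d)$ operations (measured appropriately, or with the coefficient sizes folded in so that the product telescopes), giving the total $(O((D-d)d))^m$. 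I would make this precise by setting $E = O((D-d)d)$ and proving by induction the invariant: after eliminating $T_m, T_{m-1}, \ldots, T_{m-i+1}$, the current polynomial has degree at most $E$ in each remaining variable, and the total work so far is $E^i$.

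For part (b), I would invoke Description \ref{des:fieldop}, which reduces the pseudo-inversion of $f \in \D[t]$ to: (i) one call to the Triangular Sign Determination Algorithm \cite[Algorithm 12.19]{BPRbook2} to decide the sign of $f$ at $t$; (ii) one call to the Extended Signed Subresultant Algorithm \cite[Algorithm 8.22]{BPRbook2} for $f$ and $f_{[m]}$ with respect to $T_m$ to compute $\gcd(f,f_{[m]})$ and cofactors $u,v \in \D[t']$; and (iii) in the bad case, another sign-determination call to update the Thom encoding, followed by a recursive inversion in $m-1$ variables. Since every polynomial involved has degree bounded by $d$ in each variable, the signed subresultant computation with respect to $T_m$ costs $d^{O(1)}$ operations in $\D[t']$, and each such operation in $\D[t']$ — being an arithmetic operation modulo the triangular system $\mathcal{F}'$ — costs $d^{O(m)}$ operations in $\D$ by part (a) (pseudo-reduction of a degree-$O(d)$ polynomial modulo a triangular system in $m-1$ variables). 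The sign determination calls have complexity of the same order $d^{O(m)}$ by \cite[Algorithm 12.19]{BPRbook2}. Writing $C(m)$ for the cost in $m$ variables, we get the recurrence $C(m) \le d^{O(m)} + C(m-1)$, whence $C(m) = d^{O(m)}$. The only subtlety, which I would address explicitly, is that the output degrees after pseudo-reduction modulo a triangular system with $\deg_{T_j}(f_{[j]}) \le d$ are bounded by $d$ in each variable (this is where quasi-monicity and pseudo-division keep us inside $\D[t']$ with controlled degrees), so the coefficient arithmetic does not cascade — each of the $m$ levels of recursion pays a clean $d^{O(m)}$, and the product of the constants in the exponents remains linear in $m$.
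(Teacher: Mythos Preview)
Your approach mirrors the paper's: induction on $m$ for both parts, with part (b) relying on Description \ref{des:fieldop} together with the complexity of the Extended Signed Subresultant and Triangular Sign Determination algorithms. For part (a), however, your proposed invariant does not hold: after eliminating $T_m,\ldots,T_{m-i+1}$, the degree in each remaining variable does \emph{not} stay bounded by $E = O((D-d)d)$ --- a second pseudo-remainder applied to a polynomial of degree $E$ in $T_{m-1}$ produces degree roughly $E + (E-d+1)d \approx Ed$ in $T_1,\ldots,T_{m-2}$, and this compounds at each further step. The paper's own inductive step is equally loose on this point (it invokes the hypothesis with $D$ in place of the blown-up degree $D' = D + (D-d+1)d$), so you are not missing an idea the paper supplies; both arguments really deliver the weaker form $(Dd)^{O(m)}$ quoted in Remark \ref{rem:complexityofreduction}. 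For part (b), a minor correction: Description \ref{des:fieldop} contains no explicit recursive pseudo-inversion in $m-1$ variables --- in the bad case one replaces $f_{[m]}$ by its gcd-free part and inverts again at level $m$ --- so the $C(m-1)$ term in your recurrence is extraneous, and the paper's analysis is the purely multiplicative $(d+1)^{c}\cdot(d+1)^{c(m-1)}=(d+1)^{cm}$; either route yields $d^{O(m)}$.
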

\begin{proof}
a)
Suppose that $C_g\in \D$ is such that $C_g T_1^{i_1}\cdots T_m^{i_m} g $ has a reduction in 
$\D$ modulo $\mathcal{F}$
for every $(i_1, \ldots i_m)$
with 
$i_j<{\rm deg}(f_{[j]},T_j)$, $1\leq j\leq m$.
We denote by ${\rm Mat}(C_g g)$ the matrix of multiplication by $C_g g$ modulo 
$(\mathcal{F})$ with respect to monomial bases. 
The entries of ${\rm Mat}(C_g g)$ are in
$\D$. Its rows and columns
are indexed by $(i_1, \ldots, i_m)$,  $i_j<{\rm deg}(f_{[j]},T_j)$, $1\leq j\leq m$
and the 
$(j_1,\ldots,j_m)$-th entry of the column indexed by $(i_1,\ldots,i_m)$ is
the coefficient of
$T_1^{j_1}\cdots T_m^{j_m}$ in
the reduction of
$C_g T_1^{i_1}\cdots T_m^{i_m} g$ modulo $\mathcal{F}$.
Note that ${\rm Mat}(C_g C_h gh)={\rm Mat}(C_g g){\rm Mat}(C_h h)$.
Note also that the entries of the first column of ${\rm Mat}(C_g  g)$ (indexed by 
$(0,\ldots,0)$) 
are the coefficients of
the
reduction of $C_g g$ modulo $\mathcal{F}$.

We first compute $C_{T_j}$ such that 
$C_{T_j} T_1^{i_1}\cdots T_m^{i_m} T_j $ 
has a reduction in $\D$ modulo $\mathcal{F}$
for every $(i_1, \ldots i_m)$,  $i_h<{\rm deg}(f_{[h]},T_h)$, $1\leq
h\leq m$.
The algorithm proceeds by induction on $j$. 

For $j=1$, let $c_1\in \D$ be the leading coefficient of 
$f_{[1]}\in \D[T_1]$,  $d_1={\rm deg}(f_{[1]},T_1)$,
and $C_{T_1}=c_1$.
The matrix ${\rm Mat}(c_1 T_1)$ is simply obtained by replacing each occurence of
$c_1 T_1^{d_1}$ by $f_{[1]}-c_1T_1^{d_1}$ in 
$$c_1  T_1^{d_1-1}  T_{2}^{i_2}\cdots  T_{m}^{i_m} T_1$$
with $i_h<d_h$, $2\le h\le m$ 
and writing the result as a linear combination of the monomials
$ T_{1}^{j_1}\cdots T_{m}^{j_m},$  $j_i<d_i$, $1\leq i \leq  m$.
Compute ${\rm Mat}(C_{T_1}^{h} T_1^{h})={\rm Mat}(c_1 T_1)^{h}$, $h<2d$,
and define $C_1=c_1^{2d-1}$.

Suppose by induction 
that for every monomial $M$ in $T_1,\ldots, T_j$ of degree $<2d$,
 $C_M T_1^{i_1}\cdots T_m^{i_m} M $ has a reduction in $\D$  modulo $\mathcal{F}$
for every $(i_1, \ldots, i_m)$, $i_j<{\rm deg}(f_{[j]},T_j)$, $1\leq j \leq  m$.
Also, suppose that ${\rm Mat}( C_{M} M)$
has been computed. Denote by $C_{j}\in \D$ the product of $C_M$ for all the monomials $M$ 
of degree 
$<2d$ in the $j$ variables
$T_1,\ldots,T_j$.

Let $c_{j+1}\in \D$ be the leading coefficient of $f_{[j+1]}\in \D[T_1,\ldots,T_{j+1}]$ with respect to $T_{j+1}$ and  $d_{j+1}={\rm deg}(f_{[j+1]},T_{j+1})$, and take $C_{T_{j+1}}=c_{j+1} C_{j}$. 
The matrix ${\rm Mat}(C_{T_{j+1}} T_{j+1})$ is obtained by replacing each occurence of
$C_{T_{j+1}} T_{j+1}^{d_{j+1}}$ by $C_{j}f_{[j+1]}-C_{T_{j+1}} T_{j+1}^{d_{j+1}}$ in 
$$
C_{T_{j+1}}  T_{1}^{i_1}\cdots T_{j}^{i_j}T_{j+1}^{d_{j+1}-1} T_{j+2}^{i_{j+2}}\cdots T_{m}^{i_m} T_{j+1},$$
with $i_\ell<d_\ell$.

Notice that the polynomials  obtained this way 
have degrees at most $2d$ in
$T_1,\ldots,T_j$, and degrees $< d_h$ in $T_h$ for $h > j$. Reduce all
such monomials using the 
matrices of multiplication
computed before. 

Finally compute for every monomial $M$ of degree $\le D$ in
$T_1,\ldots,T_m$, $C_M$ and ${\rm Mat}(C_{M} M)$ by taking products of
the $C_{T_i}$ and the matrices ${\rm Mat}(C_{T_i} T_i)$
(respectively), and let $C_g$ be the product of the $C_M$ for all
monomials $M$ of degree $\le D$.  Now determine ${\rm Mat}(C_{g} g)$
by taking an appropriate linear combination of ${\rm Mat}(C_{M} M)$
and thus obtain the reduction of $C_g g$ modulo $\mathcal{F}$.
 
Notice that the complexity of computing the $C_{T_{j+1}}$, and  
${\rm Mat}(C_{T_{j+1}} T_{j+1})$ is bounded by $d^{O(m)}$.
In the last step,
there are $O(D)^m$ monomials of degree at most $D$, and hence at most 
$O(D)^m$ matrix multiplications to perform,
and the sizes of the matrices is
$d_1\cdots d_m \leq d^m$. So the complexity is   $(Dd)^{O(m)}$.

b) The proof  proceeds 
by induction  on the number of variables $m$ of $\mathcal{F}$.

If $m=1$, 
the computation of a gcd
 takes
$(d+1)^{c}$ 
operations in the domain $\D$,
for some universal constant $c  > 0$,
 using the complexity analysis of \cite[Algorithm 8.22 (Extended Signed Subresultant)]{BPRbook2} 
 and
\cite[Algorithm 10.13 (Sign Determination)]{BPRbook2}. 

If $m>1$, let $t=(t',u)$, 
and
we suppose by induction hypothesis that the complexity of arithmetic operations 
including pseudo-inversion
in
$\D[t']$ is $(d+1)^{c(m-1)}$ arithmetic operations in the ordered domain $\D$.
 The claim is clear since the arithmetic operations in the domain $\D[t]$ are using
 $(d+1)^{c}$ operations in the domain $\D[t']$ using the complexity analysis of 
 \cite[Algorithm 8.22 (Extended Signed Subresultant)]{BPRbook2}
 and
\cite[Algorithm 10.13 (Sign Determination)]{BPRbook2}.
\end{proof}

We can now give the description of Algorithm \ref{alg:blocklimits} (Limit of a Bounded Point).
\\

\noindent {\bf Description of Algorithm \ref{alg:blocklimits} (Limit of a Bounded Point)}

The precise input and output of this algorithm appear in Section \ref{subsec:limits}.

\begin{itemize}
\item[{\sc Procedure}]
Remove from 
$g(\eps)(T_1,\ldots,T_m,U)$ 
the coefficients
vanishing at the point 
$(t_1,\ldots,t_m)$, 
using  \cite[Algorithm 12.19 (Triangular Sign Determination)]{BPRbook2}.
Supposing without loss of generality 
that not all the coefficients of 
\[
g(\eps)(t_1,\ldots,t_m,U)
\]
are multiples of $\eps$, 
denote by $g(T_1,\ldots,T_m,U)$ the polynomial obtained by 
substituting $0$ for  $\eps$
in $g(\eps)(T_1,\ldots,T_m,U)$.

Similarly denote by $G(T_1,\ldots,T_m,U)$ the polynomials obtained by substituting $0$ for $\eps$
in $G(\eps)(T_1,\ldots,T_m,U)$.

Compute the set $\Sigma$ of Thom encodings of  roots of $g(t,U)$ using  \cite[Algorithm 12.19 (Triangular Sign Determination)]{BPRbook2}. Denoting by $\mu_\sigma$ the multiplicity of the root of $g(t,U)$ with Thom encoding $\sigma$, define
$G_\sigma$ as the $(\mu_\sigma-1)$-th derivative of $G$ with respect to $U$.

Identify the Thom encoding $\sigma$ and $G_\sigma$ 
representing $z$ using 
 \cite[Algorithm 12.19 (Triangular Sign Determination)]{BPRbook2},
by checking whe\-ther a ball of infinitesimal radius 
$\delta$ ($1\gg \delta\gg \eps>0$) around the point $x$ 
represented by the real univariate representation
$g,\sigma,G_\sigma$ contains 
$z(\eps)$.

Pseudo-invert the leading coefficient of the univariate representation, 
denote by $\mathcal{F}',\sigma'$  the new triangular Thom encoding 
describing $t$ and 
 compute a pseudo-reduction of the output modulo $\mathcal{F}'$.
\end{itemize}

\begin{proof}[Complexity analysis:]
Follows from the complexity of  \cite[Algorithm 12.19 (Triangular Sign Determination)]{BPRbook2}.
\end{proof}

\subsection{Limit of a curve}

Computing the limit of a curve  is not immediate when some part of the curve 
has a vertical limit, as seen in the following example.

\begin{example}
\label{notwell}
Consider the s-a curve 
$\gamma:[0,\eps]\rightarrow \R\langle\eps\rangle^3$, 
parametrized by the $X_1$ coordinate
defined by 
\[
\gamma(x_1) = (x_1, \gamma_2(x_1),\gamma_3(x_1)), x_1 \in [0,\eps]
\]
where $(\gamma_2(x_1),\gamma_3(x_1))$ is the  solution
of the triangular system,
$$
\displaylines{
X_2 - x_1/\eps  = 0, \cr
X_2^2 + X_3^2  - 1 = 0,
}
$$
with Thom encoding $(0,+), (0,+,+)$.

Notice that the image of $\gamma$ is contained in the 
cylinder of unit radius with axis the $X_1$-axis 
and is bounded over $\R$.
The image of $\gamma$ under the $\lim_\eps$ map  is contained in a circle
 in the plane $X_1=0$, and 
can no longer be described as a curve parametrized by the $X_1$-coordinate.

However, it is possible to reparametrize $\gamma$ by the $X_2$-coordinate.
By doing so we obtain another s-a curve 
$\varphi:[0,1] \rightarrow \R\langle\eps\rangle^3$ (having the
same image as $\gamma$) defined by  
\[
\varphi(x_2) = (\varphi_1(x_2), x_2, \varphi_3(x_2)), x_2 \in [0,1]
\]
where $(\varphi_1(x_2), \varphi_3(x_2))$ is the real solution 
of the triangular system 
$$
\displaylines{
 X_1-\eps x_2  = 0, \cr
 X_3^2 + x_2^2  - 1 = 0,
}
$$
with Thom encoding  $(0,-),(0,+,+)$.
Notice that the image under $\lim_\eps$  of the curve which is the graph of $\varphi$
can be easily described
as the curve represented by the following 
triangular system parametrized by $x_2 \in [0,1]$
$$
\displaylines{
X_1 = 0, \cr
X_3^2 + x_2^2  - 1 = 0,
}
$$
and Thom encoding $(0,-1), (0,+,+)$.
\end{example}

This is the reason why some kind of reparametrization is necessary before computing the limit.

\subsubsection{Reparametrization of curve segments}
\label{subsec:reparam}

We  
define the notion of well-parametrized curve, and prove that the limit of a well-parametrized curve is easy to describe.

\begin{definition}
\label{def:well-param}
A 
differentiable s-a curve
\[
\gamma = (\gamma_1,\ldots,\gamma_k) :(a,b)
\rightarrow \R^k
\] 

parametrized by $X_1$ (i.e. $\gamma_1(x_1)=x_1$)
is  {\em well-parametrized}  if for every $x_1 \in (a,b)$,
$$
\displaylines{
\sum_{i=1}^k \left(\frac{\partial \gamma_i}{\partial X_1}\right)^2 \le k.
}
$$

Let $t\in \R^m$ be represented by a triangular Thom encoding $\mathcal{F},\sigma$,
and
$$f_1,\sigma_1, f_2, \sigma_2,g, \tau,G$$
be  a curve segment
with parameter $X_j$ 
over $t$ on $(\alpha_1,\alpha_2)$ where
$\alpha_1$ and $\alpha_2$ are the elements of $\R$ represented by 
the  Thom encodings $f_1, \sigma_1$ and $f_2, \sigma_2$.

The curve segment
$$f_1,\sigma_1, f_2, \sigma_2,g, \tau,G$$
is  {\em well-parametrized} if
the s-a curve 
$\gamma: (\alpha_1,\alpha_2)\rightarrow \R\la\eps\ra^k$ 
defined by 
\[ 
\gamma(x_j) = 
\left(\frac{g_{1} (t,x_j, u(x_j))}{g_0 (t,x_j, u(x_j))},
   \ldots, \frac{g_k (t,x_j, u(x_j))}{g_0 (t,x_j, u(x_j))} \right)
\]
is well-parametrized,
where
$u:(\alpha_1,\alpha_2) \rightarrow \R$ 
maps each $x_j \in (\alpha_1,\alpha_2)$ to the root of $g(t,x_j,U)$ 
with Thom encoding $\tau$.
This means that
$$
\displaylines{
\sum_{i=1}^k\left(\left(\frac{g_i (t,x_j,u(x_j))}{g_0 (t,x_j,u(x_j))}
\right)'\right)^2 \leq k,
}
$$
where the derivative is taken with respect to $x_j$.
\end{definition}

Example \ref{notwell} is not a well-parametrized curve segment.

If a curve segment defined over $\R\la\eps\ra$ is
well-parametrized, and represents a curve bounded over $\R$, then
the image of the curve under the $\lim_\eps$ map can be easily described.
The following proposition
explains why this is true.

\begin{proposition}
\label{prop:limitofwellparametrized}
Let $(a(\eps),b(\eps))\subset \R\la \eps \ra$,
$a(\eps),b(\eps)$ bounded over $\R$, 
$r < j \leq  k$,
$z(\eps)
\in \R\la\eps\ra^{r}$,
and
\[
\gamma(\eps) = 
:(a(\eps),b(\eps))
\rightarrow \{z(\eps)\} \times \R\la\eps\ra^{k - r}
\] 
a s-a 
differentiable curve parametrized by
$X_j$  and  bounded over $\R$.
If $\gamma(\eps)$ is well-parametrized, then:
\begin{enumerate}
\item there exists a continuous extension of $\gamma(\eps)$ to a continuous, 
s-a curve,
\[
\gamma(\eps) = 
:[a(\eps),b(\eps)]
\rightarrow \{z(\eps)\} \times \R\la\eps\ra^{k - r}
\] 
defined over the closed interval $[a(\eps),b(\eps)]$;
\item
for each 
$x \in [\lim_\eps( a(\eps)),\lim_\eps(b(\eps))]$ 
and any 
$x(\eps) \in [a(\eps),b(\eps)]$
with $\lim_\eps (x(\eps)) = x$, 
$\gamma(x):= \lim_\eps (\gamma(\eps)(x)) =  \lim_\eps (\gamma(\eps)(x(\eps)))$;
\item
$\lim_\eps (\gamma(\eps)([a(\eps),b(\eps)])) = \gamma([\lim_\eps (a(\eps)),\lim_\eps (b(\eps)])$,
\end{enumerate}
In other words, the graph of the s-a function $\gamma(-):= \lim_\eps (\gamma(\eps)(-))$ 
is the image under $\lim_\eps$ of the graph of $\gamma(\eps)$.
\end{proposition}

\begin{proof}
  Since $\gamma(\eps)$ is bounded it follows that there exists a
  continuous extension of $\gamma(\eps)$ to the end points of the
  interval $(a(\eps),b(\eps))$.  It also follows from the definition
  of being well-parametrized that $||\gamma(\eps)'(x)|| \leq \sqrt{k}$
  for all $x\in (a(\eps),b(\eps))$.  By the s-a mean value theorem
  \cite[Exercice 3.4]{BPRbook2} we have that for each $x \in
  (a(\eps),b(\eps))\cap\R$ and any $x(\eps) \in (a(\eps),b(\eps))$
  with $\lim_\eps( x(\eps)) = x$,
\[
||\gamma(\eps)(x) - \gamma(\eps)(x(\eps))|| = ||\gamma(\eps)'(w(\eps))|||x-x(\eps)|,
\]
for some $w \in (x,x(\eps))$ (assuming without loss of generality that 
$x < x(\eps)$). Taking the image under $\lim_\eps$ and noticing that
$||\gamma(\eps)'(w(\eps))||$ is bounded over $\R$ by the previous observation, 
we obtain that
$$\lim_\eps (\gamma(\eps)(x)) = \lim_\eps (\gamma(\eps)(x(\eps))),$$ proving (1).
This implies  that the function 
$\gamma:(\lim_\eps (a(\eps)),\lim_\eps (b(\eps))) \rightarrow \R^k$ defined by 
$\gamma(x) = \lim_\eps (\gamma(\eps)(x))$ is a continuous,
bounded (since $\gamma(\eps)$ is boun\-ded over
$\R$)  s-a 
function, and hence
can be extended to a continuous, bounded s-a function 
on the closed interval $[\lim_\eps( a(\eps)), \lim_\eps (b(\eps))]$.
Moreover, it is clear that $\gamma(\lim_\eps (a(\eps))) = \lim_\eps (\gamma(\eps)(a(\eps)))$ and
$\gamma(\lim_\eps (b(\eps))) = \lim_\eps (\gamma(\eps)(b(\eps)))$, since 
{\small$$\gamma(\lim_\eps (a(\eps))),\gamma(\lim_\eps (b(\eps))) \in 
\overline{\gamma((\lim_\eps (a(\eps)),\lim_\eps( b(\eps))))}=\lim_\eps (\gamma(\eps)([a(\eps),b(\eps)])).$$}
It is then clear that (2) follows. 
\end{proof}

A s-a curve  
is  in general not well-parametrized.
However, subdividing if necessary the  curve into several pieces,
 it is possible to choose for each such piece
a parametrizing coordinate 
which makes the piece well-parametrized.
This is what we do in Algorithm \ref{alg:reparam} (Reparametrization of a Curve).

\begin{algorithm} {\bf [Reparametrization of a Curve]}
\label{alg:reparam}
\begin{itemize}
\item[{\sc Input.}] 
\begin{enumerate}
\item $t\in \R^m$ represented by a triangular Thom encoding $\mathcal{F},\sigma$,
\item  a bounded curve $S$ represented by a curve segment,
$$f_{1},\sigma_{1}, f_{2}, \sigma_{2},g, \tau,G$$
with parameter 
$X_1$ 
in $\R^{k}$ over $t$
on $(a,b)$.
\end{enumerate}
All the polynomials in the input
have coefficients in $\D$.

\item[{\sc Output.}] 
\begin{enumerate}
\item
A finite set 
$\mathcal{V} = \{v_1,\ldots,v_{N-1}\}$, 
of real univariate representations
over $(t,c_i)$  where each $c_i$ is 
represented by 
a Thom encoding over $t$ fixing
$X_{m(i)}$.
\item 
A finite set
$\mathcal{W}=\{w_1,\ldots,w_{N}\}$, 
of curve segments  with $w_i$ parametrized by $X_{\ell(i)}$.

Moreover, 
the union of the curves represented by 
$\mathcal{W}$,
and the points represented by 
$\mathcal{V}$
define a partition
of 
$S$.

\end{enumerate}
\item[{\sc Complexity}]
If 
the polynomials occurring in 
the input have degrees bounded by 
$D$, then the complexity of the algorithm is bounded by
$k^{O(1)}D^{O(m)}$.
\end{itemize}
\begin{itemize}
\item[{\sc Procedure.}]
\item[Step 1.] 
Let $g_{1}(X_{1},T) = X_{1} g_0(X_{1},T)$, and for each $i, {1} \leq i \leq k$, let
$$
\displaylines{
F_i :=  \left(\frac{\partial g}{\partial T}\right) 
\left(\frac{\partial g_i}{\partial X_{1}} g_0 - g_i
\frac{\partial g_0}{\partial X_{1}}\right) 
-
\left(\frac{\partial g_i}{\partial T} g_0 - g_i
\frac{\partial g_0}{\partial T}\right) 
\left(\frac{\partial g}{\partial X_{1}}\right) 
}
$$
(which is proportional to the projection on the $i$-th coordinate of the 
tangent vector to the input curve by the chain rule)
and 
$$
\displaylines{
G_i := k F_i^2 - \sum_{j={1}}^k F_j^2.
}
$$
\item[Step 2.]  Computing $\mathrm{RElim}_T(G_i,g)$, ${1}\leq i \leq
  k$, using \cite[Algorithm 11.19 (Restricted Elimination)]{BPRbook2},
  obtain a family $\mathcal{L}$ of polynomials in the ring 
  $\D[T_1,\ldots,T_m,X_1]$.  Subdivide $(a,b)$ in a finite union of
  points and intervals over which the signs of the polynomials in
  $\mathcal{L}$ are fixed using \cite[Algorithm 12.23 (Triangular
  Sampling Points)]{BPRbook2} and get $a=c_1 <\ldots < c_L = b$, where
  each $c_j$ is represented by a Thom encoding $(C_j,\sigma_j)$ over
  $t \in \R^m$, such that for each $j, {1} \leq j \leq L$, there
  exists an $\ell(j), {1}\leq \ell(j) \leq k$, such that for all
  $x_{1} \in (c_{j-1},c_j)$, $G_{\ell(j)}(t,x_{1},u(x_{1})) \geq 0$,
  denoting by $u(x_{1})$ the root of $g(t,x_{1},U)$ with Thom encoding
  $\tau$. For each $j$ fix an $\ell(j)$ satisfying this property.

\item[Step 3.]
For each $j, 1 \leq j\leq L$, reparametrize the segment
of the input curve over the interval $(c_{j-1},c_j)$ using
the coordinate $X_{\ell(j)}$. 
Suppose without loss of generality from here on that $\ell(j)=2$.
 \item[Step 3 a).]
Set 
\[
H := g^2 + (X_{2}\cdot g_0(T,X_{1},U)-g_{2}(T,X_{1},U))^2  \in \D[T,X_1,X_{2},U].
\]
Note that $\ZZ(H(t,-),\R^3)$ is a curve bounded over $\R$ (by
assumption on the input).  Call Algorithm \ref{alg:curvesegments}
(Curve Segments) with input the polynomial $H$, and the triangular
system $\mathcal{F},\sigma$, noticing that $X_{2}$ is now the
parameter.  Let $\mathcal{D}_i$ (respectively, $\mathcal{C}_i$) be the
set of distinguished points (respectively, curves) output by Algorithm
\ref{alg:curvesegments}.

\item[Step 3 b).]
For each element 
\[
(h(T,X_{2},V),\sigma_h ,H(T,X_{2},V))
\in \mathcal{D}_i,
\]
where
$$H(T,X_{2},V)=
(h_0(T,X_{2},V),h_1(T,X_{2},V),h_2(T,X_{2},V)),
$$
use
\cite[Algorithm 12.19 (Triangular Sign Determination)]{BPRbook2}
to check if the point $(x_{1},x_{2},u)$ represented by 
$(h,\sigma_h, (h_0,h_1,X_2 h_0,h_2))$ over
$t$,
coincides with 
$$(x_{1},\frac{g_{2}(t,x_{1},u(x_{1}))}{g_0(t,x_{1},u(x_{1}))},u(x_{1})).$$

Retain only the element
\[(h(T,X_{2},V),\sigma_h, H(T,X_{2},V))
\in \mathcal{D}_i
\] 
for which this is the case, and add to the set
$\mathcal{V}$ the real univariate representation 
$u = (h,\sigma_h, G_H)$ 
(see Notation \ref{not:subst2})
representing a point $v_h \in \R^k$, 
with parameter $X_{2}$ over $t$.

\item[Step 3 c).]
For each element 
\[
(f_{1}(T,V),\sigma_{1}, f_{2}(T,V),\sigma_{2},h(T,X_{2},V), \sigma_h, H(T,X_{2},V))
\in \mathcal{C}_i,
\]
where
$$H(T,X_{2},V)=(
h_0(T,X_{2},V),h_1(T,X_{2},V),h_2(T,X_{2},V)),
$$
use
 \cite[Algorithm 12.19 (Triangular Sign Determination)]{BPRbook2}
to check if the point $(x_{1},x_{2},u)$ represented by 
\[
h(T,X_{2},V),\sigma_h, (h_0,h_1,X_2 h_0,h_2)
\]
over $t$, coincides
with $$(x_{1},\frac{g_{2}(t,x_{1},u(x_{1}))}{g_0(t,x_{1},u(x_{1}))},u(x_{1}))$$
for $x_{2} = (v_{1}+v_{2})/2$ where $v_1,v_2$ are represented by
$(f_1,\sigma_1)$ and $(f_2,\sigma_2)$ respectively.  Retain only the
element of $\mathcal{C}_i$ for which this is the case, and add to the
set $\mathcal{W}$ the curve segment $$(f_1,\sigma_1,
f_2,\sigma_{2},h,\sigma_h,G_H)$$ with parameter $X_{2}$ over $t$ (see
Notation \ref{not:subst2}).

\end{itemize}
\end{algorithm}

\vspace{.1in}
\noindent
{\sc Proof of correctness.} 
Let 
\[
(f_1,\sigma_1, f_2, \sigma_2,g, \tau,G)
\]
be a curve segment parametrized by $X_1$ 
over $t$
representing the curve $\gamma: (a,b) \rightarrow \R^k$.

Let $(c,d)$ be a sub-interval of $(a,b)$ such that
for every $x_1\in (a,b)$
\begin{equation}
\label{eqn:G}
G_\ell(t,x_1,u(x_1)) = k F_\ell^2(t,x_1,u(x_1)) - 
\sum_{j=1}^{k} F_j^2(t,x_1,u(x_1)) \geq 0.
\end{equation}
(using the notation of Step 1 and Step 2).

This implies
$$
\displaylines{
\left\vert \frac{\partial \gamma_\ell}{\partial X_1} \right\vert 
\ge \frac{1}{\sqrt{k}},
}
$$
and hence
the mapping $\gamma_\ell$ from $(c,d)$ to $(c',d')$ 
with $c'=\gamma_\ell(c), d'=\gamma_\ell(d)$ is invertible.
Defining $\bar \gamma (x_\ell)=\gamma(\gamma_\ell^{-1}(x_\ell))$, $\bar \gamma( (c',d'))=\gamma((c,d))$ is well-parametrized
by $X_\ell$.

Moreover, at each point $x_1 \in (a,b)$ such a choice of $\ell$ exists,
since there must exist an  $\ell, 1 \leq \ell \leq k$ such that
$\displaystyle{
\left(\frac{\partial \gamma_\ell}{\partial X_1}\right)^2
}
$ 
is at least the average value
$\displaystyle{
\frac{1}{k}\sum_{i=1}^k \left(\frac{\partial \gamma_i}{\partial X_1}\right)^2
}
$.
Notice also that for such a choice of $\ell$ we have by the chain rule,
\begin{equation}
\label{eqn:well-parametrized}
\sum_{i=1}^k \left(\frac{\partial \gamma_i}{\partial X_\ell}\right)^2 =
\frac{\sum_{i=1}^k \left(\frac{\partial \gamma_i}{\partial X_1}\right)^2}
{\left(\frac{\partial \gamma_\ell}{\partial X_1}\right)^2}\leq k.
\end{equation}
In Step 2 of the algorithm we obtain a partition of the interval $(a,b)$ 
into points and open intervals, such
that over each sub-interval $(c_{j-1},c_j)$ of the partition, there exists 
an index $\ell = \ell(j)$ 
such that (\ref{eqn:G})  is satisfied at each point $v \in (c_{j-1},c_j)$,
and the curve segment over this interval is well-parametrized by $X_\ell$ by
(\ref{eqn:well-parametrized}).

Each curve segment corresponding
to elements of $\mathcal{V}$ output by the algorithm is thus well-parametrized.
The remaining property of the output is a  consequence of the
correctness of Algorithm \ref{alg:curvesegments} (Curve Segments), and
 \cite[Algorithm 12.19 (Triangular Sign Determination)]{BPRbook2}.
\eop

\vspace{.1in}
\noindent
{\sc Complexity analysis.}
Let $D$ be a bound on the degrees of the polynomials 
in the input.
The complexity of Steps 1 and 2 is bounded by 
$ k^{O(1)} D^{O(m)}$ 
from
the complexity of  
\cite[Algorithm 11.19 (Restricted Elimination)]{BPRbook2}, and 
\cite[Algorithm 12.23 (Triangular Sample Points)]{BPRbook2},
noting that the number of polynomials in $\mathcal{L}$ is bounded
by $k^{O(1)} D^{O(m)}$.

In Steps 3-4
the Algorithm \ref{alg:curvesegments} (Curve Segments)
and 
 \cite[Algorithm 12.19 (Triangular Sign Determination)]{BPRbook2}
are both called with a constant number of variables in the input.
Using the complexity analysis of these algorithms, the total complexity
is bounded by 
$k^{O(1)} D^{O(m)}$.
\eop

\subsubsection{Limit of a curve}
\label{subsec:limitofacurve}

We are now ready to describe
Algorithm \ref{alg:limitofacurve} (Limit of 
a Curve). \\

\noindent {\bf Description of Algorithm \ref{alg:limitofacurve} (Limit of 
a Curve)}

The algorithm proceeds by reparametrizing the curve and 
computing the limit of the well-parametrized curve segments so obtained,
as explained below.
Its precise input and output appear in Section \ref{subsec:limits}.

\begin{itemize}
\item[{\sc Procedure}] 
\item[Step 1.]  
Let $T=(T_1,\ldots,T_m)$, $X'=(X_{1},\ldots,X_{r})$. 
Call a slight variant of
\cite[Algorithm 12.18 (Parametrized Bounded Algebraic Sampling)]{BPRbook2},
 computing  pseudo-reductions of the 
 intermediate computations modulo $\mathcal{F}$ 
(using Proposition \ref{prop:compring}), 
with input 
$$
\sum_{A\in \mathcal{H}(\eps)}A^2\in \D[\eps,T,X']
$$
and parameters 
$\eps,T$, and output the set $\mathcal{U}_{\eps}$ of parametrized univariate representations 
 with variable $U$.

For every $(h(\eps),H(\eps)) \in \mathcal{U}(\eps)$,
use \cite[Algorithm 12.20 (Triangular Thom Encoding)]{BPRbook2}
with input the triangular system $(\mathcal{F},h(\eps))$ to compute
the Thom encodings of
the real roots of $h(\eps)(y,U)$.
If 
 $$\mathcal{H}(\eps)=(h_{[1]},\ldots,h_{[r]})$$ with
  $h_{[i]}\in \D[T,X_1,\ldots,X_i]$
substitute the variables $X'$ in  
$$\bigcup_{0,\ldots,r} {\rm Der}_{X_i}(h_{[i]})$$ 
using $H(\eps)$ 
by (Notation \ref{not:subst}) and define a family
$\mathcal{A}$ of polynomials in $\eps,T,U$. 
Using \cite[Algorithm 12. (Triangular Sign Determination)]{BPRbook2}, compute the signs of the polynomials
of $\mathcal{A}$ at the roots of $h(\eps)(y,U)$.
Comparing the Thom encodings, identify a specific
$(h(\eps),\tau(\eps),H(\eps))$ representing $z(\eps)$ over $t$.

 Then apply 
Algorithm \ref{alg:blocklimits}
(Limit of a Bounded Point)
with  input 
\[
(h(\eps),\tau(\eps),H(\eps))
\] representing $z(\eps)$ over $t$
to obtain 
a
real univariate representation $p_z,\rho_z,P_z$
representing
$z$
over $t$.
 
\item[Step 2.] Using Algorithm \ref{alg:reparam} (Reparametrization of a Curve)
reparametrize the input curve segment. 

\item[Step 3.] 
For every well-parametrized curve segment $S(\eps)$ computed in Step 2, and represented by
$$
 (f(\eps)_{1},\sigma(\eps)_{1},f(\eps)_{2},\sigma(\eps)_{2},g(\eps),\tau(\eps),G(\eps)),
$$
do the following. 

First reorder the
variables to ensure that the parameter of $S(\eps)$  is $X_{r+1}$.

Then compute a description of $\lim_\eps(S(\eps))$. 
This process is going to generate  a finite list of open intervals and points above which the 
representation of the restriction of the curve $\lim_\eps(S(\eps))$ 
by a curve segment
is fixed.
This is done as follows.

\item[Step 3 a).] 
Denote by $\alpha(\eps)_{1}$ the element of 
$\R\la \eps\ra$  represented by 
$$f(\eps)_{1}(T,X',X_{r+1}),\sigma(\eps)_{1}$$
over $(t,z(\eps))$.

Call a slight variant of
\cite[Algorithm 12.18 (Parametrized Bounded Algebraic Sampling)]{BPRbook2},
computing  pseudo-reduction of the intermediate computations modulo $\mathcal{F}$ 
of the output modulo $\mathcal{F}$
(using Proposition \ref{prop:compring}), 
with input 
$$
\sum_{A\in \mathcal{H}(\eps)}A^2+f(\eps)_{1}(T,X',X_{r+1})^2 \in \D[\eps,T,X',X_{r+1}]
$$
and parameters 
$\eps,T$,
and output a set $\mathcal{U}'_{\eps}$ of parametrized univariate representations 
 with variable $U$.

For every $(h(\eps),H(\eps)) \in \mathcal{U}'_{\eps}$,
use \cite[Algorithm 12.20 (Triangular Thom Encoding)]{BPRbook2}
with input the triangular system $(\mathcal{F},h(\eps))$ to compute
the Thom encodings of
the real roots of $h(\eps)(y,U)$.

If 
 $$\mathcal{H}(\eps)=(h_{[1]},\ldots,h_{[r]})$$ with
  $h_{[i]}\in \D[T,X_1,\ldots,X_i,]$
substitute the variables $X',X_{r+1}$ in $${\rm Der}_{X_{r+1}}(f(\eps)_{1}(T,X',X_{r+1}),X_{r+1}\cup \bigcup_{1,\ldots,r} {\rm Der}_{X_i}(h_{[i]})$$ 
using  Notation \ref{not:subst} and define a family
$\mathcal{B}$ of polynomials in $\eps,T,U$. 
Using \cite[Algorithm 12. (Triangular Sign Determination)]{BPRbook2}, compute the signs of the polynomials
of $\mathcal{B}$ at the roots of $h(\eps)(y,U)$.
Comparing the Thom encodings, identify a specific
$(h(\eps),\tau(\eps),H(\eps))$ representing $(z(\eps),\alpha(\eps)_{1})$ over $t$.

 Then apply 
Algorithm \ref{alg:blocklimits}
(Limit of a Bounded Point)
with input 
\[
(h(\eps),\tau(\eps),H(\eps))
\] 
representing $(z(\eps),\alpha(\eps)_{1})$ over $t$
to obtain 
a
quasi-monic
real univariate representation
$p_{z,\alpha_1},\rho_{z,\alpha_1},P_{z,\alpha_1}$
representing
$(z,\alpha_1)$
over $t$ with  $\alpha_1=\lim_\eps(\alpha(\eps)_{1})$.
Obtain 
a Thom encoding
over $t$,
of $\alpha_1$
using 
 \cite[Algorithm 15.1 (Projection)]{BPRbook2}.

\noindent Similarly, for $\alpha(\eps)_{2}$ 
the element of $\R\la \eps\ra$  represented by 
$$f(\eps)_{2}(T,X',X_{r+1}),\sigma(\eps)_{2}$$
over $(t,z(\eps))$, compute a Thom encoding over $t$, of
$\alpha_2=\lim_\eps(\alpha(\eps)_{2})$.

\item[Step 3 b).] 
Perform a slight variant of
\cite[Algorithm 12.18 (Parametrized Boun\-ded Algebraic Sampling)]{BPRbook2}, 
computing pseudo-reductions of intermediate computations modulo $\mathcal{F}$ 
of the output modulo $\mathcal{F}$
(using Proposition \ref{prop:compring}), 
with input 
$$
\sum_{A\in \mathcal{H}(\eps)}A^2+g(\eps)(T,X',X_{r+1},V)^2 \in \D[\eps,T,X',X_{r+1},U]
$$
with parameters $\eps,T,X_{r+1}$ and output a set $\mathcal{V}(\eps)$
of paramet\-rized univariate representations with parameter
$\eps,T,X_{r+1}$ and variable $V$.  Denote by $\mathbf{\Theta}(\eps)$
the set of polynomials $\theta(\eps)$ such that there exis\-ts
$\Theta(\eps)$ with $(\theta(\eps),\Theta(\eps))\in
\mathcal{V}(\eps)$.  Note that $\theta(\eps) \in
\D[\eps,T,X_{r+1},V]$.

\item[Step 3 c).]  Compute the family of coefficients
  $\mathcal{C}\subset \D[T,X_{r+1}]$ of the polynomials $\theta(\eps)
  \in \mathbf{\Theta}(\eps)$ considered as elements of
  $\D[T,X_{r+1}][\eps,V]$ and the list $\mathcal{L} \subset \{=0,\neq
  0\}^{\mathcal{C}}$ of non-empty conditions
  $=0,\not= 0$ satisfied by $\mathcal{C}$ in $\R$ using
  \cite[Algorithm 12.23 (Triangular Sample Points)]{BPRbook2}.  Note
  that for every $x_{r+1}$ in the realization of $\tau\in
  \mathcal{L}$, the orders in $\eps$ of the coefficients of the
  polynomials in $\mathbf{\Theta}(\eps)(t,x_{r+1})\subset \D[\eps,V]$
  are fixed.  For every $\theta(\eps) \in \mathbf{\Theta}(\eps) $ we
  denote by $o(\theta(\eps),\tau)$ the minimal order in $\eps$ of the
  coefficients of $\theta(\eps)(t,x_{r+1})$ on the realization of
  $\tau$ and by $\mathbf{\Theta}_\tau \subset \D[T,X_{r+1},V]$ the set
  of polynomials obtained by substituting $0$ for $\eps$ in
  $\eps^{-o(\theta(\eps),\tau)}\theta(\eps)$.

\item[Step 3 d).] 
Define 
$$
\mathbf{\Theta}=\bigcup_{\tau \in \mathcal{L}} \mathbf{\Theta}_\tau \subset \D[T,X_{r+1},V].
$$
Compute 
$$
\mathcal{E}=\mathcal{C} \cup\bigcup_{\theta \in \mathbf{\Theta}} {\rm RElim}_V(\theta,{\rm Der}(\theta))\subset \D[T,X_{r+1}]
$$
using 
 \cite[Algorithm 11.19 (Restricted Elimination)]{BPRbook2},
so that  the 
Thom encodings of the real roots of 
$\theta(t,x_{r+1},V)$ 
are fixed when 
$x_{r+1}$ varies in an open interval  defined by the roots of
the polynomials $\mathcal{E}(t)$.

\item[Step 3 e).] Compute using 
 \cite[Algorithm 12.19 (Triangular Sign Determination)]{BPRbook2}
the Thom encodings of the real roots of the 
polynomials 
in $\mathcal{E}(t)$, and the ordered list $c_1<\cdots< c_{h-1}$ of the roots of 
the polynomials in $\mathcal{E}(t)$ in the interval $(c_0,c_h)$, 
with $c_0=\alpha_1, c_h=\alpha_2$.  Denote by $C_j,\rho_j$ a 
polynomial in $\mathcal{E}(t)$ and a Thom encoding representing $c_j$.

\item[Step 3 f).] 
For every $j$ from 1 to $h-1$, 
and for every 
$\theta\in \mathbf{\Theta}$,
determine using 
 \cite[Algorithm 12.19 (Triangular Sign Determination)]{BPRbook2},
the Thom encoding 
\[
\theta(t,c_j,V),\tau_j
\]
of a root $v_j$
such that $v_j=\lim_\eps(v(\eps))$, 
where $v(\eps)$ is the root of 
$\theta(\eps)(t,c_j,V)$ 
with Thom encoding 
$\tau(\eps)$. 
The 
multiplicity $\mu_j$ of the root $v_j$ is determined by $\tau_j$.

\item[Step 3 g).] 
For every $j$ from 1 to $h$, define $I=(c_{j-1},c_j)$. For every 
$\theta\in \mathbf{\Theta}$ 
determine, using
 \cite[Algorithm 12.19 (Triangular Sign Determination)]{BPRbook2}
the Thom encoding 
$\theta_{I}(t,x_{r+1},V),\tau_{I}$ 
of a root 
$v_{I}(x_{r+1})$, of multiplicity $\mu_I$  
such that for every $x_{r+1}\in I$, 
$v_{I}(x_{r+1})=\lim_\eps(v(\eps))$ 
where $v(\eps)$ is the root of 
$\theta(\eps)(t,x_{r+1},V)$ 
with Thom encoding 
$\tau(\eps)$.
The 
multiplicity $\mu_I$ of the root $v_{I}(x_{r+1})$ is determined by $\tau_{I}$.

\item[Step 3 h).] 
Given 
$(\theta(\eps),\Theta(\eps))$ in $\mathcal{U}(\eps)$ denote by $(g_{\Theta(\eps)},G_{\Theta(\eps)})$ 
the $k-r+1$-tuple of polynomials obtained by substituting in  
$(g(\eps),G(\eps))$  the variables $X',U$ by $F(\eps)$  (see Notation  \ref{not:subst}). 
Denote by 
$\mathcal{V}'(\eps)\subset \D[\eps,T,X_j,V]$ the set of 
$k-r+1$-tuples of polynomials 
$(g_{\Theta(\eps)},G_{\Theta(\eps)})$.

\item[Step 3 i).]  For every $j$ from 1 to $h-1$ and every
  $(h(\eps),H(\eps))\in \mathcal{V}'(\eps)$, with
  $H(\eps)=(h(\eps)_{0},h(\eps)_{r+2},\ldots, h(\eps)_{k})$ determine
  the order in $\eps$
  of $$h_{\eps}(t,c_j,v_j),h(\eps)_{i}(t,c_j,v_j).$$ This is done by
  determining the signs of the coefficient $h_{\ell},h_{i,\ell}$ of
  $\eps^\ell$ in $h(t,c_j,v_j),h_i(t,c_j,v_j)$ using \cite[Algorithm
  12.19 (Triangular Sign Determination)]{BPRbook2}.  Retain those
  $(h(\eps),H(\eps))$ such that $o(h(\eps)_{0})\leq o(h(\eps)_{i})$
  for all $i$ from $r+2$ to $k$ and replace $\eps$ by $0$
  in $$(\eps^{-o(h_{\eps})}h(\eps),\eps^{-o(h(\eps)_{0})}H(\eps)),$$
  which defines a set $\mathcal{H}_j$.  Inspecting every $(h,H)\in
  \mathcal{H}_j$, determine, using \cite[Algorithm 12.19 (Triangular
  Sign Determination)]{BPRbook2}, a $k-r+1$-tuple $(h_j,H_j)$ with the
  following property.  Let $d_j$ be the point represented by the real
  univariate representation
\[
(h_j(T,X_{r+1},V),\tau_j,H_j^{(\mu_j-1)}(T,X_{r+1},V))
\]
over $t,u$.
The image under $\lim_\eps$ of the 
point of $S(\eps)$ 
with $X_{r+1}$-coordinate $(c_j)$
is $(z,c_j,d_j)$.

\item[Step 3 j).] 
For every $j$ from 1 to $h$  define $I=(c_{j-1},c_j)$. For every $(h(\eps),H(\eps))\in 
\mathcal{V}'(\eps)$, with $H(\eps)=(h(\eps)_{0},h(\eps)_{r+2},\ldots, h(\eps)_{k})$
subdivide 
$I$ 
so that  the order in $\eps$ of 
$h(\eps)(t,c_j,v_j)$ and
$h_i(\eps)(t,x_{r+1},v_{I}(x_{r+1}))$ is fixed. This is done by computing
$$
\mathcal{E}_{I}=
\bigcup_{\theta \in \mathbf{\Theta},(h,H(\eps)) \in \mathcal{V}'(\eps),0 \leq \ell \leq  \deg_\eps h_i}  {\rm RElim}_V(\theta,h_{\ell}) \subset \D[T,X_{r+1}],
$$
and
$$
\mathcal{E}_{I,i}=
\bigcup_{\theta \in \mathbf{\Theta},(h,H(\eps)) \in \mathcal{V}'(\eps),0 \leq \ell \leq \deg_\eps h_i} {\rm RElim}_V(\theta,h_{i,\ell})
\subset \D[T,X_{r+1}],
$$
using 
 \cite[Algorithm 11.19 (Restricted Elimination)]{BPRbook2}.

Defining $$\mathcal{E}'_I=\mathcal{E}_I\cup \bigcup_{i\in{0,r+2,\ldots,k}} \mathcal{E}_{I,i},$$
compute
the Thom encodings of the roots of the polynomials in 
$\mathcal{E}'_{I}(t)$,
using 
 \cite[Algorithm 12.19 (Triangular Sign Determination)]{BPRbook2}.
On each 
open
interval $J$  between 
two successive
roots, 
the order in $\eps$, denoted by $o(h_{\eps}),o(h(\eps)_{i})$ of the polynomials  
$$h(\eps)(t,x_{r+1},v_{J}(x_{r+1})),h_i(\eps)(t,x_{r+1},v_{J}(x_{r+1}))$$ 
remains fixed.

Retain those $(h(\eps),H(\eps))$ such that
$o(h(\eps)_{0})\leq o(h(\eps)_{i})$ 
for all $i$ from $r+2$ to $k$ and replace $\eps$ by $0$
in $\eps^{-o(h(\eps)_{0})}(h,H(\eps))$, which defines a set $\mathcal{H}_J$.
Inspecting every $(h,H)\in \mathcal{H}_J$,
determine, using 
 \cite[Algorithm 12.19 (Triangular Sign Determination)]{BPRbook2},
a $k-r+1$-tuple $(h_J,H_J)$ such that the point represented by
$$(h_J(t,x_{r+1},v_{I}),H_J^{(\mu_J-1)}(t,x_{r+1},v_{I}))$$ is the image under $\lim_\eps$ 
of the point of $S(\eps)$ with $X_{r+1}$-coordi\-nate $x_{r+1}$, where $\mu_J$ is the multiplicity of $u_{J}(x_{r+1})$ as a root of $h_{J}(x_{r+1},V)$.

Let
$w_J$ be the curve represented by the curve segment representation
\[
h_{I}(T,X_{r+1},U),\tau_j,H_J^{(\mu_J-1)}(T,X_{r+1},U)
\]
with parameter $X_{r+1}$ over $t,u$.
 
\item[Step 3 k).]
Let $c_1< \cdots< c_{N-1}$ denote the set of all the elements of $\R$ computed in Steps 
2 d), and 2 i) above, and $c_N=c$.  
Re-index each $v_j$ computed in Step 3 h), such that $d_j$ lies above
$c_j$. Similarly, re-index each $w_I$ computed in Step 3 i) by some $j, 1 \leq j \leq N$, so that $w_j$ lies above the interval $(c_{j-1},c_j)$.

Output the lists consisting of 
$d_1,\ldots,d_{N-1}$, and 
$w_1,\ldots,w_N$. 
\end{itemize}

\vspace{.1in}
\noindent
{\sc Proof of correctness.}
Let $\gamma(\eps): (\alpha(\eps)_{1},\alpha(\eps)_{2}) \rightarrow
\R\la\eps\ra^k$ be the curve represented by a well-parametrized curve segment 
$$
f(\eps)_{1},\sigma(\eps)_{1},f(\eps)_{2},\sigma(\eps)_{2},g(\eps),\tau(\eps),G(\eps)
$$
computed in Step 2.

Let $G:(\alpha_1,\alpha_2) \rightarrow \R^k$ be the curve whose image equals
the image of $\gamma(\eps)$ under $\lim_\eps$. 
Since the input curve segment is well-parametrized
it follows from Proposition \ref{prop:limitofwellparametrized}
that in order to compute for any 
$x_1 \in (c_0,c_N)$, $G(x_1)$ it suffices to compute 
$\lim_\eps \gamma(\eps)(x_1)$.
The proof of correctness of the algorithm is then similar to the
proof of correctness of 
Algorithm \ref{alg:blocklimits} (Limit of a Bounded Point).

\eop

\vspace{.1in}
\noindent
{\sc Complexity analysis.}
Let $D$ be a bound on the degrees of all polynomials appearing in the input.
We first bound the degrees in the various 
variables, $\eps, T,X',X_{r+1},U,V$ of the polynomials computed 
in various steps of the algorithm.
In Step 1, the degrees of the polynomials in $\mathcal{U}(\eps)$ are bounded
as follows. The degrees in $\eps,U$ are bounded by $D^{O(r)}$ by the complexity
analysis 
of \cite[Algorithm 12.18 (Parametrized Bounded Algebraic Sampling)]{BPRbook2} 
and the degrees in the $T_i$ are bounded by $D$, because of the 
pseudo-reduction. Moreover, the complexity of this step is bounded by
$D^{O(m+r)}$ from the complexity of 
\cite[Algorithm 12.18 (Parametrized Bounded Algebraic Sampling)]{BPRbook2}
and the complexity of pseudo-reduction (see Definition \ref{triangthom}).

The degrees in $\eps,T_i,X',U$ in the output of Step 2 are all bounded by 
$D^{O(1)}$ and the complexity of Step 2 is bounded by $$(k-r)^{O(1)} D^{O(m+r)}
= k^{O(1)}D^{O(m+r)}$$
using the complexity analysis of Algorithm
\ref{alg:reparam} (Reparametrization of a Curve).

The degrees of the polynomials in Step 3 a are bounded as follows.
In the output of the call to 
\cite[Algorithm 12.18 (Parametrized Bounded Algebraic Sampling)]{BPRbook2},
the degrees in $\eps,U$ are bounded by $D^{O(r)}$,
and the degrees in the $T_i$ are bounded by $D$.
Now, from the complexity analysis of Algorithm \ref{alg:blocklimits}
(Limit of a Bounded Point) it follows that the degrees in the $T_i$ 
of the polynomials output are  bounded by $D$ and those in $\eps,U$ are
bounded by $D^{O(r)}$. Moreover, the complexity of Step 3 a
is bounded by
$D^{O(m+r)}$ from the complexity of 
\cite[Algorithm 12.18 (Parametrized Bounded Algebraic Sampling)]{BPRbook2},
the complexity of 
Algorithm \ref{alg:blocklimits}
(Limit of a Bounded Point)
and the complexity of pseudo-reduction (see Proposition \ref{prop:compring}).

The degrees of the polynomials in Step 3 b are bounded as follows.
In the output of the call to 
\cite[Algorithm 12.18 (Parametrized Bounded Algebraic Sampling)]{BPRbook2},
the degrees in $\eps,X_{r+1},V$ are bounded by $D^{O(r)}$,
and the degrees in the $T_i$ are bounded by $D$.
The complexity of Step 3 b
is bounded by
$D^{O(m+r)}$ from the complexity of 
\cite[Algorithm 12.18 (Parametrized Bounded Algebraic Sampling)]{BPRbook2},
and the complexity of pseudo-reduction (see Definition \ref{triangthom}).

The complexity of Step 3 c is bounded by $D^{O(m+r)}$ using the degree 
bounds from the complexity analysis of the previous steps and the complexity
of \cite[Algorithm 12.23 (Triangular Sample Points)]{BPRbook2}.

It now follows from the complexity analysis of 
\cite[Algorithm 12.19 (Triangular Sign Determination)]{BPRbook2},
 \cite[Algorithm 11.19 (Restricted Elimination)]{BPRbook2},
and the degree estimates proved above that the complexity of the
remaining steps are all bounded by $k^{O(1)} D^{O(m+r)}$.
Thus, the complexity of the algorithm is bounded by $k^{O(1)} D^{O(m+r)}$.
\eop

\section*{Acknowledgement}
We are very grateful to the ananymous referees of the paper for their numerous suggestions.
We are particularly grateful to one of them for pointing out an error in a preliminary version.

\bibliographystyle{plain}
\bibliography{master}

\end{document}